\documentclass[reqno,a4paper,12pt]{amsart} 
    
    \usepackage{amsmath,amscd,amsfonts,amssymb}
    \usepackage{mathrsfs,dsfont}
    \usepackage{color}
    \usepackage{mathtools}
    \usepackage{hyperref}
    \usepackage{tikz-cd}

    \usepackage{tikz}
    \usetikzlibrary{decorations.pathreplacing}
    
    \numberwithin{equation}{section}
    \numberwithin{figure}{section}
    
    \addtolength{\topmargin}{-1cm}
    \addtolength{\textheight}{2cm}
    \addtolength{\hoffset}{-1cm}
    \addtolength{\textwidth}{2cm}

    \def\R{\mathbb{R}}

    \def\Z{\mathbb{Z}}

    \newcommand\Scal{\mathcal{S}}
    \newcommand\Wcal{\mathcal{W}}

    \def\Lam{\Lambda}

    \renewcommand\le{\leqslant}
    
    \renewcommand\leq{\leqslant}
    \renewcommand\geq{\geqslant}

    \newcommand\ord{\operatorname{ord}}
    \renewcommand{\ae}{{\mathrm{a.e.}}}

    \newcommand{\Tile}{\operatorname{Tile}}

    \newcommand{\Mod}[1]{\ (\mathrm{mod}\ #1)}

    \theoremstyle{plain}
    \newtheorem{thm}{Theorem}[section]
    \newtheorem{theorem}[thm]{Theorem}
    
    \newtheorem{lemma}[thm]{Lemma}
    \newtheorem{corollary}[thm]{Corollary}
    
    \newtheorem{proposition}[thm]{Proposition}
    \newtheorem{problem}[thm]{Problem}
    \newtheorem{question}[thm]{Question}
    
    \newtheorem{conjecture}[thm]{Conjecture}
    
    \newtheorem*{claim*}{Claim}
    
    \newcommand{\thmref}[1]{Theorem~\ref{#1}}
    \newcommand{\secref}[1]{Section~\ref{#1}}
    
    \newcommand{\lemref}[1]{Lemma~\ref{#1}}
    \newcommand{\defref}[1]{Definition~\ref{#1}}

    \newcommand{\corref}[1]{Corollary~\ref{#1}}
    \newcommand{\conjref}[1]{Conjecture~\ref{#1}}

    \theoremstyle{definition}
    \newtheorem{definition}[thm]{Definition}
    \newtheorem*{definition*}{Definition}
    \newtheorem*{remarks*}{Remarks}
    \newtheorem*{remark*}{Remark}
    \newtheorem{remark}[thm]{Remark}
    \newtheorem{example}[thm]{Example}

    \newenvironment{enumerate-math}
    {\begin{enumerate}
    		\addtolength{\itemsep}{5pt}
    		}
    	{\end{enumerate}}

    \newenvironment{enumerate-text}
    {\begin{enumerate}
    		\addtolength{\itemsep}{5pt}
    		}
    	{\end{enumerate}}

    \begin{document}

    	\title{A counterexample to the periodic tiling conjecture}
    
    	\author{Rachel Greenfeld}
    	\address{School of Mathematics, Institute for Advanced Study, Princeton, NJ 08540.}
    	\email{greenfeld.math@gmail.com}
    	\author{Terence Tao}
    	\address{UCLA Department of Mathematics, Los Angeles, CA 90095-1555.}
    	\email{tao@math.ucla.edu}

    	\subjclass[05B45, 52C22, 52C23, 52C25]{05B45, 52C22, 52C23, 52C25}
    	\date{}
    	
    	\keywords{tiling, periodicity}
    	
    \begin{abstract}   The periodic tiling conjecture asserts that any finite subset of a lattice $\Z^d$ that tiles that lattice by translations, in fact tiles periodically. In this work we disprove this conjecture for sufficiently large $d$, which also implies a disproof of the corresponding conjecture for Euclidean spaces $\R^d$.  In fact, we also obtain a counterexample in a group of the form $\Z^2 \times G_0$ for some finite abelian $2$-group $G_0$.  Our methods rely on encoding a ``Sudoku puzzle'' whose rows and other non-horizontal lines are constrained to lie in a certain class of ``$2$-adically structured functions'', in terms of certain functional equations that can be encoded in turn as a single tiling equation, and then demonstrating that solutions to this Sudoku puzzle exist, but are all non-periodic. 
    	\end{abstract}
    	
    	\maketitle

     \section{introduction}


     In 1960, Hao Wang \cite{wang60,wang} studied  the problem of tiling the plane by translated copies of finitely many squares a color attached to each side of each of them, also known as \emph{Wang squares},  where one square lies next to another only if the colors of common edges match. This is a variant of Hilbert's famous {\it Entscheidungsproblem}. Wang conjectured that if a set of such squares admits a tiling of the plane, then it also admits a periodic tiling. Wang's conjecture was disproved by  Berger \cite{Ber,Ber-thesis}, who constructed an  \emph{aperiodic} set of $20,426$  Wang squares, i.e., the set of squares admits tilings but none of these tilings is periodic. Over the years, many more constructions of aperiodic translational tilings (including several not based on Wang squares) were established, with smaller tile-sets (see, e.g., \cite[Table 1]{GT2}).  In this paper we construct an aperiodic translational tiling with a \emph{single tile} in $\Z^2\times G_0$, for a certain finite abelian group $G_0$. As a consequence, we disprove the celebrated  ``periodic tiling conjecture''.  Our methods are based on encoding a ``Sudoku puzzle'' rather than a Wang tiling problem.


        \subsection{The periodic tiling conjecture}
    
        Let $G = (G,+)$ be a discrete abelian group.  If $A, F$ are subsets of $G$, we write $A \oplus F = G$ if the translates $a+F \coloneqq \{a+f: f \in F \}$ of $F$ by elements $a$ of $A$ form a partition of $G$. If this occurs, we say that $F$ \emph{tiles $G$ (by translations)}, and that $A$ is a \emph{tiling set of $G$ by $F$}.  The tiling set $A$ is said to be \emph{periodic} if it is the finite union of cosets of a finite index subgroup of $G$.  
        We will refer to $A \oplus F = G$ as a \emph{tiling equation}, and think of $F,G$ as being given and $A \subset G$ as being an unknown.   We say that the tiling equation $A \oplus F = G$ is \emph{aperiodic} if there exist solutions $A \subset G$ to the tiling equation $A \oplus F = G$, but none of these solutions are periodic. 
        \begin{remark}
            We caution that in the aperiodic order literature the term ``periodic'' instead refers to sets that are unions of cosets of some non-trivial cyclic subgroup of $G$; in our notation, we would refer to such sets as being \emph{one-periodic}.  For instance, if $G = \Z^2$ and $A$ was an arbitrary subset of $\Z$, then $A \times \Z$ would be one-periodic, but not necessarily periodic in the sense adopted in this paper.  The notion of an aperiodic tiling  is similarly modified in the aperiodic order literature, and the notion of aperiodicity used here is sometimes referred to as ``weak aperiodicity''. For tilings in dimensions $d\leq 2$ the two notions of aperiodicity coincide \cite[Theorem 3.7.1]{grunbaum-shephard}.
        \end{remark}
        A well-known conjecture in the area is the periodic tiling conjecture:

        \begin{conjecture}[Discrete periodic tiling conjecture]\label{ptc}\cite{stein, grunbaum-shephard,LW}  Let $F$ be a finite non-empty subset of a finitely generated discrete abelian group $G$.  Then the tiling equation $A \oplus F = G$ is not aperiodic.
        \end{conjecture}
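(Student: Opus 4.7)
The plan is to prove the conjecture by showing that any tiling set must exhibit enough structural rigidity to force periodicity, combining dynamical, Fourier-analytic, and inductive ingredients. First I would set the problem up dynamically: for fixed $F$ and $G$, the collection $\{A \subset G : A \oplus F = G\}$ is a closed, shift-invariant subset of $\{0,1\}^G$ in the product topology. Because $G$ is amenable, this space carries at least one $G$-invariant probability measure, and the task reduces to showing that every such measure is supported on periodic configurations.

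Second, I would exploit harmonic analysis. Encoding $A$ and $F$ as indicator functions, the equation $A \oplus F = G$ becomes the convolution identity $\one_A * \one_F = \one_G$. Passing to the Pontryagin dual $\hat G$ turns convolution into a pointwise product, so the spectral support of $\one_A$ must be contained in the zero locus of $\ft{\one_F}$ together with the trivial character. In the classical case $G = \Z^d$, this zero locus is cut out by cyclotomic-type polynomials, and a natural target is to show it is a finite union of rationally oriented subtori; such a description would propagate via duality back to $A$ and yield periodicity directly.

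Third, I would reduce the general case to $G = \Z^d$ by decomposition. Writing $G \cong \Z^d \times G_{\mathrm{tor}}$ with $G_{\mathrm{tor}}$ finite, one can fiber the tiling equation over $\Z^d$ and treat it as a system of ``colored'' tilings of $\Z^d$ by finitely many tile types indexed by $G_{\mathrm{tor}}$. The base cases $d \le 2$ are known (Newman for $d=1$, Bhattacharya for $d=2$), and the induction step would proceed by slicing along a one-parameter subgroup and combining the spectral rigidity from the Fourier step with the shift-invariance of the measure to promote slice-wise one-periodicity to full periodicity.

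The main obstacle, I expect, lies in controlling $\ft{\one_F}$ and its zero set in dimensions $d \ge 3$. The cyclotomic description that drives the low-dimensional arguments has no clean analogue, and the zero variety can have components of irrational slope or other pathological behavior that thwart the desired spectral rigidity. A related and arguably deeper worry is that a single tiling equation $A \oplus F = G$ is expressive enough to simulate elaborate combinatorial constraints; ruling out the possibility that such constraints encode a system with no periodic solutions is where the heart of the proof must lie, and is precisely where the conjecture could plausibly fail if such an encoding can actually be realized.
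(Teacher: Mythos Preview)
Your proposal attempts to \emph{prove} Conjecture~\ref{ptc}, but the entire point of the paper is that this conjecture is \emph{false}: Theorem~\ref{main} and Corollary~\ref{main-cor1} construct explicit counterexamples in $\Z^2 \times G_0$ (for a suitable finite abelian $2$-group $G_0$) and in $\Z^d$ for $d$ large. So there is no ``paper's own proof'' to compare against; any purported proof of the conjecture must contain a genuine error.

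You have, to your credit, put your finger on exactly where your argument collapses. Your final paragraph worries that a single tiling equation might be expressive enough to encode combinatorial constraints with no periodic solutions, and that this is ``precisely where the conjecture could plausibly fail if such an encoding can actually be realized.'' The paper realizes this encoding. Concretely: a single equation $A \oplus F = G$ is shown (via the stacking argument of Section~\ref{sec:multi} and the functional-equation machinery of Sections~\ref{sec:functional}--\ref{sec:boolean}) to be rich enough to simulate an arbitrary finite system of functional constraints, and in particular to encode a ``$2$-adic Sudoku puzzle'' whose solutions exist but are provably all non-periodic (Section~\ref{conclusion}). This directly defeats your spectral-rigidity step: the zero set of $\ft{\one_F}$ in high dimensions is \emph{not} a finite union of rationally oriented subtori in any usable sense, and the induction on $d$ from the known cases $d \le 2$ cannot close. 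The slicing/fibering reduction you propose is also where the paper's counterexample lives---the tile sits in $\Z^2 \times G_0$, and the finite factor $G_0$ is precisely what carries the extra ``colors'' needed to make the encoding work.
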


In other words, the conjecture asserts that if $F$ tiles $G$ by translations, then $F$ periodically tiles $G$ by translations.

   We also consider the following continuous analogue of this conjecture.  If $\Sigma$ is a bounded measurable subset of a Euclidean space $\R^d$ of positive measure, and $\Lam$ is a subset of $\R^d$, we write $\Lam \oplus \Sigma =_\ae \R^d$ if the translates $\lambda + \Sigma$, $\lambda \in \R^d$, partition $\R^d$ up to null sets; note from the Steinhaus lemma that this forces $\Lambda$ to be discrete.  If this occurs, we say that $\Sigma$ (measurably) \emph{tiles $\R^d$ by translations}, and that $\Lambda$ is a \emph{tiling set of $\R^d$ by $\Lambda$}.  The tiling set $\Lambda$ is said to be \emph{periodic} if it is the finite union of cosets of a lattice (a discrete cocompact subgroup) of $\R^d$.  As before, we view $\Lam \oplus \Sigma =_\ae \R^d$ as a tiling equation with $d$ and $\Sigma$ given, and $\Lam$ as the unknown.  We say that this tiling equation $\Lam \oplus \Sigma =_\ae \R^d$ is \emph{aperiodic} if there exist solutions $\Lam \subset \R^d$ to the tiling equation $\Lam \oplus \Sigma =_\ae \R^d$, but none of these solutions are periodic.
   
\begin{conjecture}[Continuous periodic tiling conjecture]\label{ptc-cts}\cite{grunbaum-shephard,LW}   Let $\Sigma$ be a bounded measurable subset of $\R^d$ of positive measure.  Then the tiling equation $\Lam \oplus \Sigma =_\ae \R^d$ is not aperiodic.
\end{conjecture}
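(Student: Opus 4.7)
The plan is to reduce the disproof of Conjecture~\ref{ptc-cts} to the disproof of the discrete Conjecture~\ref{ptc}, which the abstract announces as the paper's main content. Given a finite $F \subset \Z^d$ whose tiling equation $A \oplus F = \Z^d$ is aperiodic (the promised discrete counterexample, for $d$ sufficiently large), I would form the \emph{thickened} tile
\[
\Sigma \coloneqq F + [0,1)^d \subset \R^d,
\]
which is bounded, measurable, and of positive measure. The tiling equation $\Lam \oplus \Sigma =_\ae \R^d$ admits solutions: any discrete $A$ solving $A \oplus F = \Z^d$ also solves $A \oplus \Sigma =_\ae \R^d$, since $\Z^d \oplus [0,1)^d = \R^d$.

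Next I would show aperiodicity by discretizing along a generic slice. For a.e.\ $t \in [0,1)^d$, restricting any continuous solution $\Lam \oplus \Sigma =_\ae \R^d$ to the affine integer lattice $\Z^d + t$ yields a discrete tiling: setting
\[
A_t \coloneqq \{-\lfloor t - \lambda \rfloor : \lambda \in \Lam\},
\]
one verifies using $F \subset \Z^d$ that $A_t \oplus F = \Z^d$. If $\Lam$ were periodic with period lattice $L \subset \R^d$, then any $\ell \in L \cap \Z^d$ would preserve $A_t$, so $A_t$ would be a \emph{periodic} discrete tiling set for $F$ provided $L \cap \Z^d$ has finite index in $\Z^d$. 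By the assumed aperiodicity of the discrete equation, no such $A_t$ can exist, and the desired contradiction is obtained.

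The main obstacle I anticipate is the commensurability step: showing that the period lattice $L$ of any periodic measurable tiling set $\Lam$ for $\Sigma = F + [0,1)^d$ must intersect $\Z^d$ in a finite-index subgroup. This is not automatic, since even $\Sigma = [0,1)^d$ admits periodic tilings by ``skew'' lattices not commensurable with $\Z^d$; the rigidity must be extracted from $F$ itself. I would attempt this via Fourier analysis: the tiling identity gives $\widehat{\mu_\Lam}\,\widehat{\one_\Sigma} = \delta_0$ away from the origin, and the zero set of $\widehat{\one_\Sigma} = \widehat{\mu_F}\cdot \widehat{\one_{[0,1)^d}}$ is cut out by trigonometric polynomials with integer frequencies, which should push the support of $\widehat{\mu_\Lam} \subset L^*$ into a variety defined over $\Q$ and hence force $L$ to be commensurable with $\Z^d$. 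Once this rationality step is in hand, the argument above closes and Conjecture~\ref{ptc-cts} is disproved as a corollary of the discrete disproof.
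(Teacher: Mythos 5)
Your first two steps are sound: with $\Sigma = F + [0,1)^d$ any discrete solution gives a continuous one, and the slicing argument does work --- since $F \subset \Z^d$, each translate $\lambda + \Sigma$ meets a generic slice $\Z^d+t$ in a set of the form $(t + a_\lambda) + F$ for a single $a_\lambda \in \Z^d$, and a Fubini argument shows that for a.e.\ $t$ the slice is exactly partitioned, so $A_t \oplus F = \Z^d$. The genuine gap is exactly where you flag it: the commensurability of the period lattice $L$ with $\Z^d$. The Fourier sketch does not close it. The tiling condition only constrains $\supp \widehat{\mu_\Lam} \setminus \{0\}$, which may be a \emph{proper} subset of $L^* \setminus \{0\}$ (the coset sums can vanish), and more importantly the zero set of $\widehat{\one_{[0,1)^d}}$ is the union of the slabs $\{\xi : \xi_k \in \Z \setminus \{0\}\}$, which comfortably contains the entire punctured dual of lattices wildly incommensurable with $\Z^d$ (e.g.\ the dual of $\langle (1,\alpha),(0,1)\rangle$ with $\alpha$ irrational). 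This is not a technicality: it reflects the real possibility that $F + [0,1)^d$ admits ``sliding'' tilings --- if $F$ happens to be degenerate in some coordinate, whole slabs can be translated independently --- so for an arbitrary aperiodic $F$ the thickened tile $F+[0,1)^d$ might genuinely admit a periodic tiling whose period lattice meets $\Z^d$ in a rank-deficient subgroup, and then your contradiction never materializes.

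The paper avoids this entirely by not using the cube. Lemma~\ref{rigid-exist} constructs a \emph{rigid} fundamental domain $R_d$ (a cube with small bumps removed from and added to its facets) whose only tilings of $\R^d$ are the translated lattices $\Z^d + t$; the interlocking bumps force each tile to determine its $2d$ lattice neighbours. Taking $\Sigma = F \oplus R_d$, any tiling $\Lam \oplus \Sigma =_\ae \R^d$ satisfies $(\Lam \oplus F) \oplus R_d =_\ae \R^d$, hence $\Lam \oplus F = \Z^d + t$ exactly, and non-periodicity of $\Lam - t$ follows at once from the discrete hypothesis --- no slicing, no rationality argument, and no case analysis on $L$. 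If you want to salvage your route, you should replace $[0,1)^d$ by such a rigid piece (or otherwise prove a rigidity statement for your specific $F$); as written, the rationality step is an unproved and, for general $F+[0,1)^d$, possibly false claim.
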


A standard argument shows that \conjref{ptc-cts} implies \conjref{ptc}. This implication arises  from ``encoding'' a discrete subset $F$ of $\Z^d$ as a bounded measurable subset $F \oplus R_d$ in $\R^d$, where $R_d$ is a ``generic'' fundamental domain of $\R^d/\Z^d$; we provide the details in \secref{sec:cont}.

Conjectures \ref{ptc} and \ref{ptc-cts} have been extensively studied over the years. The following partial results towards these conjectures are known:

\begin{itemize}
    \item Conjecture \ref{ptc} is trivial when $G$ is a finite abelian group, since in this case all subsets of $G$ are periodic.
\item  Conjectures \ref{ptc} and \ref{ptc-cts} were established for $G=\Z$ and $G=\R$ respectively \cite{N,LM,LW}.  The argument in \cite{N} also extends to the case $G = \Z \times G_0$ for any finite abelian group $G_0$ \cite[Section 2]{GT2}. 
\item When $G=\Z^2$, \conjref{ptc}  was established by Bhattacharya \cite{BH} using ergodic theory methods.  In  \cite{GT} we gave an alternative proof of this result, and  furthermore showed that every  tiling in $\Z^2$ by a single tile is \textit{weakly periodic} (a disjoint union of finitely many one-periodic sets). 
\item When $G=\R^2$, \conjref{ptc-cts} is known to hold for any tile that is a topological disk  \cite{bn,gbn,ken,err}.
\item Conjecture \ref{ptc-cts} is known to be true for  convex tiles in all dimensions  \cite{V,M}.
\item For $d>2$, Conjecture \ref{ptc} is known to hold when the cardinality $|F|$ of $F$ is prime or equal to $4$ \cite{szegedy}, but remained open in general. 
    \item In \cite{bgu}, it was recently shown that the discrete periodic tiling conjecture in $\Z^d$ also implies the discrete periodic tiling conjecture in every quotient group $\Z^d/\Lambda$.
    \item The analogues of the above conjectures are known to fail when one has two or more translational tiles instead of just one; see \cite{GT2} (particularly Table 1) for a summary of results in this direction.  In particular, in \cite[Theorems 1.8, 1.9]{GT2} it was shown that the analogue of  \conjref{ptc} for two tiles fails\footnote{Strictly speaking, the counterexample in that paper involved tiling a periodic subset $E$ of the group $G$, rather than the full group $G$. See however Remark \ref{extend} below.} for $\Z^2 \times G_0$ for some finite group $G_0$, and also for $\Z^d$ for some $d$.
\end{itemize}

\subsection{Results}
In this work we construct counterexamples to Conjectures \ref{ptc} and \ref{ptc-cts}.  Our first main result is

\begin{theorem}[Counterexample to \conjref{ptc}, I]\label{main}  There exist a finite abelian group $G_0$ and a finite non-empty subset $F$ of $\Z^2 \times G_0$ such that the tiling equation $A \oplus F = \Z^2 \times G_0$ is aperiodic.  In other words, the discrete periodic tiling conjecture fails for $\Z^2 \times G_0$.
\end{theorem}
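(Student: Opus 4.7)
The plan is to construct the counterexample by encoding an aperiodic combinatorial ``Sudoku puzzle'' into a single tiling equation on $\Z^2 \times G_0$, following the strategy announced in the abstract. First, I would fix $G_0$ to be a finite abelian $2$-group large enough to serve simultaneously as an alphabet of values and as an index set labeling finitely many ``rows'' (or more generally, finitely many lines of prescribed rational slopes) in $\Z^2$. I would seek the unknown set $A \subset \Z^2 \times G_0$ in the special form of the graph of a function $f \colon \Z^2 \to G_0$, or equivalently as a section of the projection $\Z^2 \times G_0 \to \Z^2$, so that tiling by $F$ becomes an infinite system of functional equations on $f$.

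Next, I would engineer the tile $F$ so that the tiling equation $A \oplus F = \Z^2 \times G_0$ is equivalent to a list of ``clues'': along the horizontal line $\{y = m\}$ the function $n \mapsto f(n,m)$ lies in a certain class $\mathcal{S}$ of $2$-adically structured functions (for instance, functions defined by their $2$-adic digits, possibly of a type studied in \cite{GT2}), and similarly along every line of slope $1, 2, 4, \ldots, 2^J$ passing through a lattice point. This encoding step is essentially a bookkeeping exercise: each displacement in $F$ becomes one relation between values of $f$ on two nearby sites, and grouping the displacements appropriately by the $G_0$-coordinate lets several relations be packaged into a single tiling constraint. The existence of at least one solution $f$, and hence of at least one tiling set $A$, should follow by writing down an explicit self-similar $f$ built from the base-$2$ expansion of the coordinates; this kind of construction fits naturally with the $2$-adic nature of $\mathcal{S}$.

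The main obstacle, and where the real content lies, is proving that no solution is periodic. The strategy here is a rigidity theorem for $\mathcal{S}$: any function in $\mathcal{S}$ has only very restricted symmetries, essentially forcing any translation period to be a power of $2$. Imposing this condition simultaneously on the horizontal lines and on the sloped lines $y = 2^j x + c$ for several values of $j$, one gets constraints on the candidate period of $f$ coming from each slope; by choosing the slopes $1, 2, \ldots, 2^J$ and the structure of $\mathcal{S}$ so that their $2$-adic compatibility conditions become inconsistent for any non-trivial finite-index subgroup of $\Z^2 \times G_0$, I would rule out periodicity. A likely technical step is to pass from a hypothetical period of $A$ to periods of the row-functions (and the sloped-line functions) by averaging, then use the rigidity of $\mathcal{S}$ together with elementary $2$-adic arithmetic to reach a contradiction.

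Combining the existence of a solution with the non-existence of a periodic solution then yields Theorem~\ref{main}. The implication to the $\R^d$ version (\conjref{ptc-cts}) and the passage from $\Z^2 \times G_0$ to $\Z^d$ for large $d$ would be handled by the standard encoding argument sketched in \secref{sec:cont} and by embedding $G_0$ into $\Z^{d-2}$.
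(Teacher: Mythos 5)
Your high-level plan matches the announced strategy, but as written it has two gaps that would each be fatal. First, the constraint class $\Scal$ used in the paper contains constant functions (e.g.\ $n \mapsto f_q(0\cdot n + c) = c$), and more generally periodic ones, so imposing ``restriction to each line lies in $\Scal$'' alone admits perfectly periodic solutions; no rigidity theorem for $\Scal$ of the kind you invoke can be true. The paper must add a non-degeneracy condition — the ``good columns'' requirement that each column is a periodized permutation of $\Z/q\Z$ — and it is only the combination of the line constraints with this column condition that forces non-periodicity (via weak digit equidistribution, rarity of high-order lines, extraction of a pseudo-affine structure, and a ``Tetris'' infinite descent on the period). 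Your sketch of the aperiodicity argument, which tries to play finitely many slopes $1,2,\dots,2^J$ against each other $2$-adically, has no mechanism to exclude these degenerate solutions.

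Second, your encoding imposes the $\Scal$-constraint along finitely many slopes of a function defined directly on $\Z^2$. The paper instead needs the constraint along lines of \emph{every} integer slope (the self-similarity step $F_*(n,m) = F(n,qm)$ sends slope-$j$ lines to slope-$qj$ lines, so closure under all slopes is essential for the descent), and this is achieved by a projective duality: the Sudoku board is $\{1,\dots,N\}\times\Z$, and the line $\ell_{i,j}$ of slope $j$ and intercept $i$ is encoded at the single point $(i,j)\in\Z^2$, so that ``every line lies in $\Scal[N]$'' becomes a translation-invariant \emph{pointwise} constraint on a tuple of $N$ boolean-encoded functions on $\Z^2$ — which is what the functional-equation language can express. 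Without this change of variables, finitely many tiling equations cannot impose constraints on infinitely many slopes, and with only finitely many slopes the aperiodicity argument does not close. You also understate the reduction from a system of tiling equations to a single one (which requires a ``rigid'' partition of a cyclic $2$-group) and the difficulty of forcing the various boolean value-sets to be compatible when one must tile the whole group rather than a periodic subset; these are substantive steps in the paper, not bookkeeping.
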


\begin{remark} Our construction will in fact make $G_0$ a (non-elementary) $2$-group, that is to say a finite group whose order is a power of two. 
\end{remark}

The abelian finitely generated group $\Z^2 \times G_0$ can be viewed as a quotient $\Z^d/\Lambda$ of a lattice $\Z^d$ for sufficiently large $d$, so by Theorem \ref{main} and the recent implication in\footnote{This is a generalization of the argument in \cite[Section 9]{GT2}.}  \cite[Corollary 1.2]{bgu}
 we derive

\begin{corollary}[Counterexample to \conjref{ptc}, II]\label{main-cor1}  For sufficiently large $d$, there exists a finite non-empty subset $F$ of $\Z^d$ such that the tiling equation $A \oplus F = \Z^d$ is aperiodic.  In other words, the discrete periodic tiling conjecture fails for $\Z^d$.
\end{corollary}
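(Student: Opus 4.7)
The plan is to deduce the corollary directly by combining Theorem~\ref{main} with the lifting result of \cite[Corollary~1.2]{bgu}, which asserts that if the discrete periodic tiling conjecture fails for a quotient group $\Z^d/\Lam$, then it already fails for the ambient lattice $\Z^d$. So the main task is just to realize $\Z^2 \times G_0$ as such a quotient.

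First I would invoke Theorem~\ref{main} to obtain a finite abelian $2$-group $G_0$ and a finite non-empty $F \subset \Z^2 \times G_0$ for which the tiling equation $A \oplus F = \Z^2 \times G_0$ is aperiodic. Next, applying the structure theorem for finite abelian groups, write
\[ G_0 \;\cong\; \Z/n_1\Z \times \cdots \times \Z/n_k\Z \]
for some positive integers $n_1,\dots,n_k$ (in fact powers of $2$). Setting $d \coloneqq 2+k$ and
\[ \Lam \;\coloneqq\; \{0\}^2 \times n_1\Z \times \cdots \times n_k\Z \;\subset\; \Z^d, \]
we obtain a natural isomorphism $\Z^d/\Lam \cong \Z^2 \times G_0$. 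Under this isomorphism the aperiodic tiling equation supplied by Theorem~\ref{main} transports to an aperiodic tiling equation on the quotient $\Z^d/\Lam$, so the discrete periodic tiling conjecture fails for $\Z^d/\Lam$.

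Finally, I would feed this failure into \cite[Corollary~1.2]{bgu}: the hypothesis of that result is exactly the existence of a finite tile in some quotient $\Z^d/\Lam$ admitting only aperiodic tiling sets, and its conclusion produces a finite tile $\tilde F \subset \Z^d$ for which the tiling equation $\tilde A \oplus \tilde F = \Z^d$ is likewise aperiodic. This gives the desired counterexample for the stated value of $d$.

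Since the substantive content is packaged in Theorem~\ref{main} and in the cited corollary from \cite{bgu}, there is no genuine obstacle here; the only thing to verify is the elementary group-theoretic step that $\Z^2 \times G_0$ is indeed of the form $\Z^d/\Lam$, together with the routine check that an aperiodic tiling of $\Z^2 \times G_0$ by $F$ corresponds under the isomorphism to an aperiodic tiling of $\Z^d/\Lam$ by the image of $F$ (so that the hypothesis of \cite[Corollary~1.2]{bgu} is really met). If anything were to be delicate, it would be ensuring the ``aperiodicity'' notions match between our framework and that of \cite{bgu}, but since both use the same definition (no periodic tiling solution exists), this is immediate.
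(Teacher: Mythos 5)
Your proposal is correct and matches the paper's own derivation exactly: the paper likewise realizes $\Z^2 \times G_0$ as a quotient $\Z^d/\Lam$ and then cites \cite[Corollary 1.2]{bgu} to pull the aperiodic tiling equation of Theorem~\ref{main} back to $\Z^d$. The only cosmetic point is that to get the conclusion for \emph{all} sufficiently large $d$ (rather than just $d=2+k$) one appends trivial factors $\Z/1\Z$ to the quotient, which is immediate.
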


By a standard construction (going back to Golomb \cite{golomb}) relating discrete and continuous tiling problems, we then have a corresponding counterexample to the continuous periodic tiling conjecture:

\begin{corollary}[Counterexample to \conjref{ptc-cts}]\label{main-cor}  For sufficiently large $d$, there exists a bounded measurable subset $\Sigma$ of $\R^d$ of positive measure such that the tiling equation $\Lam \oplus \Sigma =_\ae \R^d$ is aperiodic.  In other words, the continuous periodic tiling conjecture fails for $\R^d$.
\end{corollary}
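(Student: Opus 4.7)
The plan is to deduce \corref{main-cor} from \corref{main-cor1} via the classical Golomb encoding of a discrete tiling problem as a continuous one. Given the finite set $F \subset \Z^d$ from \corref{main-cor1}, I would define
\[
\Sigma := F + R_d \subset \R^d,
\]
where $R_d$ is a bounded measurable fundamental domain of $\R^d/\Z^d$ (for instance $R_d = [0,1)^d$, or a ``generic'' perturbation thereof). Then $\Sigma$ is bounded, measurable, and has positive measure $|F|$.

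Existence of a tiling is immediate: if $A \oplus F = \Z^d$ is the discrete tiling provided by \corref{main-cor1}, then
\[
A \oplus \Sigma = A \oplus F \oplus R_d = \Z^d \oplus R_d =_\ae \R^d,
\]
so $\Lam := A$ is a continuous tiling set of $\R^d$ by $\Sigma$.

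The crux is to show that no tiling set $\Lam$ of $\R^d$ by $\Sigma$ is periodic. I would set up a correspondence between continuous tilings $\Lam \oplus \Sigma =_\ae \R^d$ and discrete tilings $A \oplus F = \Z^d$ that respects periodicity. The key rigidity statement is: \emph{if} $\Lam \oplus (F + R_d) =_\ae \R^d$, \emph{then up to an overall translation by some} $v \in \R^d$, \emph{the set} $\Lam - v$ \emph{lies in} $\Z^d$ \emph{and satisfies} $(\Lam - v) \oplus F = \Z^d$. Granted this, if $\Lam$ were periodic in $\R^d$ (a finite union of cosets of a lattice $\mathcal{L}\subset \R^d$), then from $\Lam + \mathcal{L} = \Lam \subset \Z^d+v$ one has $\mathcal{L} \subset \Z^d$, so $\mathcal{L}$ is a finite-index sublattice of $\Z^d$, and $\Lam - v$ is then a finite union of cosets of $\mathcal{L}$ in $\Z^d$, hence periodic in $\Z^d$—contradicting the aperiodicity of $F$ from \corref{main-cor1}.

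The main obstacle is the rigidity statement above. For the naive choice $R_d = [0,1)^d$ it can genuinely fail: there exist periodic tiling sets in which different ``slabs'' of unit cubes are shifted by different fractional amounts, so $\Lam$ need not lie in a single coset of $\Z^d$. This can be repaired by taking $R_d$ to be a suitably generic fundamental domain—for example, a small random perturbation of $[0,1)^d$—for which a short Fourier-analytic computation shows that the only periodic tiling sets of $\R^d$ by $R_d$ alone are translates of $\Z^d$. Applying this to $S := \Lam + F$, which satisfies $S \oplus R_d =_\ae \R^d$ and is periodic whenever $\Lam$ is, forces $S \subset \Z^d + v$ and hence $\Lam \subset \Z^d + v$. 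Once this rigidity is established, the descent to $\Z^d$ and the resulting contradiction with \corref{main-cor1} are routine.
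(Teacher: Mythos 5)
Your overall architecture is exactly the paper's: encode $F$ as $\Sigma = F \oplus R_d$ for a ``rigid'' fundamental domain $R_d$ of $\R^d/\Z^d$, observe that any tiling $\Lambda \oplus \Sigma =_\ae \R^d$ yields $(\Lambda \oplus F)\oplus R_d =_\ae \R^d$, use rigidity of $R_d$ to force $\Lambda \oplus F = \Z^d + v$, and descend to the discrete aperiodicity of Corollary \ref{main-cor1}. You also correctly identify that the naive cube fails and that everything hinges on a rigidity lemma for $R_d$; and your observation that you only need rigidity against \emph{periodic} tiling sets (rather than all tiling sets, as in the paper's Lemma \ref{rigid-exist}) is a legitimate weakening that would suffice for the descent.

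The genuine gap is that the rigidity lemma is asserted, not proved, and the route you sketch for it is doubtful. You claim that for a ``small random perturbation of $[0,1)^d$'' a ``short Fourier-analytic computation'' shows the only periodic tiling sets by $R_d$ are translates of $\Z^d$. Writing a periodic tiling set as $S=\biguplus_{i=1}^k (t_i+\mathcal{L})$, the Fourier condition is that $\sum_i e^{-2\pi i t_i\cdot\xi}=0$ for every $\xi\in\mathcal{L}^*\setminus\{0\}$ at which $\widehat{1_{R_d}}(\xi)\neq 0$; to exclude all non-lattice periodic tilings you must rule out \emph{every} lattice $\mathcal{L}$ of integer covolume together with every admissible choice of coset representatives. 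These candidates form a continuum, so the standard ``generic $R_d$ avoids countably many degeneracies'' argument does not apply, and it is not clear how to make the zero set of $\widehat{1_{R_d}}$ dodge all of them at once. The paper sidesteps this entirely: Lemma \ref{rigid-exist} builds $R_d$ explicitly from $Q_d$ by removing small subcubes $C_k$ and re-attaching them at $C_k+e_k$, and the proof of rigidity is a purely local geometric fitting argument (any tile adjacent to a bump is forced into position, so any tiling set containing $t$ contains $t\pm e_k$ for all $k$, hence contains, and thus equals, $\Z^d+t$). To complete your proof you would either need to supply such an explicit construction with a geometric rigidity argument, or actually carry out (and likely substantially strengthen) the Fourier-analytic genericity claim.
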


We give the (straightforward) derivation of Corollary \ref{main-cor} from Corollary \ref{main-cor1} in Section \ref{sec:cont}.

Our methods produce a finite group $G_0$, and hence a dimension $d$, that is in principle explicitly computable, but we have not attempted to optimize the size of these objects.  In particular the dimension $d$ produced by our construction will be extremely large.


\subsection{Previous works and constructions}

Aperiodic tilings have been extensively studied and  have found famous applications to many areas of mathematics and physics \cite{quasicrystals}.  	The study of the periodicity of tilings has attracted many researchers, who have introduced methods from various fields, such as geometry and topology \cite{gbn,ken,err}, Fourier analysis \cite{LW,kl,kol}, combinatorics \cite{GT,GT2}, ergodic theory and probability \cite{SM,Levin,BH}, commutative algebra \cite{szegedy,BH,GT},  model theory \cite{BJ,GT2}, and computability theory \cite{Ber,kari,Levin,JR,GT2}.

We do not attempt a comprehensive survey of aperiodic constructions here, but briefly summarize the current state of knowledge as follows.
\begin{itemize}
    \item Aperiodic tiling by multiple tiles have been long known to exist.   The online encyclopedia of tilings \cite{website} contains many explicit examples of such tilings.  In the plane, there are the famous    substitution tilings constructions of Penrose and Ammann \cite{penrose1,penrose2,DB,gardner,ags}. (See also \cite{GS1} and the references therein for the study of substitution tilings.)    Other aperiodic tiling construction methods include the  finite state machine approaches of Kari and Culik \cite{kari,culik}, and the approach of  encoding arbitrary Turing machines\footnote{This method in fact allows one to construct tiling problems which are not only aperiodic, but in fact \emph{logically undecidable}; see e.g., \cite{GT2} for further discussion.} into a tiling problem \cite{Ber,Ber-thesis,R,ollinger,GT2}.  
    \item In addition, if one allows for the tile to be rotated (and/or reflected) in addition to being translated, aperiodic non-translational tilings by a single tile (or ``monotile'') are known to exist; see, e.g., \cite{einstein,einstein2}. The question whether there are {\it planar aperiodic connected tiles} by translations, rotations and reflections remained open until very recently, when the ``hat'' monotile was discovered by Smith--Myers--Kaplan--Godman-Strauss \cite{hat}. Moreover, in a subsequent paper, the same authors constructed a connected planar domain which tiles the plane aperiodically by translations and rotations only (no reflections) \cite{chiral}.
    These results solve the celebrated ``einstein problem'' which is an extension of the second part of Hilbert's eighteenth  problem.
    \item Moreover, when one allows for the group to be \emph{non-abelian}, aperiodic (and undecidable) tilings by a single tile are known to exist.  For instance, in \cite[Theorem 11.2]{GT2} we give a construction in $\Z^2\times H$ for a certain finite non-abelian group $H$. See also \cite{th,SM2,GS2,SSU,abj,C17}    for further references to of aperiodic tilings (or subshifts of finite type) in various groups.
\end{itemize}

We were not able to adapt the previous aperiodic constructions to the setting of a \emph{single} translational tile.  Instead, our source of aperiodicity is more\footnote{Since the initial release of this preprint, we have learned (Emmanuel Jeandel, private communication) that a similar use of $p$-adic functions (with $p$ sufficiently large, but not necessarily a power of $2$) was employed by Aanderaa and Lewis \cite{al}, \cite{lewis} to establish the undecidability of an empty distance subshift problem, which in turn implied the undecidability  of the domino problem; see \cite[\S 4]{jv} for further discussion.}  novel, in that our tiling of $\Z^2 \times G_0$ is forced to exhibit a ``$q$-adic'' (or ``$2$-adic'') structure\footnote{Intriguingly, similar ``inverse limits of coset structure'' appears in other aperiodic tiling constructions, such as the dragon tiling \cite{dragon}, the Robinson tiling \cite{R}, or the trilobite and crab tilings \cite{GS3}, as well as some square-triangle tilings.}  for some large enough but fixed power of two $q=2^s$ (say $s=10$) in the sense that for each power $q^j$ of $q$, the tiling is periodic with period $q^j \Z^2 \times \{0\}$ outside of a small number of cosets of that subgroup $q^j \Z^2 \times \{0\}$, but is unable to be genuinely periodic with respect to any of these periods.  To achieve this we will set up a certain ``Sudoku puzzle'', which will be rigid enough to force all solutions of this problem to exhibit a certain ``self-similar'' (and therefore non-periodic) behavior, yet is not so rigid that there are no solutions whatsoever.  By modifying arguments from our previous paper \cite{GT2}, we are then able to encode this Sudoku-type puzzle as an instance of the original tiling problem $A \oplus F = \Z^2 \times G_0$.

Our encoding approach is similar in nature to previous ``encoding'' arguments in the tiling literature. Berger \cite{Ber,Ber-thesis} encoded any Turing machine as a Wang tiling problem. Since the halting problem is known to be undecidable, Berger's encoding implies the undecidability of the Wang domino problem.  Subsequently, Wang tilings were encoded  to obtain aperiodicity, strong aperiodicity, or even undecidability  of various other problems; see, e.g., \cite{einstein,golomb,SSU,SM,R,GS1,GS2}.  In particular, in  \cite{GT2} we used our tiling language approach to encode any Wang tiling problem as a tiling of $\Z^2\times G_0$ by two tiles, for a suitable finite abelian group $G_0$ (depending on the given problem). This implies the existence of an undecidable tiling problem with only two tiles. Unfortunately, in our encoding of the Wang domino puzzle, we were not able to reduce the number of the tiles from two to one. Thus, the main difficulty we address in our current work is finding another aperiodic puzzle (replacing the Wang domino puzzle) which is also \emph{expressible}  in our tiling language of a tiling by a single tile.

\subsection{Our argument and the organization of the paper}

\begin{figure}
\centering
    \includegraphics[width = .9\textwidth]{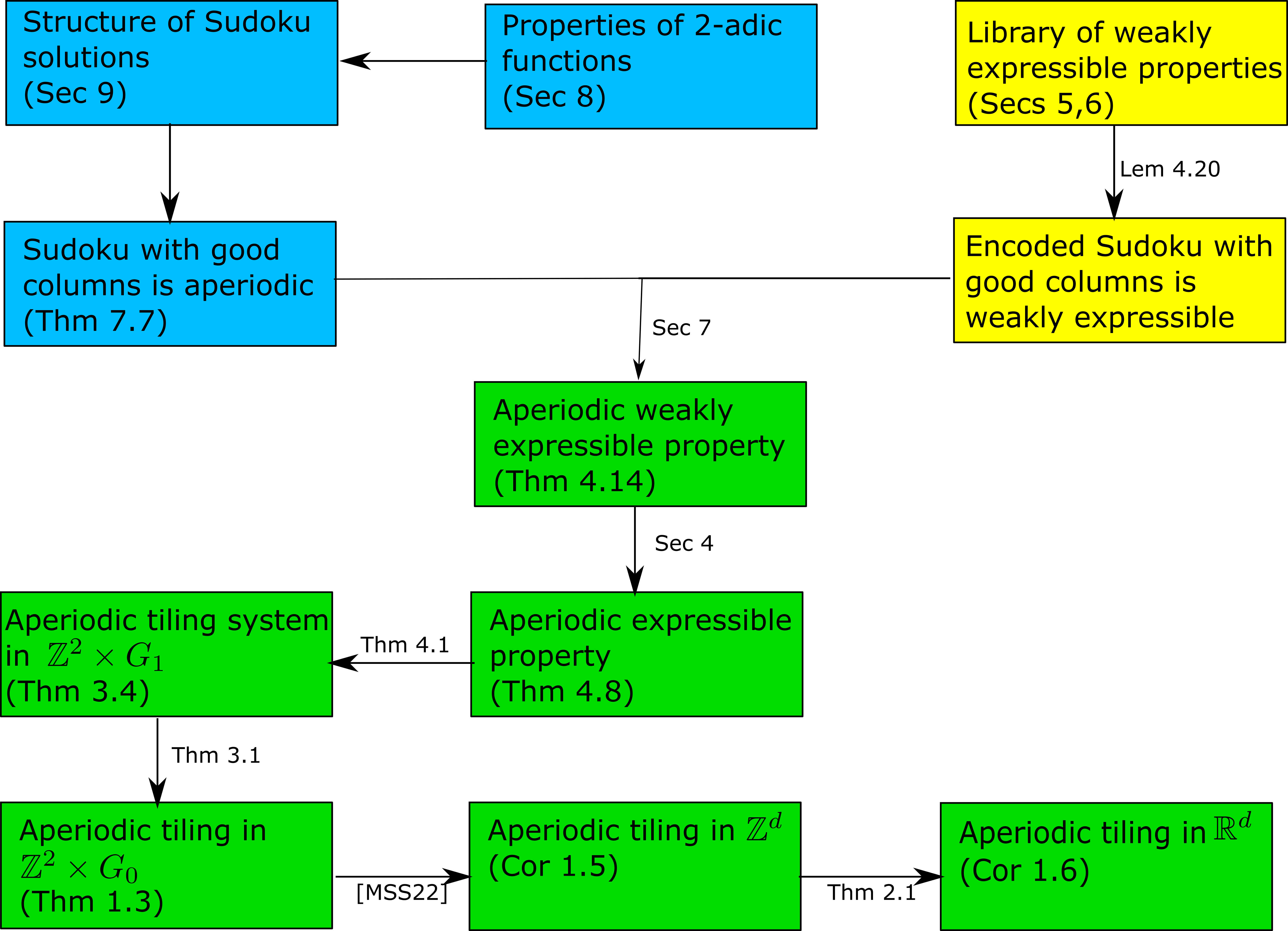}
    \caption{A high-level overview of the logical implications used in our proof. We introduce an aperiodic Sudoku puzzle (blue) and develop a tiling library to express this  puzzle inside a tiling by a single tile (yellow). This, in turn, eventually leads to   constructions of  aperiodic translational tilings by a single tile in $\Z^2\times G_0$, $\Z^d$ and $\R^d$ (green).  This diagram has been ``curled up'' into a compact bounding rectangle purely to save space, and the reader is welcome to mentally ``straighten'' it if desired.}
  \label{fig:logic}
\end{figure} 

Our argument is a variant of the construction used in our previous paper \cite{GT2} to produce aperiodic (and even undecidable) translational tilings with two tiles, and is summarized by the diagram in Figure \ref{fig:logic}. However, the fact that we are now tiling the whole group $G$ instead of a periodic subset of $G$, and that we are only allowed to use one tile instead of two, creates additional technical challenges.

As in \cite{GT2}, in \secref{sec:multi} we begin by replacing the single tiling equation $A \oplus F = G$ with a system $A \oplus F^{(m)} = G$, $m = 1,\dots,M$ of tiling equations for an arbitrary $M$, by an elementary ``stacking'' procedure that takes advantage of our freedom to enlarge the group $G$.  This creates a flexible ``tiling language'' of constraints on the tiling set $A$; the challenge is to use this language to obtain a system of constraints that is strict enough to force aperiodic behavior on this set $A$, while simultaneously being relaxed enough to admit at least one solution.

Next, in \secref{sec:functional}, we again follow \cite{GT2} and pass from this tiling language to a language of functional equations, basically by spending one of the equations $A \oplus F^{(m)} = G$ in the system to force the tiling set $A$ to be a graph of a function $f=(f_1,\dots,f_K)$, where  $f_i \colon \Z^2 \times G_0 \to \Z/q\Z$, $1\leq i\leq K$, and  $G_0$ is an additional small finite abelian group, which we retain for technical reasons.  

One can then use one or more tiling equations $A \oplus F^{(m)} = G$ in the tiling language to create a ``library'' of useful functional constraints on these functions $f_i$, this is done in \secref{sec:library}.  For instance, one can ensure that a given function $f_i$ exhibits periodicity in some direction $v_i \in \Z^2$, or that it encodes (the periodic extension of) a permutation of a cyclic group $\Z/q\Z$.  

In \secref{sec:boolean} we express via functional equations the assertion that a certain subcollection of the $f_i$ (after a routine normalization) take values in a two-element set $\{a,b\} \Mod{q}$, where $a,b$ have different parity, and can thus be viewed as  boolean functions.  By modifying our construction from \cite[Section 7]{GT2}, we can then use tiling equations to encode arbitrary pointwise constraints
\begin{equation}\label{fkx}
(f_1(x), \dots, f_K(x)) \in \Omega
\end{equation}
for all $x \in \Z^2 \times G_0$ and arbitrary subsets $\Omega$ of $\{a,b\}^K$.  This turns out to be a particularly powerful addition to our library of expressible properties.

In \secref{sec:sudoku}, by some further elementary transformations (including a change of variables that resembles the classical projective duality between lines and points), we are then able to reduce matters to demonstrating aperiodicity of a certain ``Sudoku puzzle''.  In this puzzle, we have an unknown function $F \colon \{1,\dots,N\} \times \Z \to \Z/q\Z\setminus\{0\}$ on a vertically infinite ``Sudoku board'' $\{1,\dots,N\} \times \Z$ which fills each cell $(n,m)$ of this board with an element $F(n,m)$ of  $ \Z/q\Z\setminus\{0\}$ for some fixed but large  $q=2^s$.  Along every row or diagonal (and more generally along any non-vertical line) of this board, the function $F$ is required\footnote{This is analogous to how, in the most popular form of a Sudoku puzzle, the rows, columns, and $3 \times 3$ blocks of cells on a board $\{1,\dots,9\} \times \{1,\dots,9\}$ are required to be permutations of the digit set $\{1,\dots,9\}$.} to exhibit ``$2$-adic behavior''; the precise description of this behavior will be given in \secref{sec:sudoku}, but roughly speaking we will require that on each such non-vertical line, $F$ behaves like a rescaled version of the function \begin{equation}\label{fpn}
f_q(n) \coloneqq \frac{n}{q^{\nu_q(n)}} \Mod{q}
\end{equation}
(where $\nu_q(n)$ is the number of times $q$ divides $n$), 
that assigns to each integer $n$ the final non-zero digit in its base $q$ expansion (with the convention $f_q(0) \coloneqq 1$).  We also impose a non-degeneracy condition that the Sudoku solution function $F$ is a periodized permutation along any of its columns.  

In \secref{conclusion}, for suitable choices of parameters $q,N$, we  ``solve'' this Sudoku problem and show that solutions to this problem exist, but necessarily exhibit self-similar behavior (in that certain rescalings of the solution obey similar properties to the original solution), and in particular are non-periodic.  By combining this aperiodicity result with the previous encodings and reductions, we are able to establish  \thmref{main} and hence \corref{main-cor}.

\begin{remark}\label{extend}
  Our current argument also provides a solution to \cite[Problem 12.3]{GT2}. Namely, using the more advanced library we develop here (Sections \ref{sec:cont}--\ref{sec:library}), we can strengthen our previous undecidability result with two tiles by now tiling all of the   group rather than just a periodic subset.  We leave the details of this modification of the construction to the interested reader.
\end{remark}

  \subsection{Notation}\label{notation-sec}

We define the disjoint union $\biguplus_{w \in \Wcal} E_w$ of sets $E_w$ indexed by some set $\Wcal$ to be the union $\bigcup_{w \in \Wcal} E_w$ if the $E_w$ are disjoint, and leave $\biguplus_{w \in \Wcal} E_w$ undefined otherwise.

All groups in this paper will be written additively and be assumed to be abelian unless otherwise specified.  If $A,B,C$ are subsets of $G$, we use $A \oplus B = C$ to denote the assertion that the translates $a+B$, $a \in A$ partition $C$; if the translates $a+B$ are not disjoint, we leave $A \oplus B$ undefined. Thus $A \oplus B = C$ is equivalent to $\biguplus_{a \in A} (a+B) = C$.  Similarly, if $\Lambda \subset \R^d$ and $\Sigma \subset \R^d$ are discrete and measurable respectively, and $E \subset \R^d$ is another measurable set, we write $\Lambda \oplus \Sigma =_\ae E$ if the translates $\lambda + \Sigma$, $\lambda \in \Lambda$ partition $E$ up to null sets; if the $\lambda + \Sigma$ are not disjoint up to null sets, we leave $\Lambda \oplus \Sigma$ undefined.

We use $1_E$ to denote the indicator of an event $E$, thus $1_E$ is $1$ when $E$ is true and $0$ otherwise.

By abuse of notation, we will sometimes identify an integer $a \in \Z$ with its representative $a \Mod{N} \in \Z/N\Z$ in a cyclic group $\Z/N\Z$ when there is no chance of confusion.  For instance, we may refer to the multiplicative identity of $\Z/N\Z$ (viewed as a ring) as $1$ rather than $1 \Mod{N}$.

If $v_1,\dots,v_k$ are elements of a group $G$, we use $\langle v_1,\dots,v_k\rangle$ to denote the group that they generate.  If $H$ is a subgroup of $G$, then a function $f \colon G \to X$ on $G$ is said to be \emph{$H$-periodic} if $f(x+h) = f(x)$ for all $x \in G$ and $h \in H$.  In particular, a function is $\langle v_1,\dots,v_k\rangle$-periodic if and only if $f(x+v_i)=f(x)$ for all $x \in G$ and $i=1,\dots,k$.

We use $X = O(Y)$, $X \ll Y$, or $Y \gg X$ to denote the estimate $|X| \leq CY$ for some absolute constant $C$ (which will not depend on other parameters such as $q$ or $N$).  We write $X \asymp Y$ for $X \ll Y \ll X$.

We use $|E|$ to denote the cardinality of a finite set $E$.  If $E \subset \Omega \subset \R^d$ with $\Omega$ non-empty, we define the \emph{upper density of $E$ in $\Omega$} to be the quantity
$$ \limsup_{M \to \infty} \frac{ |E \cap \{-M,\dots,M\}^d|}{|\Omega \cap \{-M,\dots,M\}^d|}.$$
Thus for instance if $q,N$ are natural numbers, the set $\{1,\dots,N\} \times q\Z$ has upper density $\frac{1}{q}$ in $\{1,\dots,N\} \times\Z$.

    	\subsection{Acknowledgments}
    	 RG was partially supported by the  AMIAS Membership  and NSF  grants  DMS-2242871 and   DMS-1926686. TT was partially supported by NSF grant DMS-1764034 and by a Simons Investigator Award. We thank Nishant Chandgotia, Asaf Katz,  S\'{e}bastien  Labb\'{e} and Misha Sodin   for drawing our attention to some relevant references and to Emmanuel Jeandel for helpful comments. We are also grateful to the anonymous referee for many helpful suggestions that improved the exposition of this paper.

        	
\section{Building a continuous aperiodic tiling equation from a discrete aperiodic tiling equation} \label{sec:cont}

    In this section we show that a counterexample to the discrete periodic tiling conjecture can be converted to a counterexample to the continuous periodic tiling conjecture.  More precisely, we show
    	
    \begin{theorem}[Lifting a discrete aperiodic tiling equation to a continuous aperiodic tiling equation]\label{RdvsZd}
    	Let $d\geq 1$. If there is an aperiodic tiling equation $A \oplus F = \Z^d$ for some finite non-empty subset $F$ of $\Z^d$, then there is an aperiodic tiling equation $\Lam \oplus \Sigma =_\ae \R^d$ for some bounded measurable subset $\Sigma$ of  $\R^d$ of positive measure.  In other words, if \conjref{ptc} fails in $\Z^d$, then  \conjref{ptc-cts} fails in $\R^d$.
    \end{theorem}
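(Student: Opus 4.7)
The plan is a Golomb-style lifting. I will need as a black box the existence of a bounded measurable \emph{rigid} fundamental domain $R_d$ of $\R^d/\Z^d$, meaning a set with $\Z^d \oplus R_d =_\ae \R^d$ such that every tiling set $\Lambda'$ of $\R^d$ by $R_d$ is of the form $\Lambda' = t + \Z^d$ for some $t \in \R^d$. In one dimension one can simply take $R_1 = [0,1)$, while in higher dimensions the unit cube $[0,1)^d$ fails to be rigid (rows and columns can be shifted independently), and one instead attaches a small bump/dent decoration to the faces of $[0,1)^d$ that forces any tile placement to be integer-aligned; this is the classical construction from \cite{golomb}. Given the hypothesized finite $F \subset \Z^d$ with $A \oplus F = \Z^d$ aperiodic, I will set
\[
\Sigma \coloneqq F + R_d = \biguplus_{f \in F}(f + R_d),
\]
which is a bounded measurable set of positive measure; the displayed disjointness is automatic since $F \subset \Z^d$ and $R_d$ is a fundamental domain of $\R^d/\Z^d$.

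Existence of a continuous solution is immediate: the given discrete solution $A$ satisfies $A \oplus \Sigma = A \oplus F \oplus R_d =_\ae \Z^d \oplus R_d =_\ae \R^d$, which is a routine associativity/disjointness check. For aperiodicity, I argue by contradiction: suppose $\Lambda \subset \R^d$ solves $\Lambda \oplus \Sigma =_\ae \R^d$ and is periodic with respect to some cocompact lattice $L \subset \R^d$. Putting $\Lambda' \coloneqq \Lambda + F$ (initially as a multiset), the disjointness structure of $\Lambda \oplus (F + R_d)$ combined with the disjointness of the $f + R_d$ for $f \in F$ forces $\Lambda + F$ to be an ordinary set and to satisfy $\Lambda' \oplus R_d =_\ae \R^d$. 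Rigidity of $R_d$ then produces a translate $t \in \R^d$ with $\Lambda' = t + \Z^d$, and since $F \subset \Z^d$ this forces $\Lambda \subset t + \Z^d$.

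Setting $A' \coloneqq \Lambda - t \subset \Z^d$, I obtain $A' \oplus F = \Z^d$ in the discrete sense. Moreover, because $\Lambda$ is $L$-periodic and contained in the coset $t + \Z^d$, every $\ell \in L$ can be written as a difference of two elements of $t + \Z^d$, so $L \subset \Z^d$; since $L$ is already cocompact in $\R^d$ it is then a finite-index subgroup of $\Z^d$. Thus $A'$ is a finite union of cosets of $L$ in $\Z^d$, i.e., periodic in the discrete sense, contradicting the hypothesized aperiodicity of $A \oplus F = \Z^d$. The only substantive ingredient beyond routine bookkeeping is the rigid fundamental domain $R_d$; the rest is a mechanical transfer between the discrete and continuous tiling equations via associativity of $\oplus$ with a fundamental domain.
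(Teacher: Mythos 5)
Your proposal is correct and takes essentially the same route as the paper: the paper likewise constructs a rigid ``jigsaw-piece'' modification $R_d$ of the unit cube (its Lemma~2.2), sets $\Sigma = F \oplus R_d$, and transfers aperiodicity through the rigidity property via $(\Lambda \oplus F) \oplus R_d =_\ae \R^d$. Your explicit verification that a period lattice $L$ of $\Lambda$ must land inside $\Z^d$ as a finite-index subgroup merely spells out a step the paper leaves implicit.
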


A basic connection between the discrete lattice $\Z^d$ and the continuous space $\R^d$ is given by the tiling relation
$$ \Z^d \oplus Q_d =_\ae \R^d,$$
where $Q_d \coloneqq [0,1]^d$ is the unit cube.  By translation invariance one also has
$$ (\Z^d+t) \oplus Q_d =_\ae \R^d$$
for any $t \in \R^d$.  However, due to the ability to ``slide'' cubes $Q_d$ in various directions, there are many more tilings of $\R^d$ by $Q_d$ than these; this is evidenced for instance by the failure of Keller's conjecture in high dimensions \cite{lagarias-shor}.  Because of this, the unit cube $Q_d$ is not a suitable tool for establishing Theorem \ref{RdvsZd}.  Instead, we need a ``rigid'' version $R_d$ of $Q_d$, or more precisely,

\begin{lemma}[Existence of a rigid tile]\label{rigid-exist}  For any $d \geq 1$, there exists a bounded measurable subset $R_d$ of $\R^d$ such that $\Z^d \oplus R_d =_\ae \R^d$, and conversely the only sets $\Lambda \subset \R^d$ with $\Lambda \oplus R_d =_\ae \R^d$ are translates $\Lambda = \Z^d + t$ of $\Z^d$ for some $t \in \R^d$.
\end{lemma}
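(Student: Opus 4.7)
My plan is to construct $R_d$ explicitly as the unit cube $[0, 1)^d$ decorated with $d$ small notch-ear pairs, one per coordinate direction, with generic side-lengths and positions, and then to deduce rigidity by a direct geometric matching argument.

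Specifically, I would choose $d$ pairwise disjoint small rectangular boxes $N_1, \ldots, N_d \subset [0, 1)^d$ with side-length tuples $(\eta_1^{(i)}, \ldots, \eta_d^{(i)})$ and positional coordinates chosen so that all the relevant parameters are $\Q$-linearly independent, and each $N_i$ small enough that $N_i + e_i$ lies outside $[0, 1)^d$. Set
\[
R_d := \Bigl([0, 1)^d \setminus \bigsqcup_{i=1}^d N_i\Bigr) \cup \bigsqcup_{i=1}^d (N_i + e_i),
\]
where $e_i$ denotes the $i$-th standard basis vector. Since each shifted piece $N_i + e_i$ projects back to $N_i$ modulo $\Z^d$, this $R_d$ is a bounded measurable fundamental domain for $\R^d / \Z^d$, so $\Z^d \oplus R_d =_\ae \R^d$ holds automatically.

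For the rigidity direction, suppose $\Lambda \oplus R_d =_\ae \R^d$. Each tile $\lambda + R_d$ has $d$ ``notches'' $\lambda + N_i$ (holes removed from the base cube) and $d$ ``ears'' $\lambda + N_i + e_i$ (protrusions attached outside). For the tiling to partition $\R^d$, each notch $\lambda + N_i$ must be covered by pieces of other tiles. A de Bruijn-style argument on brick tilings, combined with the $\Q$-linear independence of the side-lengths, rules out any covering of the notch by combinations of ears of shapes $N_j$ with $j \ne i$, since any such partition would yield a non-trivial $\Q$-linear relation among the $\eta_k^{(i)}$; similarly, generic positioning of the $N_j$'s rules out the notch being absorbed into the ``base'' of another tile at a non-integer displacement, because any such displacement forces a conflict with the placement of some other notch or ear. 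Consequently, the notch $\lambda + N_i$ must be covered by a single ear of matching shape $N_i$, and geometric alignment forces that ear to come from the tile $\lambda - e_i$. Hence $\Lambda - e_i \subset \Lambda$ for every $i \in \{1, \ldots, d\}$, so $\Lambda$ is $\Z^d$-invariant; combined with its density $1/|R_d| = 1$ this yields $\Lambda = \Z^d + t$ for some $t \in \R^d$.

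The main obstacle I anticipate is the rigorous verification that no notch can be covered in a ``mixed'' or ``shifted'' way by other tiles. The de Bruijn-type step handles pure ear-configurations via the $\Q$-linear independence of the side-lengths; the base-absorption scenarios require a separate genericity argument, since any non-integer displacement $\lambda'' - \lambda$ between two nearby tiles must force some pair of notches or ears to overlap inconsistently, and verifying this for all possible displacements simultaneously is where one must invoke the generic positions of the $N_i$'s carefully.
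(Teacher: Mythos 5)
Your construction is essentially the one in the paper: the paper likewise takes the unit cube, removes a small cube $C_k$ from the facet $\{x_k=0\}$ and re-attaches $C_k+e_k$ on the opposite side, for each $k=1,\dots,d$, and concludes rigidity by arguing that the only way to fill the removed bump of the tile at $t$ (resp.\ to accommodate its added bump) is with the tiles at $t-e_k$ (resp.\ $t+e_k$). The paper does not need your $\Q$-linear-independence/genericity hypotheses or a de Bruijn-type counting argument: it simply fixes $0<\epsilon<1/5$ and $2\epsilon<x_j<1-3\epsilon$ and declares the matching ``geometrically evident,'' so your sketch is at a comparable (in fact somewhat more elaborate) level of rigor on that point.

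One genuine logical slip to fix: from the notch-matching argument you only obtain $\lambda-e_i\in\Lambda$ for every $\lambda\in\Lambda$, i.e.\ $\Lambda-e_i\subset\Lambda$, and this one-sided inclusion does \emph{not} by itself give $\Z^d$-invariance (it only gives closure under subtracting elements of $\N^d$). You need the dual half of the argument — the ear $\lambda+N_i+e_i$ of the tile at $\lambda$ can only be accommodated by a tile at $\lambda+e_i$ whose notch it fills — to get $\lambda+e_i\in\Lambda$ as well; this is exactly why the paper phrases the conclusion as ``$t\pm e_k$ must also lie in $\Lambda$.'' Once you have $\Z^d+\lambda\subset\Lambda$ for some $\lambda$, the cleanest finish is the paper's: $\Z^d+\lambda$ already tiles $\R^d$ by $R_d$, so there is no room for any further element and $\Lambda=\Z^d+\lambda$ exactly; your density argument accomplishes the same thing.
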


The idea of using rigid tiles to pass back and forth between discrete and continuous tiling questions goes back to the work of Golomb \cite{golomb}; see also \cite[Lemma 9.3]{GT2} for a discretized version of this lemma.

\begin{proof}  The idea is to remove and add ``bumps'' at the facets  of $Q_d$ to make a rigid ``jigsaw puzzle piece''; see Figure \ref{fig:jpuzzle}.  There are many constructions available.  For instance, 
we can define $R_d$ to be the set
$$ R_d \coloneqq \left(Q_d \backslash \biguplus_{k=1}^d C_k\right) \uplus \biguplus_{k=1}^d (C_k+e_k)$$
where for $e_1,\dots,e_d$ is the standard basis for $\R^d$, and for each $k=1,\dots,d$, $C_k \subset Q_d$ is a $\epsilon$-subcube of $Q_d$, which one can for instance define as
$$C_k \coloneqq \left(\prod_{j=1}^{k-1}[x_j,x_j+\epsilon]\right)\times [0,\epsilon]\times \prod_{j=k+1}^d [x_j,x_j+\epsilon]$$
where $0<\epsilon<1/5$ and $2\epsilon<x_j<1-3\epsilon$, $j=1,\dots,d$ are arbitrary.  Because the piece $C_k$ removed for a given $k$ is a translate by an element of $\Z^d$ of the piece $C_k+e_k$ added for a given $k$, we still have
$$ \Z^d \oplus R_d =_\ae \Z^d \oplus Q_d =_\ae \R^d.$$
On the other hand, it is geometrically evident that if $\Lambda \oplus R_d =_\ae \R^d$ and $t \in \Lambda$, then $t \pm e_k$ must also lie in $\Lambda$ for all $k=1,\dots,d$, as there is no other way to fit translates of $R_d$ around the added and removed ``bumps'' $C_k+t$, $C_k+e_k+t$ of $R_d + t$.  Thus $\Lambda$ must contain a translated lattice $\Z^d + t$; since this lattice already is a tiling set of $\R^d$ by $R_d$, we therefore have $\Lambda = \Z^d + t$, as required.
\end{proof}

Using this rigid tile, it is now straightforward to establish Theorem \ref{RdvsZd}.
     
    	\begin{proof}[Proof of Theorem \ref{RdvsZd}]
    	Suppose  that there is a finite non-empty $F\subset \Z^d$ such that the tiling equation $A \oplus F = \Z^d$ is aperiodic.
     
With $R_d$ being the rigid tile provided by Lemma \ref{rigid-exist}, we introduce the bounded measurable subset $\Sigma$ of $\R^d$ by the formula
$$ \Sigma \coloneqq F \oplus R_d \subset \Z^d \oplus Q_d =_\ae \R^d.$$
Clearly $\Sigma$ has positive measure.
It will suffice to show that the tiling equation $\Lambda \oplus \Sigma =_\ae \R^d$ is aperiodic.  On the one hand we have
$$ A \oplus \Sigma =_\ae (A \oplus F) \oplus R_d =_\ae \Z^d \oplus R_d =_\ae \R^d$$
so there is at least one tiling of $\R^d$ by $\Sigma$.

Conversely, suppose that we have a tiling $\Lambda \oplus \Sigma =_\ae \R^d$ of $\R^d$.  Then we have
$$ (\Lambda \oplus F) \oplus R_d =_\ae \Lambda \oplus \Sigma =_\ae \R^d,$$
and hence by Lemma \ref{rigid-exist}, we have $\Lambda \oplus F = \Z^d + t$ for some $t \in \R^d$.  Then $\Lambda-t$ is a tiling set of $\Z^d$ by $F$ and is hence not periodic by hypothesis.  This implies that $\Lambda$ is not periodic, and so the tiling equation $\Lambda \oplus \Sigma =_\ae \R^d$ is aperiodic as claimed.
    	\end{proof}
    	
\begin{figure}
\centering
    \includegraphics[width = .5\textwidth]{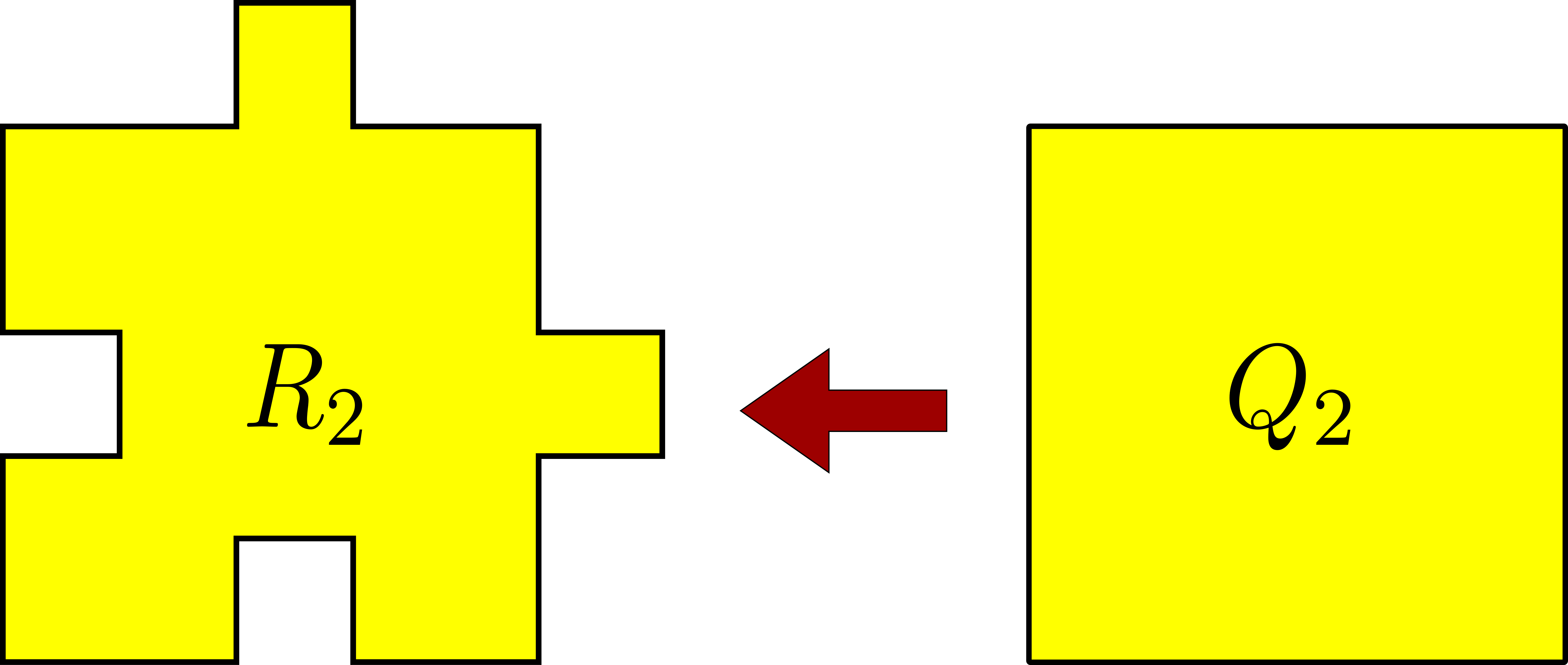}
    \caption{A ``rigid'' tile $R_2$ replacing the non rigid tile $Q_2=[0,1]^2$.  The only tilings $\Lambda \oplus R_2 =_\ae \R^2$ of the plane $\R^2$ by $\R_2$ are the translated lattice tilings $(\Z^2+t) \oplus R_2 =_\ae \R^2$ for $t \in \R^2$.}
  \label{fig:jpuzzle}
\end{figure} 

In view of Theorem \ref{RdvsZd}, we see that Corollary \ref{main-cor1} implies Corollary \ref{main-cor}.  In \cite{bgu} it was shown that any tiling of a quotient group $\Z^d/\Lambda$ can be identified with a tiling of $\Z^d$. This is done by a rigid pullback argument, generalizing  \cite[Section 9]{GT2}.
As a corollary, this gives that the discrete periodic tiling conjecture in $\Z^d$ also implies the discrete periodic tiling conjecture in every quotient group $\Z^d/\Lambda$ \cite[Corollary 1.2]{bgu}.
Thus, we have that Theorem \ref{main} implies Corollary \ref{main-cor}. It therefore remains to establish \thmref{main}.   This is the objective of the remaining sections of the paper.
      	
\section{Building an aperiodic tiling equation from an aperiodic system of tiling equations}\label{sec:multi}

Theorem \ref{main} asserts the construction of a single tiling equation $A \oplus F = G$ which is aperiodic.  As in our previous paper \cite{GT2}, it will be more convenient to consider the significantly more flexible problem of constructing a \emph{system}
\begin{equation}\label{a-system}
A \oplus F_m = G \hbox{ for all }  m=1,\dots,M
\end{equation}
of tiling equations which are (jointly) \emph{aperiodic} in the sense that solutions $A \subset G$  to the system \eqref{a-system} exist, but none of them are periodic.  The ability to pass to pass to this more flexible setup is provided by the following tool (compare with Theorem \ref{RdvsZd}):

\begin{theorem}[Concatenating an aperiodic system of tiling equations into a single aperiodic tiling equation]\label{main-red}  Let $G$ be a finitely generated abelian group.  Suppose that there exist finite non-empty sets $F_1,\dots,F_M \subset G$ for some $M \geq 1$ such that the system \eqref{a-system} of tiling equations is aperiodic.  Then there exist a $2$-group of the form $\Z/N\Z$, $N=2^r$, and a finite non-empty subset $\tilde F$ of $G \times \Z/N\Z$ such that the single tiling equation
$$ \tilde A \oplus \tilde F  = G \times \Z/N\Z$$
is aperiodic.
\end{theorem}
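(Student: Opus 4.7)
I plan to prove \thmref{main-red} via an elementary stacking construction that encodes the system \eqref{a-system} of $M$ tiling equations as a single tiling equation on the enlarged group $G \times \Z/N\Z$.

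First, by padding the system with redundant copies of $F_1$, I may assume that $M = 2^r$ is a power of two: a repeated equation $A \oplus F_1 = G$ is automatic and does not change the solution set. Setting $N := M$, I define the stacked tile
\[
\tilde F := \bigsqcup_{m=1}^{M} F_m \times \{m\} \subset G \times \Z/N\Z,
\]
a disjoint union because the second coordinates are distinct elements of $\Z/N\Z$. For any solution $A \subset G$ of the system, a direct computation shows that the ``flat'' set $\tilde A := A \times \{0\}$ tiles $G \times \Z/N\Z$ by $\tilde F$, via $\tilde A \oplus \tilde F = \bigsqcup_m (A \oplus F_m) \times \{m\} = G \times \Z/N\Z$; hence the single equation has at least one solution.

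To show aperiodicity, I take any tiling $\tilde A$ of $G \times \Z/N\Z$ by $\tilde F$ and decompose it into fibers $A_k := \{a \in G : (a, k) \in \tilde A\}$. The tiling identity translates into the coupled convolution system $\sum_m 1_{A_{h-m}} * 1_{F_m} = 1_G$ for each $h \in \Z/N\Z$. If $\tilde A$ were periodic with period subgroup $P \leq G \times \Z/N\Z$ of finite index, then $P \cap (G \times \{0\})$ has finite index in $G \times \{0\}$, its projection $P_G \leq G$ has finite index, and every fiber $A_k$ is $P_G$-periodic in $G$. From these periodic fibers I would extract a $P_G$-periodic solution $A \subset G$ of the original system \eqref{a-system}, contradicting the aperiodicity hypothesis.

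The chief obstacle is this extraction step: the coupled fiber constraint does not a priori decouple into the $M$ independent tiling equations. The cleanest remedy, following the two-tile stacking construction of \cite[Section 3]{GT2}, is to augment $\tilde F$ with carefully chosen ``marker'' elements that force any tiling $\tilde A$ to concentrate on a single fiber—so that $\tilde A = A \times \{k_0\}$ for some $A$ directly solving the system, rendering the correspondence bijective. Two features make the present setting easier than \cite{GT2}: all right-hand sides of the system equal the full group $G$ (rather than arbitrary periodic subsets), and the power-of-two choice of $N$ keeps the subgroup structure of $\Z/N\Z$ simple, easing the fiber analysis.
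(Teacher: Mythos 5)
There is a genuine gap, and you have in fact located it yourself: with the naive stacking $\tilde F = \biguplus_m F_m \times \{m\}$, a solution $\tilde A$ with fibers $A_k$ only satisfies the coupled system $\biguplus_{m=1}^M (A_{h-m}\oplus F_m) = G$ for each $h \in \Z/N\Z$, which is a genuine \emph{multi-tile} tiling problem rather than the original system. Since the periodic tiling conjecture is known to fail for two or more tiles, one cannot expect aperiodicity to survive this reduction; the extra solutions with several non-empty fibers may well be periodic. Your proposed remedy --- augmenting $\tilde F$ with ``marker elements'' that force $\tilde A$ to concentrate on a single fiber --- is not specified, and the single-fiber goal is both too strong and unattainable: the solution set is invariant under arbitrary vertical translations of individual points subject only to the tiling constraint, and indeed the correct conclusion is that $\tilde A$ is a \emph{graph} $\{(a,f(a)) : a \in A\}$ of some function $f \colon A \to \Z/N\Z$, not a flat set.

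The missing ingredient is the paper's ``rigid partition'': one replaces the singletons $\{m\}$ by a partition $\Z/N\Z = E_1 \uplus \dots \uplus E_M$ with the property that $E_i \cap (E_j + h) \neq \emptyset$ for all $1 \leq i,j \leq M$ and all $h \neq 0$ (constructed probabilistically in Lemma \ref{good-part}, with $N = 2^r$ of size $O(M^2 \log M)$), and sets $\tilde F = \biguplus_m F_m \times E_m$. The intersection property then does two jobs at once: it forces each vertical line $\{a\} \times \Z/N\Z$ to meet $\tilde A$ at most once (two points $(a,h),(a,h')$ would produce overlapping copies of $h+E_1$ and $h'+E_1$), and it forces the translates $a + F_m$, $a \in A$, to be disjoint for each fixed $m$. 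A cardinality count on each vertical line, using $|E_1|+\dots+|E_M| = N$, then shows that $A \oplus F_m = G$ for \emph{every} $m$, so $A$ solves the original system and is non-periodic, whence so is $\tilde A$. Without some device of this kind your construction does not yield the decoupling you need, so the proof as proposed does not go through.
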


This theorem is a variant of our previous result \cite[Theorem 1.15]{GT2}, in which the $2$-group $\Z/N\Z$ was replaced by a proper \emph{subset} of the cyclic group $\Z/(M+1)\Z$.  In order to be able to tile the \emph{whole} group, we will utilize a  ``rigid'' partition of $\Z/N\Z$.  More precisely, we have the following analogue of Lemma \ref{rigid-exist}:

\begin{lemma}[Construction of a ``rigid'' partition]\label{good-part}  For every $M \geq 1$, there exist $N \geq 1$ and a partition $\Z/N\Z = E_1 \uplus \dots \uplus E_M$ of $\Z/N\Z$ into $M$ non-empty sets $E_1,\dots,E_M$, such that 
\begin{equation}\label{rigid-2}
E_i \cap (E_j+h) \neq \emptyset
\end{equation}
for any $1 \leq i,j \leq M$ and $h \in \Z/N\Z \backslash \{0\}$.  In particular, for any $1 \leq i,j \leq M$ and $h_i,h_j \in \Z/N\Z$, we have
\begin{equation}\label{rigid}
    (E_i + h_i) \cap (E_j + h_j) \neq \emptyset
\end{equation} 
unless $h_i = h_j$ and $i \neq j$.
\end{lemma}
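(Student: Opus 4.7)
The plan is to construct the partition by the probabilistic method. Fix $M \geq 1$ and let $N$ be a large natural number to be chosen depending on $M$ (one may take $N$ to be a power of $2$ if convenient for subsequent applications). I would color each element of $\Z/N\Z$ independently and uniformly at random with one of the labels $\{1,\dots,M\}$, and let $E_i$ be the set of elements receiving label $i$. The goal is to show that with positive probability both (a) each $E_i$ is non-empty and (b) the rigidity condition \eqref{rigid-2} holds, which will give the existence of the desired partition. Non-emptiness is immediate: the probability that some $E_i$ is empty is at most $M(1-1/M)^N$, which is small for $N$ large.

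For the rigidity condition, I would fix a triple $(i,j,h)$ with $h \ne 0$ and decompose $\Z/N\Z$ into the $d = \gcd(h,N)$ cosets of the cyclic subgroup $\langle h\rangle$, each of cardinality $\ell = N/d$. Since $E_i \cap (E_j+h) \ne \emptyset$ is equivalent to finding $y \in E_j$ with $y+h \in E_i$, this amounts to finding, in some coset, two consecutive elements (in the cyclic $h$-step ordering) whose colors are $(j,i)$. Within one coset, by considering the $\lfloor \ell/2\rfloor$ disjoint (and therefore independent) consecutive pairs $(y_0,y_0+h),(y_0+2h,y_0+3h),\dots$, the probability that no such $(j,i)$ pair appears is at most $(1-1/M^2)^{\lfloor \ell/2\rfloor}$. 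Using independence of the colorings across the $d$ disjoint cosets, together with the fact that $d \le N/2$ (since $h \ne 0$ forces $d$ to be a proper divisor of $N$), the probability that \eqref{rigid-2} fails for this triple is at most $(1-1/M^2)^{N/4}$.

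A union bound over the at most $M^2(N-1)$ triples $(i,j,h)$ with $h \ne 0$ then gives a total failure probability of at most $M^2 N (1-1/M^2)^{N/4}$, which is $o(1)$ as $N \to \infty$; hence a good partition exists for sufficiently large $N$. The auxiliary statement \eqref{rigid} follows formally from \eqref{rigid-2}: the set $(E_i+h_i)\cap(E_j+h_j)$ is a translate of $E_i \cap (E_j + h_j - h_i)$, which is non-empty by \eqref{rigid-2} when $h_j \ne h_i$, and otherwise equals a translate of $E_i\cap E_j$, which is non-empty precisely when $i = j$. I expect the only delicate point to be the per-coset probability bound, where one must use non-overlapping (rather than arbitrary) consecutive pairs to preserve independence, and must control the case of small coset size $\ell$ by exploiting the large number $d$ of independent cosets; beyond this, the argument is routine.
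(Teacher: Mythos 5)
Your proposal is correct and follows essentially the same route as the paper: a uniform random coloring, a union bound over triples $(i,j,h)$, and extraction of disjoint pairs $(y,y+h)$ to get independence. The only (cosmetic) difference is in the pair-counting: the paper takes $N$ to be a power of two so that every nonzero $h$ has even order and $\Z/N\Z$ partitions directly into $N/2$ pairs $\{x,x-h\}$, giving the exponent $N/2$ rather than your $N/4$ from the coset decomposition, but both bounds suffice.
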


\begin{proof}
To construct such $E_1,\dots,E_M$ we use the probabilistic method.  Let $N$ be a sufficiently large power of two (depending on $M$) to be chosen later.  Let $a \colon \Z/N\Z \to \{1,\dots,M\}$ be a function chosen uniformly at random, thus the $a(x) \in \{1,\dots,M\}$ for $x \in \Z/N\Z$ are independent uniform random variables.  We then set $E_i \coloneqq \{x \in \Z/N\Z: a(x) = i\}$ to be the level sets of $a$.  Clearly the $E_1,\dots,E_M$ partition $\Z/N\Z$.  The probability that a given $E_i$ is empty is $(1-1/M)^{N}$.  Now let $1 \leq i,j \leq M$ and $h \in \Z/N\Z \backslash \{0\}$.   Then the only way that \eqref{rigid-2} fails for this choice of parameters is if $(a(x), a(x-h)) \neq (i,j)$ for all $x \in \Z/N\Z$.  As $h \neq 0$, it has even order, so one can partition $\Z/N\Z$ into $N/2$ sets of the form $\{x,x-h\}$, so the probability that \eqref{rigid-2} fails for this choice of parameters is at most $(1-1/M^2)^{N/2}$.  As the total number of choices of $(i,j,h)$ is at most $M^2 N$, the probability that this construction fails to work is thus at most
$$ M (1-1/M)^N + M^2 N (1-1/M^2)^{N/2}.$$
For $N$ sufficiently large depending on $M$, this failure probability is less than $1$, and the claim follows.
\end{proof}

\begin{remark} An inspection of the bounds shows that one can take the $2$-group $\Z/N\Z$ to be of order $N = O(M^2 \log M)$.  A similar construction works with $\Z/N\Z$ replaced by other finite abelian groups of comparable order.  We were able to also find  deterministic constructions of the sets $E_1,\dots,E_M$ in various such groups, but for such constructions the verification of the key property \eqref{rigid} required a longer argument than the probabilistic arguments provided here.
\end{remark}

We are now ready to prove Theorem \ref{main-red}.  

\begin{proof}[Proof of Theorem \ref{main-red}]
Let $G,F_1,\dots,F_M$ be as in that theorem. We use the partition $\Z/N\Z = E_1 \uplus \dots \uplus E_M$ provided by the above lemma to form the combined tile
\begin{equation}\label{f-stack}
 \tilde F \coloneqq \biguplus_{m=1}^M (F_m \times E_m) \subset G \times \Z/N\Z.
 \end{equation}
To complete the proof of Theorem \ref{main-red}, it suffices to show that the single tiling equation 
\begin{equation}\label{ta2}
 \tilde A \oplus \tilde F =  G \times \Z/N\Z.
\end{equation} 
is aperiodic.

To verify this claim, we first observe that by hypothesis there exists $A \subset G$ such that $A \oplus F_m = G$ for all $m=1,\dots,M$.  If we set $\tilde A \coloneqq A \times \{0\} \subset G \times \Z/N\Z$, then we have from \eqref{f-stack} that
$$ \tilde A \oplus \tilde F = \biguplus_{m=1}^M ((A \oplus F_m) \times E_m) = G \times \biguplus_{m=1}^M E_m = G \times \Z/N\Z.$$
Thus the tiling equation \eqref{ta2} has at least one solution.

Conversely, suppose  $\tilde A \subset G \times \Z/N\Z$ solves the tiling equation \eqref{ta2}.
We first claim that any ``vertical line'' $\{a\} \times \Z/N\Z$, $a \in G$ intersects $\tilde A$ in at most one point.  Indeed, if $(a,h), (a,h') \in \tilde A$ for some $h \neq h'$, then by \eqref{f-stack} $\tilde A \oplus \tilde F$ will contain both $(a + F_1) \times (h+E_1)$ and $(a+F_1) \times (h'+E_1)$ as disjoint sets.  But by \eqref{rigid}, $h+E_1$, $h'+E_1$ intersect, a contradiction.

Because each vertical line $\{a\} \times \Z/N\Z$, $a \in G$ meets $A$ in at most one point, we can write $\tilde A$ as a graph
$$ \tilde A = \{ (a, f(a)): a \in A \}$$
for some $A \subset G$ and some function $f \colon A \to \Z/N\Z$.  From \eqref{ta2}, \eqref{f-stack} we see that the sets 
\begin{equation}\label{cart}
(a+F_m) \times (f(a) + E_m)
\end{equation}
for $a \in A$ and $m=1,\dots,M$ partition $G \times \Z/N\Z$.  

We now claim that for any $m=1,\dots,M$, the sets $a+F_m$, $a \in A$ are disjoint.  For if we had $a+f = a'+f'$ for some distinct $a,a' \in A$ and $f,f' \in F_m$, then $\{a+f\} \times (f(a) + E_m)$ and $\{a'+f'\} \times (f(a')+E_m)$ would have to be disjoint, but this again contradicts \eqref{rigid}.  

By restricting the partition \eqref{cart} of $G \times \Z/N\Z$ to a single vertical line $\{b\} \times \Z/N\Z$, we see that for any $b \in G$, we can partition $\Z/N\Z$ into $f(a_m)+E_m$, where $m=1,\dots,M$ and $a_m$ is the unique element of $A$ (if it exists) such that $b \in a_m+F_m$.  Since $f(a_m)+E_m$ has cardinality $|E_m| > 0$, and $|E_1|+\dots+|E_M|=N$, we conclude that $a_m$ must exist for every $m=1,\dots,M$.  In other words, $A \oplus F_m = G$ for every $m=1,\dots,M$.  By hypothesis, this implies that $A$ is non-periodic.  Since $A$ is the projection of $\tilde A$ to $G$, this implies that $\tilde A$ is also non-periodic. Thus
the tiling equation \eqref{ta2} is aperiodic, and Theorem \ref{main-red} follows.
\end{proof}

Let us say that the \emph{multiple periodic tiling conjecture} holds for some finitely generated abelian group $G$ if, whenever $F_1,\dots,F_M$ are finite non-empty subsets of $G$, the system \eqref{a-system} of tiling equations is not aperiodic.  Obviously, the multiple periodic tiling conjecture for a given group implies the periodic tiling conjecture for that group.  Applying Theorem \ref{main-red}, we conclude that the periodic tiling conjecture will hold for $\Z^2 \times G_0$ for all finite abelian groups $G_0$ if and only if the multiple periodic tiling conjecture holds for $\Z^2 \times G_1$ for all finite abelian groups $G_1$.  Thus, to establish Theorem \ref{main}, it now suffices to establish

\begin{theorem}[Counterexample to multiple periodic tiling conjecture]\label{main-multi}  There exist a finite abelian group $G_1$ and a finite non-empty subsets $F_1,\dots,F_M$ of $G = \Z^2 \times G_1$ such that the system \eqref{a-system} of tiling equations is aperiodic. In other words, the multiple periodic tiling conjecture fails for $\Z^2 \times G_1$.
\end{theorem}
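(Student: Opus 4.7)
The plan is to follow the roadmap sketched in the introduction (and summarized in Figure~\ref{fig:logic}), building the counterexample through a sequence of encodings that progressively translate a combinatorial Sudoku-type puzzle into a system of tiling equations on $\Z^2 \times G_1$. First, I would introduce a finite abelian group $G_0$ (a $2$-group of the form $(\Z/q\Z)^K$ with $q = 2^s$ for some large but fixed $s$) and pass to the enlarged group $G = \Z^2 \times G_0$. Using one dedicated tiling equation $A \oplus F_0 = G$ in the system of the form studied in Section~\ref{sec:multi}, I would force the unknown $A$ to be the graph $\{(x, f(x)) : x \in \Z^2 \times G_0'\}$ of a function $f = (f_1,\dots,f_K)$ with $f_i \colon \Z^2 \times G_0' \to \Z/q\Z$ for a suitable auxiliary finite abelian $2$-group $G_0'$. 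This is the same graph trick used in \cite{GT2}, exploiting a ``rigid partition'' (analogous to Lemma~\ref{good-part}) that prevents two elements of $A$ from sharing a fiber.

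Next, I would assemble a library of tiling equations on $G$ that encode elementary functional constraints on the component functions $f_i$: periodicity $f_i(x+v) = f_i(x)$ along a chosen direction $v \in \Z^2$, the constraint that $f_i$ is a periodized permutation on a cyclic subgroup, and linear relations $f_i(x) - f_j(x) = c$. Each such constraint is expressed as a tile $F_m$ chosen so that $A \oplus F_m = G$ exactly when the corresponding equation holds; these constructions are standard modifications of the language developed in \cite{GT2}. I would then add Boolean-type tiles which, after normalization, force selected $f_i$ to take values in a two-element set $\{a,b\} \pmod q$ with $a,b$ of opposite parity. With these in hand, the key technical library item — a pointwise constraint $(f_1(x),\dots,f_K(x)) \in \Omega$ for an arbitrary $\Omega \subset \{a,b\}^K$ — can be imposed by a single further tiling equation, using a direct adaptation of \cite[Section~7]{GT2}.

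With the library complete, I would perform a change of variables (the projective-duality-style move indicated in Section~\ref{sec:sudoku}) converting the constraints on $f$ into the assertion that a single auxiliary function $F \colon \{1,\dots,N\} \times \Z \to \Z/q\Z \setminus \{0\}$ satisfies: along every non-vertical line of the board $\{1,\dots,N\} \times \Z$, the restriction of $F$ coincides (after rescaling and translation) with the final-nonzero-$q$-digit function $f_q(n) = n/q^{\nu_q(n)} \bmod q$ from \eqref{fpn}; and along every column, $F$ is a periodized permutation. The encoding shows that the existence of a non-periodic solution to this Sudoku puzzle is equivalent to aperiodicity of the assembled tiling system \eqref{a-system}.

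The heart of the argument — and the main obstacle — is then the Sudoku analysis of Section~\ref{conclusion}: for suitable $q = 2^s$ and $N$, one must simultaneously (a) construct at least one solution $F$, and (b) prove that every solution is non-periodic. Existence comes from the self-similar nature of $f_q$: one builds $F$ inductively on the $q$-adic scales, using the fact that $f_q$ is invariant under multiplication by units of $\Z/q\Z$, so that rescaled copies of a partial solution can be coherently glued on increasingly coarse sublattices. For non-existence of periodic solutions, the rigidity of $f_q$ on non-vertical lines forces any solution to satisfy a scaling relation $F(qx) = \phi(F(x))$ (for an appropriate bijection $\phi$) between $F$ and its pullback by multiplication by $q$; this promotes any hypothetical period to arbitrarily large periods of the form $q^j v$, contradicting the column permutation constraint. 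Once non-periodic solvability of the Sudoku puzzle is established, pulling back through the library, the graph encoding, and Theorem~\ref{main-red} yields Theorem~\ref{main-multi}, and hence Theorem~\ref{main}.
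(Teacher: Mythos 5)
Your overall architecture matches the paper's: stack the tiling equations, pass to functional equations via the graph trick, build a library (periodicity, periodized permutations, linear relations, boolean constraints, pointwise constraints $(f_1(x),\dots,f_K(x))\in\Omega$), dualize to a Sudoku puzzle on $\{1,\dots,N\}\times\Z$, and then prove that the puzzle is solvable only non-periodically. Two remarks on the easier parts: existence of a Sudoku solution needs no inductive gluing, since the explicit function $F(n,m)=f_q(m)$ already works (Example~\ref{Standard-solution}); and the phrase ``standard modifications of \cite{GT2}'' hides a genuinely new difficulty, namely that because one now tiles the \emph{whole} group one must separately force the various boolean functions to be compatible (i.e.\ to have ranges that are translates of one another), which is the content of Lemma~\ref{force-compat} and Proposition~\ref{multi-express}.

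The genuine gap is in your non-periodicity argument for the Sudoku puzzle, which is the heart of the whole proof. You assert that ``the rigidity of $f_q$ on non-vertical lines forces any solution to satisfy a scaling relation $F(qx)=\phi(F(x))$,'' but no such relation follows directly from the line constraints: a Sudoku solution is only pinned down by the class $\Scal[N]$ \emph{outside} the bad cosets of its lines, and extracting any self-similarity requires first proving a structure theorem. The paper's route is: good columns imply weak digit equidistribution (Lemma~\ref{digitdensity}); equidistribution implies high-order lines are rare (Lemma~\ref{rare}); rarity plus a concatenation lemma forces $F$ to agree with a pseudo-affine function $An+Bm+C+D\frac{q}{4}m(m-n)$ off its zero set (Proposition~\ref{near-digit}); good columns force $B$ odd (Proposition~\ref{is-odd}), allowing a shearing to normal form (Proposition~\ref{norm}); and only then does the ``Tetris move'' $F_*(n,m)=F(n,qm)$ make sense, with Proposition~\ref{tetris-iterate} showing $F_*$ is again (after shearing) in normal form. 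Moreover, your stated contradiction mechanism is wrong in direction: promoting a period $v$ to ``arbitrarily large periods $q^jv$'' contradicts nothing (any period trivially implies all its multiples are periods). The actual contradiction is an \emph{infinite descent}: normal form forces any vertical period $M$ to be divisible by $q$, and the Tetris move converts a period-$M$ solution into a period-$M/q$ solution that is again in normal form, so $M$ would have to be divisible by every power of $q$. Without the pseudo-affine structure theory and the correctly oriented descent, the non-periodicity claim — and hence Theorem~\ref{main-multi} — is not established.
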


Our remaining task is to establish Theorem \ref{main-multi}. This is the objective of the remaining sections of the paper.

\section{Building an aperiodic system of tiling equations from an aperiodic property expressible in functional equations}\label{sec:functional}

One can view the individual tiling equations $A \oplus F_m = G$ in \eqref{a-system} as sentences in a ``tiling language'' that assert various constraints on the tiling set $A$.  Theorem \ref{main-multi} can then be thought of as an assertion that this tiling language is expressive enough to describe a type of set $A \subset \Z^2 \times G_1$ that can exist, but is necessarily non-periodic.  

In this section we show that one can replace the language of tiling equations $A \oplus F = G$ by the language of \emph{functional equations}, in which the unknown object is now a function $\alpha \colon G \to H$ from a finitely generated abelian group $G$ to a finite abelian group $H$, rather than a subset $A$ of $G$, and then develop the further theory of this ``functional equation language''.  A single functional equation in this language will take the form
\begin{equation}\label{function-ex}
\biguplus_{j=1}^{J} (\alpha(x + h_j) + E_{j}) = H \hbox{ for every } x\in G
\end{equation}
for some given shifts $h_1,\dots,h_J \in G$ and some sets $E_1,\dots,E_J \subset H$, which we may take to be non-empty.  For instance, in Example \ref{clock} below we will consider the functional equation
$$ (\alpha(x) + \{1\}) \uplus (\alpha(x+1) + (\Z/N\Z \backslash \{0\})) = \Z/N\Z$$
that may or may not be satisfied by a given function $\alpha \colon \Z \to \Z/N\Z$.  A system
\begin{equation}\label{function-system}
 \biguplus_{j=1}^{J_i} (\alpha(x + h_{i,j}) + E_{i,j}) = H \hbox{ for all } i=1,\dots,M,\; x\in G
\end{equation}
of such functional equations will be said to be \emph{aperiodic} if solutions $\alpha \colon G \to H$ to this system exist, but that they are all non-periodic, by which we mean that there is no finite index subgroup $\Lambda$ of $G$ such that $\alpha(x+h) = \alpha(x)$ for all $x \in G$ and $h \in \Lambda$.

We then have the following tool to convert aperiodic systems of functional equations to aperiodic systems of tiling equations, in the spirit of Theorems \ref{RdvsZd}, \ref{main-red}.

\begin{theorem}[Converting an aperiodic system of functional equations to an aperiodic system of tiling equations]\label{functional-to-tile}  Let $G$ be a finitely generated abelian group, and let $H$ be a finite abelian group.  Suppose that there exists $M \geq 1$, and for each $i=1,\dots,M$ there exists $J_i \geq 1$, and for each $1 \leq j \leq J_i$ there exist shifts $h_{i,j} \in G$ and sets $E_{i,j} \subset H$, such that the system \eqref{function-system} of functional equations is aperiodic.  Then there exists a system \eqref{a-system} of tiling equations in $G \times H$ which is aperiodic.
\end{theorem}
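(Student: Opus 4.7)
The plan is to encode a function $\alpha \colon G \to H$ as its graph $A = \{(x,\alpha(x)) : x \in G\} \subset G \times H$, and then to realize each functional equation as a single tiling equation in $G \times H$. Concretely, I would build a system of $M+1$ tiling equations for the unknown $A \subset G \times H$: one graph-forcing equation and one equation per functional constraint.

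First, I would take the tile $F_0 \coloneqq \{0_G\} \times H$. A direct check shows that $A \oplus F_0 = G \times H$ holds if and only if $A$ is the graph of some function $\alpha \colon G \to H$, because the translates $(a,b) + F_0 = \{a\} \times H$ partition $G \times H$ iff $A$ meets each vertical fibre $\{g\} \times H$ in exactly one point. Next, for each $i = 1,\dots,M$ I would set
$$F_i \coloneqq \biguplus_{j=1}^{J_i}\bigl(\{-h_{i,j}\} \times E_{i,j}\bigr) \subset G \times H;$$
this is a genuine disjoint union because indices $j \neq j'$ with the same shift $h_{i,j} = h_{i,j'}$ must have disjoint $E_{i,j}, E_{i,j'}$ (forced already by the functional equation), and different shifts give different first coordinates. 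A short computation, substituting $y = x - h_{i,j}$, gives
$$A \oplus F_i \;=\; \bigcup_{y \in G} \{y\} \times \bigcup_{j=1}^{J_i}\bigl(\alpha(y+h_{i,j}) + E_{i,j}\bigr),$$
so (once $A$ is a graph) the tiling equation $A \oplus F_i = G \times H$ is literally equivalent, fibrewise in $y$, to the $i$-th functional equation at $x=y$.

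With the library set up, solutions to the tiling system correspond bijectively to solutions of the functional system, so existence of a tiling solution follows from the hypothesis. The step I expect to require the most care is the non-periodicity transfer. Here I would argue as follows: suppose $A$ is a finite union of cosets of a finite-index subgroup $\Lambda' \leq G \times H$. Set $\Lambda_0 \coloneqq \{g \in G : (g,0_H) \in \Lambda'\}$; since $H$ is finite, the natural map $G/\Lambda_0 \hookrightarrow (G \times H)/\Lambda'$ is injective, so $\Lambda_0$ has finite index in $G$. For $g_0 \in \Lambda_0$, the translation by $(g_0, 0_H)$ preserves $A$, and since $A$ is a graph this forces $\alpha(x + g_0) = \alpha(x)$ for every $x \in G$, i.e.\ $\alpha$ is $\Lambda_0$-periodic, contradicting the aperiodicity of the functional system.

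The remaining details are bookkeeping: checking that each $F_i$ is finite and non-empty, that the disjoint-union tile $F_i$ is well-defined, and that the equivalence between the tiling and functional equations is clean (all of which are routine). The main conceptual point is the graph-forcing tile $F_0$, and the main technical point is the finite-index argument relating periodicity of $A$ in $G \times H$ to periodicity of $\alpha$ in $G$.
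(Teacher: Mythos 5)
Your proposal is correct and follows essentially the same route as the paper: the graph-forcing tile $\{0_G\}\times H$ plus one tile $\biguplus_j \{-h_{i,j}\}\times E_{i,j}$ per functional equation, with the fibrewise equivalence between the tiling and functional systems. Your finite-index argument for transferring non-periodicity from $\alpha$ to $A$ (via the subgroup $\Lambda_0$) is a correct elaboration of a step the paper leaves implicit.
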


\begin{proof}  We will consider the system of tiling equations in $G \times H$ consisting of the ``vertical line test'' equation
\begin{equation}\label{sys-2}
A \oplus (\{0\} \times H) = G \times H
\end{equation}
as well as the tiling equations
\begin{equation}\label{sys-1}
 A \oplus \biguplus_{j=1}^{J_i} \{-h_{i,j}\} \times E_{i,j} =   G \times H
 \end{equation}
for $i=1,\dots,M$.  It will suffice to show that this system of tiling equations is aperiodic.

On the one hand, by hypothesis there is a solution $\alpha \colon G \to H$ to the system \eqref{function-system}.  If we then form the graph 
\begin{equation}\label{agraph}
A \coloneqq \{ (x,\alpha(x)): x \in G \} = \biguplus_{x \in G} (\{x\} \times \{\alpha(x)\}) \subset G \times H
\end{equation}
then one has
$$ A \oplus (\{0\} \times H) = \biguplus_{x \in G} \{x\} \times H = G \times H$$
and
\begin{align*} 
A \oplus \biguplus_{j=1}^{J_i} \{-h_{i,j}\} \times E_{i,j} &= \biguplus_{j=1}^{J_i} \biguplus_{x \in G} \{x-h_j\} \times (\alpha(x) + E_{i,j}) \\
&= \biguplus_{j=1}^{J_i} \biguplus_{y \in G} \{y\} \times (\alpha(y+h_j) + E_{i,j}) \\
&= G \times H
\end{align*}
and so $A$ solves the system of tiling equations \eqref{sys-2}, \eqref{sys-1}.

Conversely, suppose that $A \subset G \times H$ solves the system of tiling equations \eqref{sys-2}, \eqref{sys-1}.  From \eqref{sys-2} we see that each vertical line $\{x\} \times H$, $x \in G$ meets $A$ in exactly one point; in other words, $A$ is a graph \eqref{agraph} of some function $\alpha \colon G \to H$.  By the above calculations, we then see that each tiling equation \eqref{sys-1} is equivalent to its functional counterpart \eqref{function-system}, so that $\alpha$ is a solution to the system \eqref{function-system}.  By hypothesis, $\alpha$ is non-periodic, and hence $A$ is non-periodic also.  This establishes the theorem.
\end{proof}

In order to use the above theorem, it is convenient to introduce some notation.

\begin{definition}  Let $G,H$ be abelian groups.  A \emph{$(G,H)$-property} is a property $P$ that may or may not be satisfied by any given function $\alpha \colon G \to H$.  (If one wishes, one can identify such a property with a subset of $H^G$, namely with the set of all $\alpha \in H^G$ that obey property $P$ (after viewing $\alpha$ as a function from $G$ to $H$.)
\end{definition}

\begin{example}
Every functional equation \eqref{function-ex} associated to a given set of parameters $h_1,\dots,h_J \in G$ and $E_1,\dots,E_J \subset H$ can be viewed as an example of a $(G,H)$-property.  The conjunction of any number of $(G,H)$-properties is obviously also  a $(G,H)$-property, so each functional system \eqref{function-system} also describes a $(G,H)$-property.
\end{example}

\begin{definition}[Expressible property]\label{exp}
    We say that a $(G,H)$-property $P$ is \emph{expressible in the language of functional equations}, or \emph{expressible}\footnote{This notion is somewhat analogous to the notion of an \emph{algebraic set} in algebraic geometry, or of a \emph{variety} in universal algebra.  For instance, the claim in Example \ref{conjunction} is analogous to the claim that the intersection of finitely many algebraic sets is again algebraic. On the other hand, unlike algebraic sets which are closed under unions thanks to the integral domain property $ab=0 \iff a=0 \vee b=0$, it is not the case that the disjunction of expressible properties is again expressible, as there is no analogue of the integral domain property in our setting.} for short, if there exists a system \eqref{function-system} of functional equations for some $M \geq 0$ which is obeyed by a function $\alpha \colon G \to H$ if and only if $\alpha$ obeys property $P$.  
\end{definition}

One can think of expressible properties as describing certain types of subshifts of finite type; see also Remark \ref{trans-remark} below.

\begin{definition}[Aperiodic property]\label{aper}
    We say that a $(G,H)$-property $P$ is \emph{aperiodic} if it is satisfiable, but only by non-periodic functions. 
\end{definition}

 The following examples may help illustrate these concepts.

\begin{example}[Empty and full property] The empty property (satisfied by no function $\alpha \colon G \to H$) is expressible, for instance using an empty functional equation \eqref{function-ex} with $J=0$. Similarly, the complete property (satisfied by every function $\alpha \colon G \to H$) is expressible, using the empty system with $M=0$ (or alternatively by using the functional equation $\alpha(x)+H=H$).  Neither property is aperiodic (the former has no solutions, and the latter includes periodic solutions).
\end{example}

\begin{example}[Closure under conjunction]\label{conjunction}  If $P_1,\dots,P_M$ are a finite collection of expressible $(G,H)$-properties, then their conjunction $P_1 \wedge \dots \wedge P_M$ is clearly also an expressible $(G,H)$-property.
\end{example}

\begin{example}[Expressing a clock]\label{clock} Let $\Z/N\Z$ be a cyclic group.  Let us call a function $\alpha \colon \Z \to \Z/N\Z$ a \emph{clock} if it obeys the property
$$ \alpha(x+1) = \alpha(x)+1$$
for all $x \in \Z$, or equivalently if it takes the form $\alpha(x) = x + a \Mod{N}$ for some $a \in \Z/N\Z$.  Then the property of being a clock is expressible by using the single functional equation
$$ (\alpha(x) + \{1\}) \uplus (\alpha(x+1) + (\Z/N\Z \backslash \{0\})) = \Z/N\Z.$$
On the other hand, the property of being a clock is clearly not aperiodic.
\end{example}

For technical reasons we will not actually employ the clock property in our main argument, but instead rely on the following variant.

\begin{example}[Expressing a periodized permutation]\label{perm-expr}  Let $\Z/N\Z$ be a cyclic group.  Let us call a function $\alpha \colon \Z \to \Z/N\Z$ a \emph{periodized permutation} if it is of the form $\alpha(x) = \sigma(x \Mod{N})$ for some permutation $\sigma \colon \Z/N\Z \to \Z/N\Z$.  For instance, every clock is a periodized permutation, but the converse is not true for $N>2$.  We claim that the property of being a periodized permutation is expressible by the single functional equation
$$ (\alpha(x) + \{0\}) \uplus (\alpha(x+1) + \{0\}) \uplus \dots \uplus (\alpha(x+N-1) + \{0\}) = \Z/N\Z$$
for all $x\in \Z$.  Indeed, this equation asserts that the $N$ points $\alpha(x),\dots,\alpha(x+N-1)$ in $\Z/N\Z$ are all distinct, which when applied to both $x$ and $x+1$ implies that $\alpha(x)=\alpha(x+N)$, and also that $\alpha$ is a permutation on any interval $\{x,\dots,x+N-1\}$, which gives the claim.  Obviously, this property is not aperiodic either.
\end{example}

Theorem \ref{functional-to-tile} tells us that if there is an expressible $(G,H)$-property $P$ that is aperiodic, then one can use this to build an aperiodic system of tiling equations.  (Note that the empty system $M=0$ is not aperiodic, so we must have $M \geq 1$.) As a consequence, Theorem \ref{main-multi} is implied by the following statement.

\begin{theorem}[Expressing aperiodicity]\label{main-aperiod}  There exist finite abelian groups $G_1, H$ and an $(\Z^2 \times G_1,H)$-property $P$ that is both expressible and aperiodic.
\end{theorem}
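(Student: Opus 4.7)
The plan is to build, for a sufficiently large power of two $q = 2^s$ and parameters $N, K$, an expressible and aperiodic $(\Z^2 \times G_1, H)$-property $P$ with $H = (\Z/q\Z)^K$. Via Theorem \ref{functional-to-tile} this will yield the aperiodic system of tiling equations needed for Theorem \ref{main-multi}, and hence Theorem \ref{main}. A function $f = (f_1, \ldots, f_K) \colon \Z^2 \times G_1 \to (\Z/q\Z)^K$ satisfying $P$ will, after an affine change of variables resembling projective duality between points and lines, encode a solution $F \colon \{1, \ldots, N\} \times \Z \to \Z/q\Z \setminus \{0\}$ to a rigid ``Sudoku puzzle'' on a vertically infinite board, which admits solutions but only non-periodic ones.

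First I would extend Examples \ref{clock} and \ref{perm-expr} into a library of basic expressible $(\Z^2 \times G_1, \Z/q\Z)$-properties of individual coordinate functions $f_i$: (i) $\langle v \rangle$-periodicity for any $v$, expressed by the single equation $(f_i(x) + \{0\}) \uplus (f_i(x+v) + ((\Z/q\Z) \setminus \{0\})) = \Z/q\Z$; (ii) being a periodized permutation along a prescribed line, as in Example \ref{perm-expr}; (iii) being valued in a designated two-element set $\{a, b\} \subset \Z/q\Z$ with $a, b$ of opposite parity, so that $f_i$ can play the role of a Boolean variable. The crucial further construction, modelled on \cite[Section 7]{GT2}, is to extend the library to arbitrary pointwise Boolean constraints
\begin{equation*}
(f_1(x), \ldots, f_K(x)) \in \Omega \quad \text{for all } x \in \Z^2 \times G_1
\end{equation*}
for any prescribed $\Omega \subset \{a, b\}^K$; this will use the auxiliary finite group $G_1$ as a ``scratch space'' to decompose the forbidden $K$-tuples into local functional equations.

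With this library in hand, the next step is to encode the Sudoku puzzle itself. The requirement that on every non-vertical line $F$ behave like a rescaled copy of $f_q(n) = n / q^{\nu_q(n)} \Mod{q}$ decomposes as a conjunction of periodicity in the appropriate direction, a periodized-permutation constraint transverse to the line, and finitely many pointwise digit constraints; after the projective-duality change of variables, each such ``line-wise'' condition becomes a condition on a few $f_i$ along a periodic coset, and is thereby expressible by the library. Existence of at least one Sudoku solution is then exhibited by a direct construction of $F(n,m)$ in terms of the base-$q$ expansion of $m$ in a self-similar manner, which one checks satisfies all of the imposed line-wise rules.

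The main obstacle, and the combinatorial heart of the argument, is to show that every Sudoku solution is non-periodic. The natural mechanism is renormalization: I would prove that for any solution $F$, inspection of its $2$-adic structure on the first few non-vertical lines forces $F$, restricted to the sublattice $\{1, \ldots, N\} \times q\Z$ and then appropriately rescaled (both in the second coordinate and in the target), to again be a Sudoku solution of the same type. Iterating this scale invariance, any hypothetical period of $F$ would have to be divisible by $q^j$ for every $j \geq 1$, which is impossible for a function on a rank-one set of columns. Combining this aperiodicity with the expressibility of $P$ established in the previous steps, and then invoking Theorems \ref{functional-to-tile} and \ref{main-red}, will complete the proof.
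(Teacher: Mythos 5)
Your architecture is the paper's: pass to tuples of functions, build a library of expressible properties (periodicity, periodized permutations, booleans, pointwise constraints), encode a $2$-adic Sudoku puzzle via point--line duality, and prove aperiodicity by renormalization and infinite descent on the period. However, two of your steps have genuine gaps as stated. The first is your claim that ``every Sudoku solution is non-periodic'': this is false, because the constant function $F(n,m)=c$ lies in $\Scal[N]$ along every non-vertical line (take $a=b=0$ in Definition \ref{Sp}) and is periodic. The line-wise $2$-adic constraints alone cannot force aperiodicity; the paper must impose a separate non-degeneracy condition (``good columns'': each column agrees off its zero set with a periodized permutation $\sigma_n(m \Mod{q})$), prove Theorem \ref{main-sudoku} only for such solutions, and encode this condition with a second bank of boolean functions and a boolean periodized permutation in the vertical direction. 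This condition does real work in the descent: it yields weak digit equidistribution and forces the vertical coefficient of the pseudo-affine approximant to be odd (Proposition \ref{is-odd}), without which the ``Tetris move'' does not return a solution in normal form. Your ``periodized-permutation constraint transverse to the line'', presented as part of the decomposition of the line-wise constraint, does not supply this.

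The second gap is in item (iii) of your library and the pointwise constraints built on it. By translation invariance (Remark \ref{trans-remark}), an expressible property is preserved under independent vertical translations $f_i \mapsto f_i+u_i$, so ``$f_i$ takes values in a \emph{designated} pair $\{a,b\}$'' is not expressible; at best one expresses that each $f_i$ is boolean with respect to \emph{some} pair $\{a_i,b_i\}$ of opposite parity. To make sense of a joint constraint $(f_1(x),\dots,f_K(x))\in\Omega$ one must then force the pairs $\{a_i,b_i\}$ to be translates of one another. In \cite[\S 7]{GT2} this compatibility came for free because only a subset of the group was tiled; here it is a new obstruction, solved in the paper by an ad hoc arithmetic identity (Lemma \ref{force-compat}) implemented with auxiliary alternating functions along three order-two directions (Proposition \ref{multi-express}). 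Even then each boolean is normalized only up to the global reflection $\tilde\alpha_i\mapsto 1-\tilde\alpha_i$, so only sets $\Omega$ symmetric under $(y_i)\mapsto(1-y_i)$ are weakly expressible (Proposition \ref{sym-bool}), and the Sudoku constraint set must be symmetrized by adjoining an extra reflection bit. Neither issue is addressed by ``using $G_1$ as scratch space''. The remainder of your outline (duality, the standard solution $F(n,m)=f_q(m)$, descent of the period by factors of $q$) matches the paper.
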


\begin{remark}[Translation invariance]\label{trans-remark}  An expressible property $P$ must necessarily be translation invariant in both the horizontal direction $G$ and the vertical direction $H$.  More precisely, if $\alpha \colon G \to H$ obeys $P$, then so do all the horizontal translates $x \mapsto \alpha(x+h)$ for $h \in G$, and vertical translates $x \mapsto \alpha(x)+u$ for $u \in H$.  This is because each equation in \eqref{function-system} is invariant with respect to these translations. The horizontal invariance (together with the ``local'' nature of the equations \eqref{function-system}) also means that such properties can be interpreted as subshifts of finite type.  The ``dilation lemma'' (see e.g., \cite[Theorem 1.2]{ggrt}) also can force some dilation invariances of expressible properties (at least if the shifts $h_{i,j}$ in \eqref{function-system} are of finite order), although we will not formalize this assertion here.  These invariances are a technical complication for our applications, as they provide some limitations on what types of properties one can hope to express in the language of functional equations.  For instance, one cannot remove the constant $a$ from the clock property in Example \ref{clock} and still retain expressibility.
\end{remark}

It will be convenient to ``coordinatize'' the function $\alpha \colon G \to H$ by replacing it with a \emph{tuple} $(\alpha_w)_{w \in \Wcal}$  of functions $\alpha_w \colon G \to H_w$ into various finite abelian groups $H_w$ indexed by a finite set $\Wcal$.  Note that any such tuple $(\alpha_w)_{w \in \Wcal}$ can be identified with a single function $\alpha \colon G \to \prod_{w \in \Wcal} H_w$, defined by the formula
$$ \alpha(x) \coloneqq (\alpha_w(x))_{w \in \Wcal}$$
for $x \in G$.
Define a \emph{$(G, (H_w)_{w \in \Wcal})$-function} to be a tuple $(\alpha_w)_{w \in \Wcal}$ of functions $\alpha_w \colon G \to H_w$, and define a \emph{$(G, (H_w)_{w \in \Wcal})$-property} to be a property $P$ of a $(G, (H_w)_{w \in \Wcal})$-function $(\alpha_w)_{w \in \Wcal}$.  We will say such a property $P$ is \emph{expressible in the language of functional equations}, or \emph{expressible} for short, if the corresponding $(G,\prod_{w \in \Wcal} H_w)$-property $\tilde P$ of the combined function $\alpha \colon G\to \prod_{w \in \Wcal} H_w$ is expressible, that is to say that there is a system of functional equations
\begin{equation}\label{function-system-tuple}
 \biguplus_{j=1}^{J_i} ((\alpha_w(x + h_{i,j}))_{w \in \Wcal} + E_{i,j}) = \prod_{w \in \Wcal} H_w \hbox{ for all } i=1,\dots,M
\end{equation}
for some $M$, some $J_1,\dots,J_M$, and some $h_{i,j} \in G$ and $E_{i,j} \subset \prod_{w \in \Wcal} H_w$ for $1 \leq i \leq M$ and $1 \leq j \leq J_i$, which is satisfied by the tuple $(\alpha_w)_{w \in \Wcal}$ if and only if the property $P$ holds.  We say that $P$ is \emph{aperiodic} if $\tilde P$ is, or equivalently if there are tuples $(\alpha_w)_{w \in \Wcal}$ obeying property $P$, but any such tuple has at least one of the $\alpha_w$ non-periodic.

\begin{example}[Differing by a constant is expressible]\label{const}  Let $H$ be a finite abelian group.  The property of two functions $\alpha_1, \alpha_2 \colon \Z^2 \to H$ differing by a constant (thus $\alpha_1(x) = \alpha_2(x)+c$ for all $x \in \Z$ and some $c \in H$) can be seen to be an expressible $(\Z^2, (H,H))$-property by using the system of functional equations
\begin{equation}\label{alpha-alpha}
 ((\alpha_1(x),\alpha_2(x)) + \Delta) \uplus (\alpha_1(x+e_i),\alpha_2(x+e_i)) + (H^2 \backslash \Delta)) = H^2
 \end{equation}
for $x \in \Z^2$ and $i=1,2$, where $e_1 = (1,0)$, $e_2 = (0,1)$ is the standard basis of $\Z^2$, and $\Delta$ is the diagonal group
$$ \Delta \coloneqq \{ (a,a): a \in H \}.$$ 
Indeed, the equation \eqref{alpha-alpha} can easily be seen to be equivalent to the equation
$$ \alpha_1(x) - \alpha_2(x) = \alpha_1(x+e_i) - \alpha_2(x+e_i),$$
which is in turn equivalent to the constancy of $\alpha_1-\alpha_2$ since the $e_1,e_2$ generate $\Z^2$.  This property is of course not aperiodic, since one can easily find a pair $(\alpha_1,\alpha_2)$ of periodic functions that differ by a constant.
\end{example}

Recall that Theorem \ref{main-multi} is implied by Theorem \ref{main-aperiod} which can now be reduced to establishing the following claim.

\begin{theorem}[Expressing aperiodicity for a tuple]\label{main-aperiod-tuple}  There exist a finite abelian group $G_1$, a tuple $(H_w)_{w \in \Wcal}$ of finite abelian groups indexed by a finite set $\Wcal$, and a $(\Z^2 \times G_1, (H_w)_{w \in \Wcal})$-property that is both expressible and aperiodic.
\end{theorem}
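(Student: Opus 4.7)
The strategy is to work within the framework just established: we build up a ``library'' of expressible $(\Z^2 \times G_1, (H_w)_{w \in \Wcal})$-properties from the basic building blocks in Examples \ref{perm-expr} and \ref{const}, then use this library to encode a rigid combinatorial ``Sudoku puzzle'' whose solution set is non-empty yet contains no periodic tuples. The target is to take $G_1$ a suitable finite abelian $2$-group and the $H_w$'s to be copies of $\Z/q\Z$ for a large fixed power $q = 2^s$, indexed by a set $\Wcal$ rich enough to carry all the auxiliary data (``row'', ``column'', ``diagonal'' coordinates of the puzzle, along with various book-keeping functions).

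First I would enlarge the library. Routine adaptations of Example \ref{const} show that one can express, for any chosen $v \in \Z^2 \times G_1$ and any $w \in \Wcal$, the property that $\alpha_w$ is $\langle v \rangle$-periodic, as well as ``constancy along cosets'' type constraints and linear relations between several $\alpha_w$'s in $\Z/q\Z$. Next, by restricting values one can express that a given $\alpha_w$ takes values only in a two-element set $\{a,b\} \pmod q$ with $a - b$ odd; once these boolean-valued coordinates are isolated, a direct generalization of the pointwise constraint construction alluded to in Section~\ref{sec:boolean} allows one to express an arbitrary pointwise relation $(\alpha_{w_1}(x), \dots, \alpha_{w_K}(x)) \in \Omega$ for any $\Omega \subset \{a,b\}^K$. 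Combined with periodized-permutation constraints from Example \ref{perm-expr}, this gives a toolbox powerful enough to encode essentially any local combinatorial condition on finitely many of the $\alpha_w$'s.

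Next I would encode the Sudoku puzzle. After a projective change of variables of the sort promised in Section \ref{sec:sudoku}, the task reduces to constraining a function $F \colon \{1,\dots,N\} \times \Z \to \Z/q\Z \setminus \{0\}$ satisfying: (i) on every non-vertical line, $F$ agrees with a suitably rescaled version of the $2$-adic valuation-type function $f_q(n) = n/q^{\nu_q(n)} \Mod q$ from \eqref{fpn}; and (ii) on every column, $F$ is a periodized permutation. Both of these conditions are local (or translation-local modulo a large period that can be absorbed into $G_1$), and each is a conjunction of the atomic expressible properties assembled above: the $2$-adic condition is a disjunction over finitely many rescaled templates, encoded via pointwise boolean constraints, and the column condition is directly Example \ref{perm-expr}. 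Closure under conjunction (Example \ref{conjunction}) then packages the whole puzzle as a single expressible $(\Z^2 \times G_1, (H_w)_{w \in \Wcal})$-property $P$.

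The hard part will be the aperiodicity of $P$. For existence of a solution, I would write down an explicit self-similar solution: take $F$ whose value at $(n,m)$ is determined by the $2$-adic expansion of certain linear combinations of $n$ and $m$, and check by hand that along every non-vertical line this produces a rescaled $f_q$ with the correct parameters, and along every column produces a periodized permutation. The crucial \emph{rigidity} step is to show that the constraint set forces \emph{every} solution to exhibit such self-similarity: restricting any solution $F$ to the sublattice where the $2$-adic valuation along each line jumps, and rescaling by $q$, must yield a function that again solves the same Sudoku puzzle. Iterating, one obtains an infinite tower of rescalings, showing that any solution's period (if it had one) would have to be divisible by arbitrarily large powers of $q$ in the non-vertical directions, contradicting finiteness of the period. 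The technical heart of the argument is verifying that the $2$-adic templates along different lines mesh consistently, i.e.\ that any two candidate local rescaling factors agree on their overlap; this is where the precise choice of $q$, $N$, and the auxiliary group $G_0$ is made, and is where I expect the bulk of the case analysis to lie.
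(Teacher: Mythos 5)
Your overall architecture is the same as the paper's: a library of functional-equation constraints, boolean-valued coordinates with pointwise constraints $(\alpha_{w_1}(x),\dots,\alpha_{w_K}(x))\in\Omega$, a projective-duality reduction to a $2$-adic Sudoku puzzle whose non-vertical lines are rescaled copies of $f_q$ and whose columns are periodized permutations, and aperiodicity via a rescale-by-$q$ infinite descent on the period. However, there is a genuine gap in your library-building step. You assert that the pointwise $\Omega$-constraints, and hence the final property $P$, are directly \emph{expressible}. They are not: every functional equation of the form \eqref{function-system} is invariant under independent vertical translations $\alpha_w\mapsto\alpha_w+u_w$ (Remark \ref{trans-remark}), so (i) one cannot express that a coordinate takes values in a \emph{fixed} two-element set, only in \emph{some} set $\{a,b\}$ of opposite parities, forcing $\Omega$ to be symmetric under the global reflection $(y_1,\dots,y_K)\mapsto(1-y_1,\dots,1-y_K)$; and (ii) before a joint constraint on $(\tilde\alpha_1(x),\dots,\tilde\alpha_K(x))$ even makes sense, one must force the value sets $\{a_i,b_i\}$ of the different boolean coordinates to be translates of each other. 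Point (ii) is one of the genuinely new difficulties of tiling the whole group (the paper resolves it with the auxiliary functions $\tau',\tau''$ and the arithmetic Lemma \ref{force-compat}), and your phrase ``by restricting values one can express\dots'' silently assumes it away. Moreover, even after symmetrization and compatibility, arbitrary $\Omega$-constraints are only \emph{weakly} expressible: they require existentially quantified auxiliary functions $\beta_1,\dots,\beta_{W-2}$ realizing a counting identity (Proposition \ref{sym-bool}). This last mismatch is repairable in one line — an aperiodic weakly expressible property yields an aperiodic expressible one by keeping the auxiliary functions in the tuple — but your proposal never introduces the existential-quantification mechanism that makes the repair available.

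On the Sudoku side your descent mechanism is the right one (it is the paper's ``Tetris move''), but note that the rigidity step you defer is where most of the work lives: one needs weak digit equidistribution from the good-columns hypothesis, a bound showing high-order lines are rare, a concatenation lemma along rows/diagonals/anti-diagonals, and — a wrinkle specific to the $2$-adic setting — the resulting global model is only \emph{pseudo-affine}, carrying an unavoidable quadratic term $D\frac{q}{4}m(m-n)$, which must be carried through the normal-form and rescaling analysis. Also, the descent is on the \emph{vertical} period of the columns (the horizontal extent of the board is finite), not on ``non-vertical directions'' as you wrote. None of this changes the strategy, but the pseudo-affine correction is a concrete obstruction your sketch of ``checking that the templates mesh'' would run into.
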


\begin{remark} By Remark \ref{trans-remark}, an expressible $(G, (H_w)_{w \in \Wcal})$-property must be invariant with joint horizontal translation of a $(G, (H_w)_{w \in \Wcal})$-function $(\alpha_w)_{w \in \Wcal}$ to $(\alpha_w(\cdot+h))_{w \in \Wcal}$ by a shift $h \in G$, and also by independent vertical translations $(\alpha_w+u_w)_{w \in \Wcal}$ of such functions by arbitrary shifts $u_w \in H_W$, and in some cases there are also dilation invariances.  Again, these invariances present some limitations on what properties one can hope to be expressible.
\end{remark}

To add even more flexibility to our framework, it will be convenient to relax the notion of expressibility in which we ``allow existential quantifiers''.  

\begin{definition}[Weak expressibility]\label{weak-express} Let $G$ be a finite abelian group, and let $(H_w)_{w \in \Wcal \uplus \Wcal_0}$ be a tuple of finite abelian groups indexed by the disjoint union of two finite sets $\Wcal, \Wcal_0$.  
\begin{itemize}
    \item[(i)] Given a $(G, (H_w)_{w \in \Wcal \uplus \Wcal_0})$-property $P^*$, we define the \emph{existential quantification} (or \emph{projection}) $P$ of $P^*$ to $(G, (H_w)_{w \in \Wcal})$ to be the $(G, (H_w)_{w \in \Wcal})$-property defined by requiring a $(G, (H_w)_{w \in \Wcal})$-function $(\alpha_w)_{w \in \Wcal}$ to obey $P$ if and only if there exists a $(G, (H_w)_{w \in \Wcal \uplus \Wcal_0})$-function $(\alpha_w)_{w \in \Wcal \uplus \Wcal_0}$ extending the original tuple $(\alpha_w)_{w \in \Wcal}$ that obeys $P^*$.
    \item[(ii)] A $(G, (H_w)_{w \in \Wcal})$-property $P$ is said to be \emph{weakly expressible} if it is the existential quantification of some expressible $(G, (H_w)_{w \in \Wcal \uplus \Wcal_0})$-property $P^*$ for some $\Wcal_0$ disjoint from $\Wcal$.
\end{itemize}  
\end{definition}

Expressible and weakly expressible properties (or more precisely, the sets of tuples obeying such properties) can be viewed as analogous\footnote{They are also somewhat analogous to the notions of an algebraic set and semi-algebraic set respectively in real algebraic geometry, though as before this analogy should not be taken too literally.} to $\Pi^0_0$ and $\Sigma^0_1$ sets respectively in the arithmetic hierarchy; we will not need any analogues of higher order sets in this hierarchy.

Obviously every expressible property is weakly expressible (take $\Wcal_0=\emptyset$).  It is somewhat more challenging to locate a weakly expressible property that is not obviously expressible, but we will do so in later sections.  Observe that if $P$ is an aperiodic weakly expressible $(G, (H_w)_{w \in \Wcal})$-property, then the associated expressible $(G, (H_w)_{w \in \Wcal \uplus \Wcal_0})$-property $P^*$ is necessarily also aperiodic, since it is satisfied by at least one tuple $(\alpha_w)_{w \in \Wcal \uplus \Wcal_0}$ (formed by extending a tuple obeying $P$), and any such tuple must contain a non-periodic function: $\alpha_{w_0}\colon G\to H_{w_0}$ for at least one $w_0\in \Wcal\uplus \Wcal_0$  (because the restriction $(\alpha_w)_{w \in \Wcal}$ does).  Hence, to prove Theorem \ref{main-aperiod-tuple}, it suffices to show:

\begin{theorem}[Weakly expressing aperiodicity for a tuple]\label{main-aperiod-tuple-weak}  There exist a finite abelian group $G_1$, a tuple $(H_w)_{w \in \Wcal}$ of finite abelian groups indexed by a finite set $\Wcal$, and an $(\Z^2 \times G_1, (H_w)_{w \in \Wcal})$-property that is both weakly expressible and aperiodic.
\end{theorem}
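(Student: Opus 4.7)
The plan is to follow the roadmap laid out in the introduction, reducing the construction of a weakly expressible aperiodic property on $\Z^2 \times G_1$ to the analysis of an explicit combinatorial ``Sudoku puzzle'' on a vertically infinite board $\{1,\dots,N\} \times \Z$ taking values in $\Z/q\Z \setminus \{0\}$ for $q = 2^s$ with $s$ sufficiently large. The finite group $G_1$ will be a suitable power of $\Z/q\Z$ (together with small auxiliary cyclic factors used for indexing directions), and the target groups $H_w$ will also all be copies of $\Z/q\Z$. The overall structure is: build a rich expressible/weakly expressible ``library'' of constraints, encode the Sudoku puzzle in that library, and then prove the puzzle is aperiodic.

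First I would enrich the expressive power of the functional equation language by establishing a library of basic constraints. Simple primitives are immediate: $v$-periodicity of a single $\alpha_w$ follows from a single equation of the form $(\alpha_w(x)+\{0\}) \uplus (\alpha_w(x+v)+(\Z/q\Z\setminus\{0\})) = \Z/q\Z$; periodized-permutation behaviour along any chosen direction follows as in Example \ref{perm-expr}; and the constraint that two functions differ by a constant follows as in Example \ref{const}. Then, using extra hidden coordinates $\Wcal_0$ in the sense of Definition \ref{weak-express}, I would extend the library to include ``$\alpha_w$ takes values in a two-element subset $\{a,b\} \pmod q$ with $a,b$ of different parity'', i.e.\ a Boolean-valued constraint, and finally arbitrary pointwise constraints of the form $(f_1(x),\dots,f_K(x)) \in \Omega$ for a prescribed $\Omega \subset \{a,b\}^K$. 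This last item is the powerful addition promised in Section \ref{sec:boolean} and is what makes the language strong enough to encode combinatorial puzzles of Sudoku type.

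Next I would set up the Sudoku puzzle itself: an unknown $F \colon \{1,\dots,N\}\times\Z \to \Z/q\Z \setminus \{0\}$ such that along every non-vertical line $F$ agrees with an affine-rescaled copy of the ``last nonzero $q$-adic digit'' function $f_q(n) \coloneqq n/q^{\nu_q(n)} \pmod q$, and along every column $F$ is a periodized permutation. A projective-style change of variables, of the kind alluded to in the overview of Section \ref{sec:sudoku}, swaps the role of ``points'' and ``slopes'' and converts the line constraints into pointwise Boolean constraints on a finite collection $(f_i)_{i=1}^K$ of auxiliary one-dimensional functions. Combining this with the library above presents the Sudoku puzzle as a weakly expressible $(\Z^2 \times G_1,(H_w)_{w\in\Wcal})$-property, with the many one-dimensional auxiliary functions living in $\Wcal_0$.

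The hard part is to show that the Sudoku puzzle is aperiodic. Existence of a solution should come from an explicit self-similar construction: since $f_q(qn) = f_q(n)$, the rescaling $n \mapsto qn$ lifts a partial solution on a small board up to a finer board, and an inverse-limit/bootstrap argument should produce a global solution. For the harder direction, the $2$-adic rigidity of the admissible line functions should force any solution $F$ to be, at each scale $q^j$, essentially a rescaled copy of a solution at the next coarser scale; concretely, the restriction of $F$ to the coset $q\Z \subset \Z$ along each line is forced (up to a bounded-index correction) to be again of the same form. Iterating, a periodic solution of period $T$ would have to be compatible with the self-similar scaling at every $2$-adic scale $q^j$, which forces $T$ to be divisible by arbitrarily large powers of $q$ and hence $T=0$, a contradiction. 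The technical heart of the argument is calibrating the precise class of ``$2$-adically structured'' line functions so that it is rigid enough to close this self-similarity loop yet flexible enough to be encoded pointwise via the Boolean library; once this balance is struck, combining the aperiodicity of the Sudoku puzzle with the functional-equation encoding yields Theorem \ref{main-aperiod-tuple-weak}, and hence, by the reductions of Sections \ref{sec:cont}--\ref{sec:functional}, Theorem \ref{main}.
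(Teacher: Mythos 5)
Your proposal follows the same roadmap as the paper: build a library of (weakly) expressible constraints culminating in pointwise boolean constraints, encode a $2$-adic Sudoku puzzle via a point--line duality, and prove aperiodicity of the puzzle by a self-similarity/infinite-descent argument. However, as written it has two genuine gaps at exactly the places where the real work lies.

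First, your step ``extend the library to include boolean-valued constraints and then arbitrary pointwise constraints $(f_1(x),\dots,f_K(x))\in\Omega$'' does not go through as stated. By Remark \ref{trans-remark}, any expressible property is invariant under \emph{independent} vertical translations of each coordinate function, so while one can express that each $\alpha_i$ takes values in some two-element set $\{a_i,b_i\}$ of mixed parity (Proposition \ref{bool-expr}), nothing forces the sets $\{a_i,b_i\}$ to be translates of one another; without that compatibility the joint constraint $(\tilde\alpha_1(x),\dots,\tilde\alpha_W(x))\in\Omega$ is not even well-defined. The paper flags this as a new difficulty absent from \cite{GT2} and resolves it with the auxiliary alternating functions $\tau',\tau''$, the order-$2$ generators $e,e',e''$ of a $(\Z/2\Z)^3$ factor in $G_1$, and the arithmetic identity of Lemma \ref{force-compat}; relatedly, $\Omega$ must be symmetrized under the reflection $(y_w)\mapsto(1-y_w)$ and a reference bit $\omega_*$ introduced, since the normalization of a boolean function is only defined up to that reflection. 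Your choice of $G_1$ as ``a power of $\Z/q\Z$'' omits this essential $(\Z/2\Z)^3$ ingredient. Second, the aperiodicity of the Sudoku puzzle is only gestured at: the assertion that the restriction to $q\Z$ along each line ``is forced (up to a bounded-index correction) to be again of the same form'' is the desired conclusion, not an argument. The paper needs the good-columns hypothesis to get weak digit equidistribution, hence rarity of high-order lines (Lemma \ref{rare}), a concatenation lemma producing a \emph{pseudo-affine} (not affine --- the quadratic term $D\frac{q}{4}m(m-n)$ is unavoidable in characteristic $2$) global model, a normal form with odd vertical coefficient, and only then the Tetris move that divides the period by $q$. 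Your descent idea is the right skeleton, but without these intermediate structures the loop does not close.
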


To prove this theorem, it will be useful to observe that the class of weakly expressible properties is closed under a number of natural operations, which we now introduce.

\begin{definition}[Lift]\label{lift}
If $G$ is a finitely generated abelian group, $(H_w)_{w \in \Wcal}$ is a tuple of finite abelian groups indexed by a finite set $\Wcal$, $\Wcal_1$ is a subset of $\Wcal$, and $P_1$ is a $(G, (H_w)_{w \in \Wcal_1})$-property, we define the \emph{lift} of $P_1$ to $(G, (H_w)_{w \in \Wcal})$ to be the $(G, (H_w)_{w \in \Wcal})$-property $P$, defined by requiring a $(G, (H_w)_{w \in \Wcal})$-function $(\alpha_w)_{w \in \Wcal}$ to obey $P$ if and only if the $(G, (H_w)_{w \in \Wcal_1})$-function $(\alpha_w)_{w \in \Wcal_1}$ obeys $P_1$.  
\end{definition}

One can think of this operation as that of adding ``dummy functions''  $\alpha_w \colon \Z^2 \times G_1 \to H_w$ for $w \in \Wcal \backslash \Wcal_1$ that play no actual role in the lifted property $P$.

\begin{example}\label{equal-ex}  The $(\Z^2,(H,H,H))$-property of a triple $(\alpha_1,\alpha_2,\alpha_3)$ of functions $\alpha_1,\alpha_2,\alpha_3 \colon \Z^2 \to H$ such that $\alpha_2, \alpha_3$ both differ from $\alpha_1$ by a constant  (i.e., $\alpha_2 = \alpha_1 + c$ and $\alpha_3 = \alpha_1 + c'$ for some $c,c' \in H$) can be viewed as the conjunction of two lifts of (relabelings of) the $(\Z,(H,H))$-property described in Example \ref{const}; one of these lifts will capture the property of $\alpha_1$ and $\alpha_2$ differing by a constant, and another will capture the property of $\alpha_1$ and $\alpha_3$ differing by a constant.  If we take an existential quantification to eliminate the role of $\alpha_1$, we conclude (from Lemma \ref{closure} below) that the $(\Z^2, (H,H))$-property of a pair $\alpha_2,\alpha_3 \colon \Z^2 \to H$ differing by a constant is then weakly expressible (since this occurs if and only if we can locate $\alpha_1 \colon \Z^2 \to H$ such that $\alpha_2,\alpha_3$ both differ from $\alpha_1$ by a constant).  Of course, from Example \ref{const} we already knew that this property was in fact expressible, so this does not give an example of a weakly expressible property that is not expressible.  However, in the next section we shall see several examples in which existential quantification can be used to produce weakly expressible properties that are not obviously expressible.
\end{example}

\begin{example} If one lifts a $(G, (H_w)_{w \in \Wcal_1})$-property $P_1$ to a $(G, (H_w)_{w \in \Wcal})$-property and then takes an existential quantification back to $(G, (H_w)_{w \in \Wcal_1})$, one recovers the original property $P_1$ (since one could simply set all the dummy functions equal to zero).
\end{example}

\begin{definition}[Pullback]   Let $G$ be a finitely generated abelian group, let $G'$ be a subgroup of $G$, and let $(H_w)_{w \in \Wcal}$ be a tuple of finite abelian groups indexed by a finite set $\Wcal$.  If $P'$ is a $(G', (H_w)_{w \in \Wcal})$-property, we define the \emph{pullback} of $P'$ to $(G, (H_w)_{w \in \Wcal})$ to be the $(G, (H_w)_{w \in \Wcal})$-property $P$ defined by requiring a $(G, (H_w)_{w \in \Wcal})$-function $(\alpha_w)_{w \in \Wcal}$ to obey $P$ if and only if the $(G', (H_w)_{w \in \Wcal})$-function $(\alpha_{w,x_0})_{w \in \Wcal}$ defined by $\alpha_{w,x_0}(x') \coloneqq \alpha_w(x_0+x')$ for $x' \in G'$ and $w \in \Wcal$ obeys $P'$ for every choice of base point $x_0 \in G$.
\end{definition}

\begin{example}[Pulling back the clock]\label{pull-clock}  Let $v$ be a non-zero vector in $\Z^2$. Then we can identify $\Z$ with the subgroup $\Z v = \{ n v: n \in \Z\}$ of $\Z^2$.  If we view the clock property from Example \ref{clock} as a $(\Z v, \Z/N\Z)$-property, its pullback to $(\Z^2, \Z/N\Z)$ is the $(\Z^2, \Z/N\Z)$-property of a function $\alpha \colon \Z^2 \to \Z/N\Z$ being a clock along the direction $v$, that is to say for every $x_0 \in \Z^2$ there exists $a_{x_0} \in \Z/N\Z$ such that $\alpha(x_0 + nv) = a_{x_0} + n \Mod{N}$ for every $n \in \Z$.
\end{example}

We now record the closure properties of (weak) expressibility that we will need.

\begin{lemma}[Closure properties of (weak) expressibility]\label{closure}\ 
\begin{itemize}
    \item[(i)]  Any lift of an expressible (resp. weakly expressible) property is also expressible (resp. weakly expressible).
    \item[(ii)]  Any pullback of an expressible (resp. weakly expressible) property is also expressible (resp. weakly expressible).
    \item[(iii)]  The conjunction $P \wedge P'$ of two expressible (resp. weakly expressible) $(G, (H_w)_{w \in \Wcal})$-properties is also expressible (resp. weakly expressible).
    \item[(iv)] Any existential quantification of a weakly expressible property is weakly expressible.
\end{itemize}
\end{lemma}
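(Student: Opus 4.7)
The plan is to verify each of the four claims by an essentially syntactic manipulation of the witnessing systems of functional equations; the only subtlety is the bookkeeping of index sets. For (i), suppose $P_1$ is an expressible $(G,(H_w)_{w\in\Wcal_1})$-property, witnessed by a system of the form \eqref{function-system-tuple} with sets $E_{i,j}\subset\prod_{w\in\Wcal_1}H_w$. I would simply replace each $E_{i,j}$ by the cylinder $\tilde E_{i,j}\coloneqq E_{i,j}\times\prod_{w\in\Wcal\setminus\Wcal_1}H_w\subset\prod_{w\in\Wcal}H_w$; the resulting system on $(G,(H_w)_{w\in\Wcal})$ is satisfied by a tuple $(\alpha_w)_{w\in\Wcal}$ precisely when the original is satisfied by the restriction $(\alpha_w)_{w\in\Wcal_1}$, because the extra coordinates contribute the same full fibers to every summand of every partition. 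For the weakly expressible version one performs the same cylinder extension on the expressible witness $P^*$ (over an auxiliary set $\Wcal_0$ disjoint from $\Wcal$) and then existentially quantifies over the same $\Wcal_0$.

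For (ii), if $P'$ is expressible on $(G',(H_w)_{w\in\Wcal})$ via a system with shifts $h_{i,j}\in G'$ and sets $E_{i,j}$, then the \emph{same} shifts and sets, now reinterpreted as a system of the form \eqref{function-system-tuple} on $G$, witness expressibility of the pullback: the pullback asks the old system to hold at every $x_0+x'$ with $x_0\in G$ and $x'\in G'$, and as $(x_0,x')$ ranges over $G\times G'$ the sum ranges over all of $G$, so the pullback conditions reduce to ``$\biguplus_j((\alpha_w(x+h_{i,j}))_{w\in\Wcal}+E_{i,j})=\prod_{w\in\Wcal}H_w$ for all $x\in G$''. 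Weak expressibility passes through because pullback commutes with existential quantification: the auxiliary functions in any witness $P^*$ already live on all of $G$, so they can be freely chosen after pulling back.

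For (iii), in the expressible case one simply concatenates the two witnessing systems, yielding a single finite system of functional equations that witnesses $P\wedge P'$. For the weakly expressible case, write $P$ and $P'$ as the existential quantifications of expressible properties $P^*$, $P'^*$ over auxiliary index sets $\Wcal_0,\Wcal_0'$ respectively; by relabeling I may assume $\Wcal_0,\Wcal_0'$ are disjoint from each other and from $\Wcal$. Using (i), lift $P^*$ and $P'^*$ to the combined index set $\Wcal\uplus\Wcal_0\uplus\Wcal_0'$; by the expressible case of (iii) already established, their conjunction is expressible; existentially quantifying over $\Wcal_0\uplus\Wcal_0'$ then recovers $P\wedge P'$, exhibiting it as weakly expressible.

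Finally, for (iv), if $P$ is a weakly expressible $(G,(H_w)_{w\in\Wcal})$-property witnessed by an expressible $(G,(H_w)_{w\in\Wcal\uplus\Wcal_0})$-property $P^*$, and we further existentially quantify $P$ down to some subset $\Wcal''\subset\Wcal$, then the resulting $(G,(H_w)_{w\in\Wcal''})$-property equals the existential quantification of the expressible $P^*$ over the combined auxiliary set $(\Wcal\setminus\Wcal'')\uplus\Wcal_0$, by a Fubini-style rearrangement of quantifiers. Hence it is again weakly expressible. The main obstacle throughout is not mathematical depth but the careful bookkeeping of disjoint index sets; every step reduces to the same two-move strategy of ``cylinder extension, then rearrangement of existential quantifiers''.
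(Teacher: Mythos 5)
Your proposal is correct and follows essentially the same route as the paper: cylinder extension of the constraint sets for lifts, reuse of the same system with $x$ ranging over $G$ for pullbacks, concatenation plus lifting to a combined disjoint index set for conjunctions, and collapsing iterated existential quantifications for (iv). The only point both you and the paper treat briefly is that in the weakly expressible case of (ii) the coset-by-coset witnesses can be glued into a single global auxiliary function, which works by translation invariance of expressible properties.
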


\begin{proof}  We begin with the expressible case of (i).  Suppose that $P$ is a $(G, (H_w)_{w \in \Wcal})$-property formed by lifting an expressible $(G, (H_w)_{w \in \Wcal_1})$-property $P_1$.
By definition, we can find $M, J_1,\dots,J_M$, and $h_{i,j} \in G$ and $E_{i,j,1} \subset \prod_{w \in \Wcal_1} H_w$ for $1 \leq i \leq M$ and $1 \leq j \leq J_i$ such that a $(G, (H_w)_{w \in \Wcal_1})$-function $(\alpha_w)_{w \in \Wcal_1}$ obeys $P_1$ if and only if it solves the system
$$ \biguplus_{j=1}^{J_i} \left((\alpha_w(x + h_{i,j}))_{w \in \Wcal_1} + E_{i,j,1}\right) = \prod_{w \in \Wcal_1} H_w$$
for all $i=1,\dots,M$ and $x \in G$.  If we then define the lifted sets
$$ E_{i,j} \coloneqq E_{i,j,1} \times \prod_{w \in \Wcal \backslash \Wcal_1} H_w \subset \prod_{w \in \Wcal} H_w$$
for $i=1,\dots,M$ and $j=1,\dots,J_i$, we see from the definitions \ref{lift} and \ref{exp} that a $(G, (H_w)_{w \in \Wcal})$-function $(\alpha_w)_{w \in \Wcal}$ obeys $P$ if and only if
$$ \biguplus_{j=1}^{J_i} \left((\alpha_w(x + h_{i,j}))_{w \in \Wcal} + E_{i,j}\right) = \prod_{w \in \Wcal} H_w$$
for all $i=1,\dots,M$.  The claim follows.

For the weakly expressible case of (i), suppose that $P$ is a $(G, (H_w)_{w \in \Wcal})$-property formed by lifting a weakly expressible $(G, (H_w)_{w \in \Wcal_1})$-property $P_1$.  By Definition \ref{weak-express}, the weakly expressible $(G, (H_w)_{w \in \Wcal_1})$-property $P_1$
is associated to an expressible $(G, (H_w)_{w \in \Wcal_1 \uplus \Wcal_0})$-property $P^*_1$.  By relabeling, we may assume that $\Wcal_0$ is disjoint from $\Wcal$.  The lift $P^*$ of $P^*_1$ to $(G, (H_w)_{w \in \Wcal \uplus \Wcal_0})$ is then an expressible $(G, (H_w)_{w \in \Wcal \uplus \Wcal_0})$-property by the expressible case of (i), and can be seen to be associated to $P$ in the sense of   \defref{weak-express} by expanding out the definitions. Thus $P$ is weakly expressible as desired.

Now we establish the expressible case of (ii).   Suppose that $P$ is a $(G, (H_w)_{w \in \Wcal})$-property formed by pulling back an expressible $(G', (H_w)_{w \in \Wcal})$-property $P'$. 
By definition, we can find $M, J_1,\dots,J_M$, and $h_{i,j} \in G'$ and $E_{i,j} \subset \prod_{w \in \Wcal} H_w$ for $1 \leq i \leq M$ and $1 \leq j \leq J_i$ such that a $(G', (H_w)_{w \in \Wcal})$-function $(\alpha_w)_{w \in \Wcal}$ obeys $P'$ if and only if it solves the system
\begin{equation}\label{xm}
 \biguplus_{j=1}^{J_i} \left((\alpha_w(x + h_{i,j}))_{w \in \Wcal} + E_{i,j}\right) = \prod_{w \in \Wcal} H_w
 \end{equation}
for all $i=1,\dots,M$ and $x \in G'$.  By expanding out the definitions, we then see that a $(G, (H_w)_{w \in \Wcal})$-function $(\alpha_w)_{w \in \Wcal}$ obeys $P$ if and only if it obeys the same system of equations \eqref{xm} for $i=1,\dots,M$, but now with $x$ ranging over $G$ instead of $G'$.  Thus $P$ is also expressible as required.

For the weakly expressible case of (ii), suppose that $P$ is a $(G, (H_w)_{w \in \Wcal})$-property formed by pulling back a weakly expressible $(G', (H_w)_{w \in \Wcal})$-property $P'$.  By Definition \ref{weak-express}, $P'$ is associated to some expressible $(G', (H_w)_{w \in \Wcal \uplus \Wcal_0})$-property $(P')^*$.  If we let $P^*$ be the pullback of $(P')^*$ to $(G, (H_w)_{w \in \Wcal \uplus \Wcal_0})$, then $P^*$ is expressible by the expressible case of (ii), and can be seen to be associated to $P$ in the sense of Definition \ref{weak-express} by expanding out the definitions. Thus $P$ is weakly expressible as desired.

The expressible case of (iii) is trivial (and was already noted in Remark \ref{conjunction}.  Now suppose that $P, P'$ are weakly expressible $(G, (H_w)_{w \in \Wcal})$-properties.  By Definition \ref{weak-express}, $P$ is associated with an expressible $(G, (H_w)_{w \in \Wcal \uplus \Wcal_0})$-property $P^*$, and $P'$ is similarly associated with an expressible $(G, (H_w)_{w \in \Wcal \uplus \Wcal'_0})$-property $(P')^*$.  By relabeling, we can assume that $\Wcal_0$ and $\Wcal'_0$ are disjoint.  Let $Q^*$ be the $(G, (H_w)_{w \in \Wcal \uplus \Wcal_0 \uplus W'_0})$-property formed by lifting both $P^*$ and $(P')^*$ to $(G, (H_w)_{w \in \Wcal \uplus \Wcal_0 \uplus W'_0})$ and then taking their conjunction.  By the previously established parts of this lemma, $Q^*$ is expressible, and  can be seen to be associated to $P \wedge P'$ in the sense of Definition \ref{weak-express} by expanding out the definitions. Thus $P \wedge P'$ is weakly expressible as desired.

Finally, (iv) is immediate from Definition \ref{weak-express}, after observing that an existential quantification of an existential quantification is again an existential quantification.
\end{proof}

\section{A library of (weakly) expressible properties}\label{sec:library}

In view of Lemma \ref{closure}, a natural strategy to establish Theorem \ref{main-aperiod-tuple-weak} is to first build up a useful ``library'' of (weakly) expressible $(G, (H_w)_{w \in \Wcal})$-properties for various choices of $G$ and $(H_w)_{w \in \Wcal}$, with the aim of combining them via various applications of Lemma \ref{closure} to create more interesting (and ultimately, aperiodic) examples of weakly expressible properties (analogously to how one can create a complex computer program by combining more fundamental library routines together in various ways).  For instance, the clock in Example \ref{clock} can be regarded as one entry in this library, as can the property of being a periodized permutation as discussed in Example \ref{perm-expr}, or the property of differing by a constant as discussed in Example \ref{const}.  The final objective is to then ``program'' such a combination of properties in the library that necessarily generates an non-periodic function. In fact we will achieve this by ``programming'' a certain type of ``Sudoku puzzle'' that can be solved, but only in a non-periodic fashion.

\begin{example}  Consider the $(\Z^2, \Z/N\Z)$-property $P$ of a function $\alpha \colon \Z^2 \to \Z/N\Z$ being of the form $\alpha(x,y) = x+y+c$ for all $(x,y) \in \Z^2$ and some $c \in \Z/N\Z$.  This is equivalent to $\alpha$ being a clock along the direction $e_1=(1,0)$ and simultaneously being a clock along the direction $e_2=(0,1)$, in the sense of Example \ref{pull-clock}.  Thus this property $P$ is the conjunction of two pullbacks of the clock property; since we know from Example \ref{clock} that the clock property is expressible, we conclude from several applications of Lemma \ref{closure} that this property $P$ is also expressible.  However, this property is not aperiodic, and so does not complete the proof of Theorem \ref{main-aperiod-tuple-weak}.
\end{example}

\begin{example}  The $(\Z, (\Z/N\Z)_{w =1,2})$-property of two functions $\alpha_1,\alpha_2 \colon \Z \to \Z/N\Z$ being periodized permutations that differ by a constant is expressible, as can be seen from Lemma \ref{closure} after lifting Example \ref{perm-expr} twice and taking conjunctions of those lifts with Example \ref{const}.  Again, this property is not aperiodic, and so does not complete the proof of Theorem \ref{main-aperiod-tuple-weak}.
\end{example}

\subsection{Expressing linear constraints}

One basic property that we will add to our library is the ability to express linear constraints (up to constants) between different functions $\alpha_w$, which significantly generalizes Example \ref{const}.  The basic relation is

\begin{proposition}[Expressing constancy modulo a subgroup]\label{const-mod-subgrp}  Let $G$ be a finitely generated abelian group, let $H$ be a finite abelian group, and let $H'$ be a subgroup of $H$.  Then the $(G,H)$-property of a $(G,H)$-function $\alpha$ taking values in a single coset $c+H'$ of $H'$ (i.e., there exists $c \in H$ such that $\alpha(x) \in c+H'$ for all $x \in G$) is expressible.
\end{proposition}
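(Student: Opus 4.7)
The plan is to write this as a finite conjunction of functional equations, one for each generator of $G$. Choose generators $v_1, \dots, v_k$ of $G$ (possible since $G$ is finitely generated). The property ``$\alpha$ takes values in a single coset of $H'$'' is equivalent to the property that $\alpha(x+v_i) - \alpha(x) \in H'$ for every $x \in G$ and every $i=1,\dots,k$: one direction is immediate, and conversely, iterating this condition shows $\alpha(x) - \alpha(0) \in H'$ for every $x \in G$, so $\alpha$ lies in the coset $\alpha(0)+H'$.

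Next, I would encode each constraint $\alpha(x+v_i)-\alpha(x) \in H'$ as a single functional equation of the form \eqref{function-ex}, by writing
\[
(\alpha(x) + H') \uplus (\alpha(x+v_i) + (H \setminus H')) = H
\]
for all $x \in G$, with $J=2$, shifts $h_1=0, h_2=v_i$, and sets $E_1 = H'$, $E_2 = H \setminus H'$. The key step is to verify that this equation is equivalent to the cosets $\alpha(x)+H'$ and $\alpha(x+v_i)+H'$ coinciding. Indeed, the set $\alpha(x+v_i)+(H\setminus H')$ is precisely the complement of the coset $\alpha(x+v_i)+H'$ in $H$, so the partition condition forces $\alpha(x)+H' \supseteq H \setminus (\alpha(x+v_i)+H') = \alpha(x+v_i)+(H'^c$ complement inside $H)$, equivalently $\alpha(x+v_i)+H' \supseteq \alpha(x)+H'$; since both are cosets of $H'$ and hence of equal cardinality, they must be equal, giving $\alpha(x+v_i)-\alpha(x)\in H'$. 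Conversely, if these two cosets coincide, then $(\alpha(x)+H') \uplus (\alpha(x+v_i)+(H\setminus H')) = (\alpha(x+v_i)+H') \uplus (\alpha(x+v_i)+(H\setminus H')) = \alpha(x+v_i) + H = H$ disjointly, so the functional equation holds.

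Finally, I would conjoin the $k$ functional equations obtained above, one for each generator $v_i$; by Lemma~\ref{closure}(iii) (or directly from Definition~\ref{exp}, which allows several equations in a system), this conjunction is again expressible and, by the equivalence established in the first paragraph, cuts out exactly the $(G,H)$-functions that are constant modulo $H'$. No step here is particularly hard; the only thing that requires a little care is the coset-cardinality argument ensuring that the disjoint-cover condition forces \emph{equality} of cosets rather than mere containment, and the observation that the constant $c$ is recovered as $\alpha(0)$ (or any other value of $\alpha$) rather than being explicitly quantified in the functional equations.
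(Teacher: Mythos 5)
Your proposal is correct and follows essentially the same route as the paper: the paper also fixes generators $e_1,\dots,e_d$ of $G$ and uses the single functional equation $(\alpha(x)+H') \uplus (\alpha(x+e_i)+(H\setminus H')) = H$ for each generator, observing that this is equivalent to $\alpha(x)$ and $\alpha(x+e_i)$ lying in the same coset of $H'$. The only difference is that you spell out the coset-cardinality verification that the paper leaves implicit.
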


\begin{proof}  Let $e_1,\dots,e_d$ be a set of generators for $G$. Similarly to Example \ref{const}, we consider the functional equation
$$ (\alpha(x) + H') \uplus \left(\alpha(x+e_i) + (H \backslash H')\right) = H$$
for all $i=1,\dots,d$ and $x \in G$, and some unknown function $\alpha \colon G \to H$.  This equation can be equivalently expressed as
$$ \alpha(x) = \alpha(x+e_i) \Mod{H'},$$
that is to say $\alpha(x)$ and $\alpha(x+e_i)$ lie in the same coset of $H'$.  Since the $e_i$ generate $G$, this is equivalent to $\alpha$ lying in a single coset of $H'$, as claimed.
\end{proof}

We isolate two useful corollaries of this proposition:

\begin{corollary}[Expressing periodicity]\label{express-period}  Let $G$ be a finitely generated abelian group, let $H$ be a finite abelian group, and let $G'$ be a subgroup of $G$.  Then the $(G,H)$-property that a $(G,H)$-function $\alpha$ is $G'$-periodic in the sense that $\alpha(x+h) = \alpha(x)$ for all $x \in G$ and $h \in G'$, is expressible.
\end{corollary}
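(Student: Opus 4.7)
The plan is to realize $G'$-periodicity as a pullback of a constancy property on $G'$, so that the proof reduces to a clean invocation of \propref{const-mod-subgrp} combined with \lemref{closure}(ii), without any fresh functional-equation bookkeeping.

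First I note that $G'$ is itself finitely generated, since subgroups of finitely generated abelian groups are finitely generated.  Thus \propref{const-mod-subgrp} applies with $G'$ in place of $G$, $H$ as given, and the trivial subgroup $H' = \{0\}$, yielding that the $(G',H)$-property $P' \coloneqq $ \emph{``$\beta \colon G' \to H$ is constant''} is expressible.  Next I take the pullback of $P'$ from $(G',H)$ to $(G,H)$; by \lemref{closure}(ii) this pullback $P$ is again expressible.  Finally I unpack the definition of pullback: $\alpha \colon G \to H$ obeys $P$ iff, for every basepoint $x_0 \in G$, the shifted function $x' \mapsto \alpha(x_0 + x')$ is constant on $G'$.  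Evaluating this constant at $x' = 0$ identifies its value with $\alpha(x_0)$, so the condition is equivalent to $\alpha(x_0 + x') = \alpha(x_0)$ for all $x_0 \in G$ and all $x' \in G'$, which is exactly $G'$-periodicity of $\alpha$.  Hence $G'$-periodicity is expressible, as required.

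There is no real obstacle here: the argument is a short composition of tools already in place.  As a sanity check, an equivalent direct route is to fix a finite generating set $h_1,\dots,h_k$ of $G'$ and write, for each $i$, the single functional equation
$$ (\alpha(x) + \{0\}) \uplus (\alpha(x + h_i) + (H \setminus \{0\})) = H, $$
then take their conjunction via \lemref{closure}(iii).  Each equation forces $\alpha(x) = \alpha(x + h_i)$ for all $x \in G$, and since the $h_i$ generate $G'$ this conjunction is equivalent to $G'$-periodicity.  Either route gives the claim, but the pullback formulation is cleaner and isolates the role of the generating set inside the previously established proof of \propref{const-mod-subgrp}.
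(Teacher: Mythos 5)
Your proof is correct and is essentially identical to the paper's: both apply Proposition~\ref{const-mod-subgrp} with $G'$ in place of $G$ and $H'=\{0\}$ to express constancy on $G'$, then pull back to $(G,H)$ via Lemma~\ref{closure}(ii). The extra unpacking of the pullback definition and the alternative direct route are fine but add nothing beyond the paper's two-line argument.
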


\begin{proof}  From Proposition \ref{const-mod-subgrp} with $G$ replaced by $G'$ and $H'$ replaced by $\{0\}$, we see that the $(G',H)$ property of being a constant $(G',H)$-function is expressible.  Pulling back from $(G',H)$ to $(G,H)$ using Lemma \ref{closure}(ii), we obtain the claim.
\end{proof}

\begin{corollary}[Expressing linear constraints]\label{express-linear} Let $G$ be a finitely generated abelian group, let $\Z/N\Z$ be a cyclic group, and let $c_1,\dots,c_W \in \Z/N\Z$ be coefficients.  Then the $(G,(\Z/N\Z)_{w=1,\dots,W})$-property of a tuple $\alpha_1,\dots,\alpha_W \colon G \to \Z/N\Z$ of functions obeying the linear relation
\begin{equation}\label{ca}
c_1 \alpha_1(x) + \dots + c_W \alpha_W(x) = c
\end{equation}
for all $x \in G$ and some constant $c \in \Z/N\Z$, is expressible.
\end{corollary}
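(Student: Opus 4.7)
The plan is to reduce this corollary directly to Proposition \ref{const-mod-subgrp} by bundling the tuple $(\alpha_1,\dots,\alpha_W)$ into a single function valued in the product group. Recall that expressibility of a $(G,(\Z/N\Z)_{w=1,\dots,W})$-property is by definition the expressibility of the associated property of the combined function $\beta \colon G \to (\Z/N\Z)^W$ defined by $\beta(x) \coloneqq (\alpha_1(x),\dots,\alpha_W(x))$, so it suffices to express the property of $\beta$ in the language of functional equations on $G$ with target group $H \coloneqq (\Z/N\Z)^W$.

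Next, I would introduce the subgroup
\[ H' \coloneqq \{(a_1,\dots,a_W) \in H : c_1 a_1 + \dots + c_W a_W = 0\} \]
of $H$, which is the kernel of the group homomorphism $\phi \colon H \to \Z/N\Z$ given by $\phi(a_1,\dots,a_W) \coloneqq c_1 a_1 + \dots + c_W a_W$. Then the linear relation \eqref{ca} asserts that $\phi(\beta(x)) = c$ for all $x \in G$ and some $c \in \Z/N\Z$, which is exactly the statement that $\beta$ takes values in a single coset of $H'$ (namely any preimage under $\phi$ of the constant value $c$).

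By Proposition \ref{const-mod-subgrp} applied with $H = (\Z/N\Z)^W$ and the subgroup $H'$ just defined, the property of $\beta \colon G \to H$ taking values in a single coset of $H'$ is expressible. Translating this back through the identification of $\beta$ with the tuple $(\alpha_1,\dots,\alpha_W)$ yields the desired conclusion. There is no real obstacle here: the only substantive content is the observation that a $\Z/N\Z$-valued linear constraint with an arbitrary additive constant is precisely a coset condition in the product group, which matches exactly the hypothesis of Proposition \ref{const-mod-subgrp}.
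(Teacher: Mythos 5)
Your proposal is correct and follows essentially the same route as the paper: view the tuple as a single $(G,(\Z/N\Z)^W)$-function, note that \eqref{ca} is equivalent to this function lying in a single coset of the subgroup $H' = \{(a_1,\dots,a_W): c_1a_1+\dots+c_Wa_W=0\}$, and invoke Proposition \ref{const-mod-subgrp}. Your added observation that $H'$ is the kernel of the evaluation homomorphism and the cosets are its nonempty fibers is a correct (if implicit in the paper) justification of the equivalence.
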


\begin{proof}
    We can view $(\alpha_w)_{w=1,\dots,W}$ as a single $(G, (\Z/N\Z)^W)$-function.  The linear relation \eqref{ca} is then equivalent to this function lying in a single coset of the group
    $$ H' \coloneqq \{ (a_1,\dots,a_W) \in (\Z/N\Z)^W: c_1 a_1 + \dots + c_w a_w = 0 \}.$$
    The claim now follows from Proposition \ref{const-mod-subgrp}.
\end{proof}

\begin{remark} Note that Example \ref{const} is essentially the special case of Corollary \ref{express-linear} with $W=2$, $c_1=1$, and $c_2=-1$.  The presence of the constant $c$ in \eqref{ca} is unfortunately necessary due to the translation invariance mentioned in Remark \ref{trans-remark}.  We remark that a variant of Corollary \ref{express-linear} (in which one did not tile the whole group, and was thus able to set $c$ to zero) was implicitly used in our previous work \cite[\S 6]{GT2}.
\end{remark}

\begin{example}  Let $\Z/N\Z$ be a cyclic group, and consider the $(\Z^2, (\Z/N\Z)_{w=1,2,3})$-property of a triple of functions $\alpha_1,\alpha_2,\alpha_3 \colon \Z^2 \to \Z/N\Z$ obeying the properties
\begin{align*}
\alpha_1(x,y)  &= \alpha_1(x+1,y) \\
\alpha_2(x,y)  &= \alpha_2(x,y+1) \\
\alpha_3(x,y)  &= \alpha_3(x+1,y-1) \\
\alpha_1(x,y) + \alpha_2(x,y) &= \alpha_3(x,y)
\end{align*}
for all $(x,y) \in \Z^2$, namely, $\alpha_1,\alpha_2,\alpha_3$ are periodic along the directions $(1,0), (0,1), (1,-1)$ respectively, and that $\alpha_1+\alpha_2=\alpha_3$. Thus, this property is expressible.  It is not difficult to show that the solutions to this system of equations are given by $\alpha_1(x,y) = \phi(y) + c_1$, $\alpha_2(x,y) = \phi(x) + c_2$, $\alpha_3(x,y) = \phi(x+y) + c_3$ for some homomorphism $\phi \colon \Z \to \Z/N\Z$ and some constants $c_1,c_2,c_3 \in \Z/N\Z$.  Thus the property of $\alpha_1,\alpha_2,\alpha_3$ taking this form is expressible.  Applying existential quantification to eliminate the role of $\alpha_2, \alpha_3$, we conclude that the $(\Z^2, \Z/N\Z)$-property of a function $\alpha \colon \Z^2 \to \Z/N\Z$ taking the form $\alpha(x,y) = f(y)$ for some affine function $f \colon \Z \to \Z/N\Z$ (i.e., the sum of a homomorphism and a constant), is weakly expressible. This is our first example of a property which is weakly expressible, but which is not obviously expressible.
\end{example}

\section{Expressing boolean functions}\label{sec:boolean}

Thus far we have been considering properties of functions $\alpha_w \colon G \to H_w$ which can range over the entirety of a finite abelian group $H_w$.  In order to be able to express boolean operations (as in \cite[\S 5]{GT2}), we will need to start expressing properties of functions that take on only two values $\{a,b\}$ in a larger ambient group $H_w$ (which we will take to be a cyclic $2$-group $\Z/2^M\Z$).  To do this, we introduce the following definition.

\begin{definition}[Boolean function]  Let $G$ be a finitely generated abelian group, let $e$ be an element of $G$ of order $2$, let $\Z/2^M\Z$ be a cyclic $2$-group for some $M \geq 1$, and let $a,b$ be distinct elements of $\Z/2^M\Z$ of opposite parity (thus one of the $a,b$ is even and the other is odd).  A function $\alpha \colon G \to \Z/2^M\Z$ is \emph{$(e,\{a,b\})$-boolean} if it takes values in $\{a,b\}$, and furthermore obeys the alternating property
\begin{equation}\label{alternating}
\alpha(x + e) = a+b - \alpha(x)
\end{equation}
for all $x \in G$; i.e., for each $x \in G$, $\alpha(x)$ takes one of the values $a,b$, and $\alpha(x+e)$ takes the other value.  In particular, $\{a,b\}$ is equal to the image $\alpha(G)$ of $\alpha$.

A $(e,\{a,b\})$-boolean function $\alpha$ is said to be \emph{compatible} with a $(e,\{a',b'\})$-boolean function $\alpha'$ if $\{a',b'\}$ is a translate of $\{a,b\}$, or equivalently if the image $\alpha(G)$ of $\alpha$ is a translate of the image $\alpha'(G)$ of $\alpha'$.
\end{definition}

We restrict to $2$-groups $\Z/2^M\Z$ here because in later arguments it will be important to exploit the fact that all odd elements of such groups are invertible (with respect to the usual ring structure on cyclic groups), and in particular have order $2^M$ equal to the order of the group.  This will also be the main reason why we will work with ``$2$-adic Sudoku puzzles'' in later sections, as opposed to Sudoku puzzles in odd characteristic which are slightly easier to analyze.

\begin{proposition}[Expressing a single boolean function]\label{bool-expr}  Let $G$ be a finitely generated abelian group, let $e$ be an element of $G$ of order two, and let $\Z/2^M\Z$ be a cyclic $2$-group for some $M \geq 1$.  Then the $(G, \Z/2^M\Z)$-property of being $(e,\{a,b\})$-boolean for some distinct $a,b \in \Z/2^M\Z$ of opposite parity, is expressible.
\end{proposition}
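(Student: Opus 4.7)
My plan is to decompose the property of being $(e,\{a,b\})$-boolean for some distinct $a,b$ of opposite parity into three sub-properties, express each as a finite system of functional equations in the single function $\alpha$, and then take their conjunction using Lemma \ref{closure}(iii) (which applies equally to the single-function case). Concretely, I would claim that $\alpha\colon G \to \Z/2^M\Z$ is boolean in the required sense if and only if: (I) $\alpha(x)$ and $\alpha(x+e)$ have opposite parity, i.e., lie in different cosets of the index-$2$ subgroup $2H = \{0,2,\dots,2^M-2\}$ of $H = \Z/2^M\Z$; (II) the sum $\alpha(x) + \alpha(x+e)$ is independent of $x$ (and forced to be odd by (I)); and (III) the image $\alpha(G)$ has cardinality at most $2$. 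The ``only if'' direction is immediate, while the ``if'' direction follows by noting that (I)+(II)+(III) together force $\alpha(G) = \{a, c-a\}$ for any $a\in\alpha(G)$, where $c$ is the common value of $\alpha(x)+\alpha(x+e)$, so that $a + (c-a) = c$ is odd by (I), giving opposite parity.

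I would then express each condition separately. Condition (I) is captured by the single functional equation
\[
(\alpha(x) + 2H) \uplus (\alpha(x+e) + 2H) = H,
\]
since this partition holds precisely when the $2H$-cosets of $\alpha(x)$ and $\alpha(x+e)$ are distinct. For condition (II), it suffices (by reducing constancy of $\phi(x)\coloneqq\alpha(x)+\alpha(x+e)$ to constancy along a finite generating set) to impose, for each generator $h$ of $G$, the $4$-point identity $\alpha(x)+\alpha(x+e) = \alpha(x+h)+\alpha(x+h+e)$; I would realize each such identity as a single functional equation with the four shifts $\{0, e, h, h+e\}$ and sets $E_0, E_1, E_2, E_3 \subset H$ carefully chosen so that the partition equation is equivalent to the identity, in the spirit of the diagonal-subgroup construction of Example \ref{const}. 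For condition (III), once (I) and (II) hold, the unordered pair $\{\alpha(x), \alpha(x+e)\} = \{\alpha(x), c-\alpha(x)\}$ is determined by $\alpha(x)$; I would then impose, for each generator $h$, the $3$-point condition $\alpha(x+h)\in\{\alpha(x),\alpha(x+e)\}$, which by induction along the generators forces $\alpha(G) \subseteq \{\alpha(x_0),\alpha(x_0+e)\}$ for any fixed base point $x_0$, yielding $|\alpha(G)|\le 2$. Each such $3$-point condition I would encode as a further functional equation with shifts $\{0, e, h\}$ and appropriate sets.

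The main obstacle lies in the explicit combinatorial design of the sets $E_j$ in the functional equations realizing conditions (II) and (III): these must be chosen so that the underlying partition equation is equivalent to a specific affine or subset relation in $H^4$ or $H^3$, which is genuinely more delicate than the coset-constancy constructions of Proposition \ref{const-mod-subgrp} and Corollary \ref{express-linear}, since the constraints of interest (the hyperplane $\{u+v=w+z\}\subset H^4$ for (II), and the set $\{(u,v,w)\colon w\in\{u,v\}\}\subset H^3$ for (III)) cannot in general be realized as partitions using only two shifts. The essential input making the construction feasible is the $2$-adic structure of $H = \Z/2^M\Z$: the parity subgroup $2H$ has index $2$ and all odd elements of $H$ are units, so that the relevant subsets have the sizes and translation-stabilizers needed for the partition equations to be built explicitly, analogously to (but more elaborately than) the argument of Proposition \ref{const-mod-subgrp}.
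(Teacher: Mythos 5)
Your decomposition is sound and your condition (I) is word-for-word the paper's first functional equation, but the proof has a genuine gap at exactly the point you flag as ``the main obstacle'': you never construct the functional equations for (II) and (III), and the specific relations you propose to encode are in fact \emph{not} realizable as partition equations of the form \eqref{function-ex}. For (III): in a three-shift equation $(u+E_1)\uplus(v+E_2)\uplus(w+E_3)=H$, the set of admissible $w$ for a fixed admissible $(u,v)$ is either empty or a full coset of the stabilizer subgroup $\{t\in H: t+E_3=E_3\}$; your target fibre $\{u,v\}$ with $u,v$ of opposite parity would force that stabilizer to equal $\{0,2^{M-1}\}$ and $v-u=2^{M-1}$, which fails for $M\ge 2$ (e.g.\ $\{0,1\}$ is not a coset of any subgroup of $\Z/2^M\Z$). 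For (II): the hyperplane $\{(u,v,w,z):u+v=w+z\}$ projects onto all of $H^2$ in the first two coordinates, so realizing it would require $(u+E_1)\cap(v+E_2)=\emptyset$ for \emph{every} pair $u,v$, forcing $E_1$ or $E_2$ to be empty; no nontrivial four-shift partition equation carves out exactly this hyperplane. (Corollary \ref{express-linear} evades this obstruction only because it works with tuples of functions valued in a product group, which is not available for a single function $\alpha\colon G\to\Z/2^M\Z$.)

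The paper's resolution is to express a condition \emph{stronger} than your (II) and (III) whose conjunction with the parity equation has well-behaved fibres: for each generator $e_i$ of $G$ it imposes
$$\biguplus_{y=x,\,x+e}\Bigl((\alpha(y+e_i)+\{0\})\uplus\bigl(\alpha(y)+(2\Z/2^M\Z\setminus\{0\})\bigr)\Bigr)=\Z/2^M\Z,$$
i.e.\ it punctures each of the two parity cosets $\alpha(x)+2\Z/2^M\Z$ and $\alpha(x+e)+2\Z/2^M\Z$ at the points $\alpha(x)$, $\alpha(x+e)$ and plugs the holes with the singletons $\{\alpha(x+e_i)\}$, $\{\alpha(x+e+e_i)\}$. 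Comparing with the parity equation yields the pair identity $\{\alpha(x+e_i),\alpha(x+e+e_i)\}=\{\alpha(x),\alpha(x+e)\}$, and since the even and odd members of each pair are distinguished by parity, the even value and the odd value are each invariant under every generator, hence constant. Your argument would go through verbatim if you replaced your proposed equations for (II) and (III) by this single family, but as written the decisive construction is missing and the shapes you commit to cannot work.
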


\begin{proof}  Let $e_1,\dots,e_r$ be a set of generators for $G$, and consider the $(G, \Z/2^M\Z)$-property of a function $\alpha \colon G \to \Z/2^M\Z$ obeying the functional equation
\begin{equation}\label{funct-1}
\left(\alpha(x) + 2\Z/2^M\Z\right) \uplus \left(\alpha(x+e) + 2\Z/2^M\Z\right) = \Z/2^M\Z
\end{equation}
for all $x \in G$, as well as the equations
\begin{equation}\label{funct-2}
\biguplus_{y = x, x+e} \left((\alpha(y+e_i) + \{0\}) \uplus (\alpha(y) + (2\Z/2^M\Z \backslash \{0\}))\right) = \Z/2^M\Z
\end{equation}
for all $x \in G$ and $i=1,\dots,r$.

Suppose that $\alpha$ obeys this system \eqref{funct-1}, \eqref{funct-2}.  Since $2\Z/2^M\Z$ is an index two subgroup of $\Z/2^M\Z$, we see that for each $x$, the pair $(\alpha(x), \alpha(x+e))$ must consist of an even element $a(x)$ and an odd element $b(x)$ of $\Z/2q\Z$.  On the other hand, from comparing \eqref{funct-2} with \eqref{funct-1} we have for each $x \in G$ and $i=1,\dots,r$ that
$$ (\alpha(x)+\{0\}) \uplus (\alpha(x+e)+\{0\}) = (\alpha(x+e_i) + \{0\}) \uplus (\alpha(x+e+e_i) + \{0\})$$
or equivalently that $a(x) = a(x+e_i)$ and $b(x)=b(x+e_i)$.  Since the $e_1,\dots,e_r$ generate $G$, we conclude that $a(x)=a$, $b(x)=b$ are constant in $x$, and $\alpha$ is $(e,\{a,b\})$-boolean.  Conversely, if $\alpha$ is $(e,\{a,b\})$-boolean, we can reverse the above arguments and conclude the functional equations \eqref{funct-1}, \eqref{funct-2}.  The claim follows.
\end{proof}

Let $G$ be a finitely generated abelian group, let $e$ be an element $G$ of order two, let $\Z/2q\Z$ be a cyclic group of even order, and let $W \geq 1$.  
By the above proposition and Lemma \ref{closure}, one can express the $(G, (\Z/2q\Z)_{w=1,\dots,W})$-property of a tuple $\alpha_1,\dots,\alpha_W \colon G \to \Z/2q\Z$ of functions being such that each $\alpha_i$ is $(e,\{a_i,b_i\})$-periodic for some $a_i, b_i \in \Z/2q\Z$ of different parity.  However, this property does not force the boolean functions to be compatible; in other words, it does not require that the $\{a_i,b_i\}$ are translates of each other. This is a new difficulty that was not present in our previous work \cite{GT2}, where we could enforce this compatibility by only tiling a subset of $H$ rather than the full group $H$.  In our context, the desired compatibility will be achieved with the assistance of the following elementary lemma.

\begin{lemma}[An equation to force boolean compatibility]\label{force-compat}  Let $\Z/2^M\Z$ be a cyclic $2$-group for some $M \geq 2$, and let $\{a,b\}, \{c,d\}, \{f,g\}, \{h',k'\}, \{h'',k''\}$ be pairs of elements of $\Z/2^M\Z$ of different parity.  Let $z \in \Z/2^M\Z$ be such that
\begin{equation}\label{z-sum}
(a+b) + (h'+k') + (h''+k'') = 2(c+d) + (f+g) + 2z.
\end{equation}
Suppose also that for any triple $(\alpha,\tau,\tau') \in \{a,b\} \times \{h',k'\} \times \{h'',k''\}$ there exists $(\beta,\gamma) \in \{c,d\}\times\{f,g\}$ solving the equation
\begin{equation}\label{alphatau-eq}
\alpha + \tau + \tau' = 2\beta + \gamma + z.
\end{equation}
Then the sets $\{a,b\}, \{h',k'\}, \{h'',k''\}$ are translates of each other.
\end{lemma}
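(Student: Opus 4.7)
Set $D \coloneqq b - a$, $D' \coloneqq k' - h'$, $D'' \coloneqq k'' - h''$, $E \coloneqq g - f$, and $F \coloneqq d - c$; by hypothesis each is an odd element of $\Z/2^M\Z$. The conclusion is equivalent to $D \equiv \pm D' \pmod{2^M}$ and $D \equiv \pm D'' \pmod{2^M}$. My plan is to first use a parity argument to pin down $\gamma$ and $\beta$ uniquely given $(\alpha,\tau,\tau')$, then read off constraints on $D$, $D'$, $D''$ by varying one coordinate at a time in \eqref{alphatau-eq}.

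Reducing \eqref{alphatau-eq} modulo $2$ shows that the parity of $\gamma$ is a fixed $\Z/2\Z$-linear function of the parities of $(\alpha,\tau,\tau')$ (with an offset from $z$). Since $f, g$ have opposite parities, $\gamma$ is therefore uniquely determined by the triple $(\alpha,\tau,\tau')$, and moreover flipping any one input to the other member of its pair toggles $\gamma$ between $f$ and $g$. Once $\gamma$ is fixed, $\beta$ is forced as the unique element of $\{c,d\}$ with $2\beta = \alpha+\tau+\tau'-\gamma-z$; uniqueness is guaranteed by $2c \neq 2d$ in $\Z/2^M\Z$ (which uses $M \geq 2$ and $F$ odd).

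Now fix $(\tau,\tau')$ and subtract the equation at $\alpha = a$ from the one at $\alpha = b$: writing $\Delta\beta$ and $\Delta\gamma$ for the corresponding differences in $\beta$ and $\gamma$,
\[
D \;=\; 2\,\Delta\beta + \Delta\gamma, \qquad \Delta\beta \in \{0,\pm F\}, \quad \Delta\gamma \in \{\pm E\},
\]
with $\Delta\gamma$ always nonzero by the toggling property. Since both signs of $\Delta\gamma$ occur as $(\tau,\tau')$ varies, both
\[
D - E \in \{-2F,\ 0,\ 2F\} \qquad \text{and} \qquad D + E \in \{-2F,\ 0,\ 2F\}
\]
hold in $\Z/2^M\Z$. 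Writing $D - E = 2kF$ and $D + E = 2\ell F$ with $k, \ell \in \{-1,0,1\}$ and using that $D, E, F$ are odd, a short parity case check forces $k + \ell, \ell - k \in \{\pm 1\}$, and the four admissible $(k,\ell)$ then yield
\[
D \;=\; (k+\ell)\,F + \delta \cdot 2^{M-1}, \qquad E \;=\; (\ell-k)\,F + \delta \cdot 2^{M-1}
\]
for some common bit $\delta \in \{0,1\}$ (the bit must agree in both formulas, because $D - E = 2kF$ is required to hold modulo $2^M$, not merely modulo $2^{M-1}$).

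Running the same argument with $\tau$ and with $\tau'$ playing the role of $\alpha$ yields analogous representations $D' = \pm F + \delta' \cdot 2^{M-1}$ and $D'' = \pm F + \delta'' \cdot 2^{M-1}$, each paired with a formula $E = \pm F + \delta' \cdot 2^{M-1}$ or $E = \pm F + \delta'' \cdot 2^{M-1}$. For $M \geq 3$, comparing the three expressions for the common $E$ forces $\delta = \delta' = \delta''$: any mismatch would put $2F$ in $\{0,\pm 2^{M-1}\}$ modulo $2^M$, which is impossible for odd $F$ when $M \geq 3$. Hence $D, D', D''$ all lie in the two-element set $\{F + \delta \cdot 2^{M-1},\ -F + \delta \cdot 2^{M-1}\}$, whose two elements are negatives of each other modulo $2^M$, so $D' \in \{D, -D\}$ and $D'' \in \{D, -D\}$, as required. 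The edge case $M=2$ is immediate: in $\Z/4\Z$ every opposite-parity pair has difference in $\{\pm 1\} = \{1,3\}$, so any two such pairs are already translates of each other.

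The main obstacle I anticipate is the modular bookkeeping that sharpens the conclusion from modulo $2^{M-1}$ (which is immediate from the structure of the equations) to modulo $2^M$; the critical observation is that the extra bit $\delta$ in the representation of $D$ is tied to the analogous bit in the representation of $E$, so tracking the single element $E$ across the three parallel derivations forces a common $\delta$, and that is what collapses $D, D', D''$ into the two-element set $\pm F + \delta \cdot 2^{M-1}$.
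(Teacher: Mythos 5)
Your proof is correct, and it takes a genuinely different route from the paper's. The paper translates each pair to normalize $a=h'=h''=0$ and $c=f=z=0$, deduces from \eqref{alphatau-eq} that $b,k',k''$ all lie in the two-element set $\{g, g+2d\}$, and then invokes the sum identity \eqref{z-sum} to dispose of the two non-trivial distributions of $b,k',k''$ over that set. You instead work throughout with the differences $D,D',D'',E,F$, use the mod-$2$ reduction of \eqref{alphatau-eq} to pin down $\gamma$ and obtain the toggling structure, and then resolve the mod-$2^{M-1}$ ambiguity by observing that the extra bit $\delta$ in the representation of each of $D,D',D''$ is locked to the bit in the corresponding representation of the single element $E=g-f$, which synchronizes all three. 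I checked the key steps — both signs of $\Delta\gamma$ do occur as the other two coordinates vary; $\delta_1=\delta_2$ does follow from requiring $D-E=2kF$ exactly modulo $2^M$; and $2F\notin\{0,\pm 2^{M-1}\}$ for odd $F$ and $M\geq 3$ — and the $M=2$ case is indeed vacuous since all opposite-parity pairs in $\Z/4\Z$ are translates. A notable feature of your argument is that it never uses the hypothesis \eqref{z-sum}: you prove the (true) stronger statement that \eqref{alphatau-eq} alone forces the conclusion. What the paper's approach buys is brevity via normalization plus a two-case check; what yours buys is the stronger conclusion and a mechanism (tracking the shared quantity $E$ across three parallel derivations) that makes transparent exactly where the passage from mod $2^{M-1}$ to mod $2^M$ information comes from.
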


\begin{proof}  Observe that we can translate any of the pairs $\{a,b\}, \{h',k'\}, \{h'',k''\}$ by some shift in $\Z/2^M\Z$, so long as we also shift $z$ by the same shift.  So we may normalize $a=h'=h''=0$.  By \eqref{alphatau-eq} we may then find $(\beta,\gamma) \in \{c,d\}\times\{f,g\}$ such that $0 = 2\beta+\gamma+z$.  By shifting $\{c,d\}$ by $-\beta$, $\{f,g\}$ by $-\gamma$, and replacing $z$ with $0$, we may thus also normalize $\beta=\gamma=z=0$, so without loss of generality $c=f=z=0$.  Thus we now have
\begin{equation}\label{bkk-eq}
b + k' + k'' = 2d + g
\end{equation}
and for any triple $(\alpha,\tau,\tau') \in \{0,b\} \times \{0,k'\} \times \{0,k''\}$ there exists $(\beta,\gamma) \in \{0,d\}\times\{0,g\}$ such that
$$ \alpha+\tau+\tau' = 2 \beta + \gamma.$$
In particular
$$ b, k', k'' \in \{ 0, 2d, g, g+2d\}.$$
By the hypothesis of distinct parities, $b,k',k'',d,g$ are all odd.  Thus in fact we must have
$$ b, k', k'' \in \{ g, g+2d\}.$$
If $b=k'=k''$ then $\{0,b\}, \{0,k'\}, \{0,k''\}$ are translates of each other as desired.  There are only two remaining cases:
\begin{enumerate}
    \item If two of the $b,k',k''$ are equal to $g$ and the third is equal to $g+2d$, then from \eqref{bkk-eq} we have $3g+2d=2d+g$, which is absurd since $g$ is odd and $M \geq 2$.
    \item If one of the $b,k',k''$ is equal to $g$ and the other two are equal to $g+2d$, then from \eqref{bkk-eq} we have $3g+4d=2d+g$, so in particular $g+2d = -g$ and so $\{0,g\}$ and $\{0,g+2d\}$ are translates of each other.  Thus $\{0,b\}, \{0,k'\}, \{0,k''\}$ are translates of each other as desired.     
\end{enumerate}
\end{proof}

\begin{definition}[Compatible boolean property]\label{compbool}
Let $G$ be a finitely generated abelian group, let $e, e', e''$ be elements of $G$ that generate a copy of $(\Z/2\Z)^3$ (so that $e,e',e''$ are of order $2$ and linearly independent over $\Z/2\Z$), let $\Z/2^M\Z$ be a cyclic $2$-group for some $M \geq 2$, and let $W \geq 1$. 
     We say that a tuple $(\alpha_w)_{w=1,\dots,W}$ of functions $\alpha_1,\dots,\alpha_W \colon G \to \Z/2^M\Z$ obeys the \emph{compatible boolean property} $P$ (with parameters $e,e',e''$) if  each $\alpha_i$ is $\langle e', e''\rangle$-periodic (thus $\alpha_i(x+e')=\alpha_i(x+e'')=\alpha_i(x)$ for all $x \in G$) and $(e,\{a_i,b_i\})$-boolean for some $a_i, b_i \in \Z/2^M\Z$ of different parity, and additionally that the $\alpha_i$ are compatible (i.e., the $\{a_i,b_i\}$ are translates of each other).
\end{definition}

We can exploit \lemref{force-compat} as follows.

\begin{proposition}[Expressing multiple compatible boolean functions]\label{multi-express}   The compatible boolean property $P$ is a weakly expressible $(G, (\Z/2^M\Z)_{w=1,\dots,W})$-property.
\end{proposition}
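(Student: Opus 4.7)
The plan is to combine our library (Proposition \ref{bool-expr}, Corollary \ref{express-period}, Corollary \ref{express-linear}) with Lemma \ref{force-compat} via existential quantification (Lemma \ref{closure}(iv)). First, by combining Proposition \ref{bool-expr} and Corollary \ref{express-period} via the lifting and conjunction parts of Lemma \ref{closure}, the property that each $\alpha_w$ is $\langle e',e''\rangle$-periodic and $(e,\{a_w,b_w\})$-boolean (for some unspecified $a_w,b_w$ of opposite parity) is already expressible. It therefore suffices to add on top of this a weakly expressible condition forcing the images $\{a_w,b_w\}$ to be translates of each other; by transitivity, it suffices to enforce pairwise compatibility of $\{a_1,b_1\}$ with $\{a_i,b_i\}$ for each $i=2,\dots,W$.

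For each such $i$, we existentially introduce: an $(e,\cdot)$-boolean, $\langle e',e''\rangle$-periodic function $\tau$ whose image is forced to equal $\{a_1,b_1\}$; an $(e',\cdot)$-boolean, $\langle e,e''\rangle$-periodic function $\tau_i'$ whose image is forced to equal $\{a_i,b_i\}$; an $(e'',\cdot)$-boolean, $\langle e,e'\rangle$-periodic function $\tau_i''$ whose image is forced to equal $\{a_i,b_i\}$; and two further $(e,\cdot)$-boolean, $\langle e',e''\rangle$-periodic auxiliary functions $\beta_i,\gamma_i$ with arbitrary images. The boolean and periodic structure of each auxiliary is expressible by Proposition \ref{bool-expr} and Corollary \ref{express-period}, and via Corollary \ref{express-linear} we impose the single linear equation
\[ \tau(x)+\tau_i'(x)+\tau_i''(x) = 2\beta_i(x)+\gamma_i(x)+c_i \qquad (x\in G) \]
for some constant $c_i\in \Z/2^M\Z$ (playing the role of $z$ in Lemma \ref{force-compat}). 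Conjoining these conditions over $i$ via Lemma \ref{closure}(iii) and existentially quantifying away all of the auxiliaries via Lemma \ref{closure}(iv) yields the claimed weak expressibility.

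The crucial use of the hypothesis that $e,e',e''$ generate a copy of $(\Z/2\Z)^3$ is the following: because $\tau,\tau_i',\tau_i''$ alternate in three distinct elementary directions while each remains periodic in the other two, the triple $(\tau(x+\delta),\tau_i'(x+\delta),\tau_i''(x+\delta))$ independently flips in each coordinate as $\delta$ ranges over the eight elements of $\langle e,e',e''\rangle\cong (\Z/2\Z)^3$, and therefore traverses all eight elements of $\{a_1,b_1\}\times \{a_i,b_i\}\times \{a_i,b_i\}$. Combined with our functional equation evaluated at these eight translates, this genuinely verifies the hypothesis of Lemma \ref{force-compat}, which then forces $\{a_1,b_1\}$ and $\{a_i,b_i\}$ to be translates of each other. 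The main technical obstacle is enforcing the image-matching constraints that ``$\tau$ has image $\{a_1,b_1\}$'' and ``$\tau_i',\tau_i''$ have image $\{a_i,b_i\}$'' for auxiliaries that are boolean in directions other than $e$, since our library is most naturally formulated for the $e$ direction; this can be handled by combining Corollary \ref{express-linear} (to equate the sums $a+b$ across the relevant functions via an identity of the form $\tau(x)+\tau(x+e')=\alpha_1(y)+\alpha_1(y+e)$) with Proposition \ref{const-mod-subgrp} (to further restrict the pair to a common coset of the two-element subgroup generated by $b-a$).
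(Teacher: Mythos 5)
Your high-level strategy is the right one (realize all eight sign patterns by making three auxiliaries alternate in independent directions of $\langle e,e',e''\rangle$, feed this into Lemma~\ref{force-compat}, and existentially quantify the auxiliaries away), but there is a genuine gap at the step you yourself flag as ``the main technical obstacle'': the image-matching constraints on $\tau,\tau_i',\tau_i''$ are not achievable, and your argument collapses without them. By Remark~\ref{trans-remark}, any expressible property is invariant under \emph{independent} vertical translations $\alpha_w \mapsto \alpha_w + u_w$ of each function, so no expressible property can force the image of $\tau$ to \emph{equal} the image of $\alpha_1$ (as opposed to being a translate of it) --- and forcing the images even to be translates of each other is precisely the content of the proposition, so assuming it for the auxiliaries is circular. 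Your proposed repair does not work: Corollary~\ref{express-linear} only asserts that a linear combination equals \emph{some} constant $c$, and since $\tau(x)+\tau(x+e')$ and $\alpha_1(y)+\alpha_1(y+e)$ are each already constant, the identity you write down is vacuous and equates nothing; and since $b-a$ is odd it generates all of $\Z/2^M\Z$, so ``the two-element subgroup generated by $b-a$'' does not exist, and in any case Proposition~\ref{const-mod-subgrp} requires the subgroup $H'$ to be fixed in advance rather than depending on the unknown values $a,b$. Without the image-matching, Lemma~\ref{force-compat} applied to your equation only shows that the images of $\tau$, $\tau_i'$, $\tau_i''$ are translates of \emph{each other}, which says nothing about $\alpha_1(G)$ or $\alpha_i(G)$.

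The paper's fix is a different design of the linear equation: it puts $\alpha_i$ \emph{itself} (not a proxy $\tau$) into the constraint, namely $\alpha_i(x)+\tau'(x)+\tau''(x) = 2\beta_i(x)+\gamma_i(x)+z_i$, with the images of $\tau',\tau''$ left completely unconstrained. The independence of directions is arranged by making all functions $(e,\cdot)$-boolean but giving $\tau'$ period lattice $\langle e+e',e''\rangle$ and $\tau''$ period lattice $\langle e',e+e''\rangle$, so that shifts by $e'$, $e''$, $e+e'+e''$ flip exactly one of $\tau',\tau'',\alpha_i$ respectively; keeping everything boolean in the same direction $e$ also lets one derive the side condition \eqref{z-sum} by summing over $x$ and $x+e$. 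Lemma~\ref{force-compat} then forces $\alpha_i(G)$, $\tau'(G)$, $\tau''(G)$ to be translates of one another for each $i$, and since the \emph{same} pair $\tau',\tau''$ is used for all $i$, transitivity gives the compatibility of the $\alpha_i$. Finally, note that you have not addressed the converse direction needed for weak expressibility: one must also show that every tuple obeying $P$ extends to a tuple obeying the auxiliary system, which the paper does by normalizing the common image to $\{0,b\}$ and extracting $\beta_i,\gamma_i$ from the binary expansion of $\alpha_i+\tau'+\tau''$ in $\{0,b,2b,3b\}$.
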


\begin{proof}  For sake of notation we just demonstrate this for $W=2$; the general case is similar, and in any event follows from the $W=2$ case by applying Lemma \ref{closure} in a similar spirit to Example \ref{equal-ex}.

Let $\alpha_1, \alpha_2 \colon G \to \Z/2^M\Z$ be functions.  
We introduce some auxiliary functions
$$ \beta_1, \beta_2, \gamma_1, \gamma_2, \tau', \tau'' \colon G \to \Z/2^M\Z.$$
Consider the $(G, (\Z/2^M\Z)_{w=1,\dots,8})$-property $P^*$ that the tuple $(\alpha_1,\alpha_2,\beta_1,\beta_2,\gamma_1,\gamma_2,\tau)$ obeys the following properties for $i=1,2$:
\begin{itemize}
    \item[(i)] $\alpha_i$ is $\langle e',e''\rangle$-periodic and $(e,\{a_i,b_i\})$-boolean for some $a_i,b_i \in \Z/2^M\Z$ of different parity.
    \item[(ii)] $\beta_i$ is $(e,\{c_i,d_i\})$-boolean for some $c_i,d_i \in \Z/2^M\Z$ of different parity.
    \item[(iii)] $\gamma_i$ is $(e,\{f_i,g_i\})$-boolean for some $f_i,g_i \in \Z/2^M\Z$ of different parity.
    \item[(iv)] $\tau'$ is $\langle e+e', e''\rangle$-periodic and $(e,\{h',k'\})$-boolean for some $h',k' \in \Z/2^M\Z$ of different parity.
    \item[(v)] $\tau''$ is $\langle e', e+e''\rangle$-periodic and $(e,\{h'',k''\})$-boolean for some $h'',k'' \in \Z/2^M\Z$ of different parity.
    \item[(vi)] There is a constant $z_i \in \Z/2^M\Z$ such that $\alpha_i(x) + \tau'(x) + \tau''(x) = 2 \beta_i(x) + \gamma_i(x) + z_i$ for all $x \in G$.
\end{itemize}
From several applications of Corollary \ref{express-period}, Corollary \ref{express-linear},  Proposition \ref{bool-expr}, and Lemma \ref{closure} we already know that $P^*$ is expressible.  To conclude the proposition, it suffices to show that the compatible boolean property $P$ is the existential quantification of $P^*$.

We first show that any pair $(\alpha_1,\alpha_2)$ obeying the compatible boolean property $P$ can be lifted to a octuplet $(\alpha_1,\alpha_2,\beta_1,\beta_2,\gamma_1,\gamma_2,\tau,\tau')$ obeying $P^*$.  By hypothesis and \defref{compbool}, each $\alpha_i$ is already $\langle e', e'' \rangle$-periodic and $(e,\{a_i,b_i\})$-boolean, where $a_i,b_i \in \Z/2^M\Z$ are of different parity and with $\alpha_1,\alpha_2$ compatible.  By applying independent translations to the compatible boolean functions $\alpha_1,\alpha_2$ (which we can do by Remark \ref{trans-remark}), we may normalize $\{a_1,b_1\}=\{a_2,b_2\} = \{0,b\}$ for some odd $b \in \Z/2^M\Z$.  Next, let $\tau'$ be an arbitrary $\langle e+e', e''\rangle$-periodic and $(e,\{0,b\})$-boolean function; such a function can be constructed by arbitrarily partitioning $G$ into cosets $x + \langle e,e',e''\rangle$ with a marked point $x$ and then setting 
$$\tau(x + re + se' + te'') = b 1_{r=s}$$
on each such coset for $r,s,t \in \Z/2\Z$.  Similarly we can let $\tau''$ be an arbitrary $\langle e', e+e''\rangle$-periodic and $(e,\{0,b\})$-boolean function.
For each $i=1,2$, the function $\alpha_i+\tau'+\tau''$ then takes values in $\{0, b, 2b, 3b\}$, and obeys the alternating property
$$ (\alpha_i+\tau'+\tau'')(x+e) = 3b - (\alpha_i+\tau'+\tau'')(x)$$
for all $x \in G$.  Note also that the quantities $0,b,2b,3b$ are all distinct since $b$ is odd and $M \geq 2$.  By binary expansion, we may thus decompose
$$ (\alpha_i+\tau'+\tau'')(x) = 2 \beta_i(x) + \gamma_i(x)$$
for some unique functions $\beta_i, \gamma_i \colon G \to \{0,b\}$.  It is easy to verify that these functions are $(e,\{0,b\})$-boolean, and so the octuplet $(\alpha_1,\alpha_2,\beta_1,\beta_2,\gamma_1,\gamma_2,\tau,\tau')$ obeys $P^*$ as required (with $z_i=0$ in (vi)).

Conversely, suppose that we have an octuplet $(\alpha_1,\alpha_2,\beta_1,\beta_2,\gamma_1,\gamma_2,\tau,\tau')$ obeying property $P^*$.  Applying (vi) to $x$ and $x+e$ and summing, we have
$$ \sum_{y = x, x+e} (\alpha_i(y) + \tau'(y) + \tau''(y)) = \sum_{y = x,x+e} (2\beta_i(y) + \gamma_i(y)) + 2z_i$$
for any $x \in G$ and $i=1,2$.  Using the boolean nature of the functions $\alpha_1,\alpha_2,\beta_1,\beta_2,\gamma_1,\gamma_2,\tau,\tau'$ in the direction $e$, we conclude that
$$ (a_i+b_i) + (h'+k') + (h''+k'') = 2(c_i+d_i) + (f_i+g_i) + 2z_i$$
for $i=1,2$.

Let $i=1,2$.  By (i), (iv), (v), we see that for any $x \in G$, the triple $(\alpha_i(x), \tau'(x), \tau''(x))$ takes values in the eight-element set $\{a_i,b_i\} \times \{h',k'\} \times \{h'',k''\}$.  Furthermore, shifting $x$ by $e'$ changes the value of $\tau'(x)$ but not $\alpha(x),\tau''(x)$; shifting $x$ by $e''$ changes the value of $\tau''(x)$ but not $\alpha(x), \tau'(x)$; and shifting $x$ by $e+e'+e''$ changes the value of $\alpha_i(x)$ but not $\tau'(x),\tau''(x)$.  We conclude that all eight of the elements of $\{a_i,b_i\} \times \{h',k'\} \times \{h'',k''\}$ are actually representable in the form $(\alpha_i(x), \tau'(x), \tau''(x))$ for some $x \in G$.  By (vi), we conclude that every element $(\alpha_i, \tau, \tau')$ of $\{a_i,b_i\} \times \{h',k'\} \times \{h'',k''\}$ has a representation $\alpha_i + \tau + \tau' = 2\beta_i + \gamma_i + z_i$ for some $(\beta_i,\gamma_i) \in \{c_i,d_i\} \times \{f_i,g_i\}$.  We can now apply Lemma \ref{force-compat} to conclude that $\{a_i,b_i\}, \{h',k'\}, \{h'',k''\}$ must be translates of each other. Thus, both $\alpha_1,\alpha_2$ are compatible with the $\tau',\tau''$.  By transitivity, this implies that $\alpha_1$ is compatible with $\alpha_2$, and hence the compatible boolean property $P$ holds as required.
\end{proof}

Let $G, M, e, e', e''$ be as in the above proposition.  If $\alpha_1,\dots,\alpha_W \colon G \to \Z/2^M\Z$ obey the compatible boolean property, then (after permuting $a_i,b_i$ as necessary) each $\alpha_i$ is $\langle e',e''\rangle$-invariant and is $(e, \{a_i,a_i+b\})$-boolean for some $a_i \in \Z/2^M\Z$ and some odd $b$ independent of $i$.  Thus we have representations
\begin{equation}\label{alphai} 
\alpha_i(x) = a_i + b \tilde \alpha_i(x)
\end{equation}
for all $x \in G$ and $i=1,\dots,W$, where the normalized boolean functions $\tilde \alpha_i \colon G \to \Z/2^M\Z$ are $\langle e',e''\rangle$-invariant and $(e,\{0,1\})$-boolean.  Note that the $a_i,b$ are only unique up to the reflection symmetry
$$ (a_1,\dots,a_W,b) \mapsto (a_1+b,\dots,a_W+b, -b)$$
that effectively replaces the normalized boolean functions $\tilde \alpha_i$ with their reflections $1-\tilde \alpha_i$.

\begin{definition}[Property $P_\Omega$]
    Let $\Omega$ be a subset of $\{0,1\}^W$ which is symmetric with respect to the reflection 
$$ (y_1,\dots,y_W) \mapsto (1-y_1,\dots,1-y_W).$$
We say that a $(G, (\Z/2^M\Z)_{w=1,\dots,W})$-function $(\alpha_1,\dots,\alpha_W)$ obeys property $P_\Omega$ if it obeys the compatible boolean property $P$, and furthermore that the normalized functions $\tilde \alpha_1,\dots,\tilde \alpha_W$ obey the boolean constraint
$$ (\tilde \alpha_1(x), \dots, \tilde \alpha_W(x)) \in \Omega$$
for all $x \in G$.
\end{definition}  
Note that from the symmetry hypothesis,  it does not matter which of the two available normalizations $\tilde \alpha_i$ of the $\alpha_i$ are used here. 
 Importantly, such relations are weakly expressible when $M$ is large enough:

\begin{proposition}[Expressing symmetric boolean constraints]\label{sym-bool}  Let $G$ be a finitely generated abelian group, let $e, e', e''$ be elements of $G$ that generate a copy of $(\Z/2\Z)^3$, let $\Z/2^M\Z$ be a cyclic $2$-group for some $M \geq 2$, and let $W \geq 1$.  Let $\Omega$ be a symmetric subset of $\{0,1\}^W$.  If $2^M > 2W+4$, then the $(G, (\Z/2^M\Z)_{w=1,\dots,W})$-property $P_\Omega$ is weakly expressible.
\end{proposition}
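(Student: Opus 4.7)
My plan is to build the property $P_\Omega$ from the compatible boolean property $P$ (weakly expressible by Proposition~\ref{multi-express}) by conjoining an additional pointwise constraint. Since weak expressibility is closed under conjunction (Lemma~\ref{closure}(iii)), it suffices to express, on top of $P$, the further constraint $(\tilde\alpha_w(x))_w \in \Omega$ for all $x \in G$. Here the $\tilde\alpha_w \colon G \to \{0,1\}$ are the normalized boolean functions coming from the decomposition $\alpha_w(x) = a_w + b\tilde\alpha_w(x)$ provided by $P$ (unique up to the simultaneous reflection $\tilde\alpha_w \mapsto 1-\tilde\alpha_w$, $b \mapsto -b$).

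Using the symmetry of $\Omega$, the complement $\{0,1\}^W \setminus \Omega$ decomposes into reflection orbits $\{y,\mathbf{1}-y\}$, and the $e$-boolean property $(\tilde\alpha_w(x+e))_w = \mathbf{1}-(\tilde\alpha_w(x))_w$ forces each such pair to be excluded together. By a further application of Lemma~\ref{closure}(iii), the task reduces to expressing, for each forbidden $y \in \{0,1\}^W$, the single-pattern constraint ``$(\tilde\alpha_w(x))_w \neq y$ for all $x$.''

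For a fixed forbidden $y$, I would enlarge the tuple with auxiliary functions $\hat\alpha_w \colon G \to \Z/2^M\Z$ that are $(e,\{0,1\})$-boolean (Proposition~\ref{bool-expr}) and $\langle e',e''\rangle$-invariant (Corollary~\ref{express-period}), and yoke them to the $\tilde\alpha_w$ by adapting the binary-expansion mechanism from the proof of Proposition~\ref{multi-express}: auxiliary functions $\tau',\tau''$ with the staggered periodicities used there, combined with $\hat\alpha_w$ and $\alpha_w$ through linear equations of the form $\alpha_w + \tau'+\tau'' = 2\beta_w+\gamma_w+z_w$ (all expressible by Corollary~\ref{express-linear}), force $\hat\alpha_w \equiv \tilde\alpha_w$ up to the common reflection (which is harmless because $\Omega$ is symmetric). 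I then introduce the \emph{count function} $T(x) := k + \sum_w (-1)^{y_w}\hat\alpha_w(x)$ with $k := |\{w:y_w=1\}|$, again expressible by Corollary~\ref{express-linear}. By construction $T(x) \in \{0,1,\dots,W\}$, $T(x) + T(x+e) = W$, and $T(x)=0$ precisely when $(\hat\alpha_w(x))_w = y$; hence excluding the single value $T=0$ automatically excludes $T=W$ too.

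The main technical obstacle is now to encode ``$T(x) \neq 0$ for all $x$'' as a functional equation. A naive attempt with a single two-term equation $(T(x)+E_1) \uplus (T(x+e)+E_2) = \Z/2^M\Z$ reduces, via $T(x+e)=W-T(x)$, to a combinatorial constraint on the difference set $E_1-E_2 \subseteq \Z/2^M\Z$ that must include $\{\pm W\}$ but avoid the arithmetic progression $\{W-2,W-4,\dots,-W+2\}$, and a short additive-combinatorics argument shows this is infeasible for $W \geq 3$ with $|E_1|+|E_2|=2^M$. The resolution is to repeat the ``binary decomposition'' idea: introduce further auxiliary compatible-boolean functions $\tau'_T,\tau''_T$ and a representation $T + \tau'_T + \tau''_T = 2\beta + \gamma + z$ with $\beta,\gamma$ boolean and $z$ constant, so that the event $T=0$ is detected by a specific pattern in $(\beta,\gamma)$ at the endpoints $\{x,x+e\}$, and then to forbid that specific pattern using a tiling equation on the $(\beta,\gamma)$-pair (essentially the same technique Lemma~\ref{force-compat} used to rule out an incompatible orbit). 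The bound $2^M > 2W+4$ enters precisely here: the shifted count $T+\tau'_T+\tau''_T$ has range $\{0,1,\dots,W+2\}$ of size $W+3$, its $e$-reflection occupies another $W+3$ values (overlapping in at most $2$), so one needs at least $2W+4$ distinct residues available in $\Z/2^M\Z$ for the binary decomposition to be unambiguous and for the exclusion equation to be well-posed. Assembling all these weakly expressible ingredients using the closure operations in Lemma~\ref{closure} yields the desired $P_\Omega$.
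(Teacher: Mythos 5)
Your reduction to excluding a single reflection-pair $\{y,\mathbf{1}-y\}$ via symmetry and closure under conjunction, and your reformulation of that exclusion as ``$T(x)\in\{1,\dots,W-1\}$ for all $x$'' where $T(x)=\sum_w R_{y_w}(\tilde\alpha_w(x))$ is the count function, both match the paper's strategy exactly. The gap is in the final step, where you try to express ``$T(x)\neq 0$''. The mechanism you propose --- a representation $T+\tau'_T+\tau''_T=2\beta+\gamma+z$ with a \emph{single} pair of boolean functions $\beta,\gamma$ --- cannot work: the left-hand side ranges over a set of size up to $W+3$, while $2\beta+\gamma$ takes only four values, so no such identity can hold once $W\geq 2$. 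The binary-expansion trick from Proposition~\ref{multi-express} is tailored to a sum of exactly three booleans (range of size $4$); it does not scale to a count of $W$ booleans. Replacing $(\beta,\gamma)$ by $\lceil\log_2(W+1)\rceil$ binary digit functions does not rescue the argument either, because (a) the digits of $T$ and of its $e$-reflection $W-T$ are not related by the alternating property \eqref{alternating}, so they are not $(e,\{\cdot,\cdot\})$-boolean in the required sense, and (b) forbidding the all-zero digit pattern is again a boolean exclusion problem of the very type you are trying to express, so the construction is circular.

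The idea you are missing is the paper's \emph{unary} (not binary) encoding of the count: after normalizing $W$ to be odd with $W\geq 3$ (which is why the hypothesis can be relaxed from $2^M>2W+4$ to $2^M>2W$), one observes that $T(x)\in\{1,\dots,W-1\}$ holds for all $x$ \emph{if and only if} there exist $W-2$ auxiliary compatible boolean functions $\beta_1,\dots,\beta_{W-2}$ and a constant $z$ with
$$(-1)^{\epsilon_1}\alpha_1(x)+\dots+(-1)^{\epsilon_W}\alpha_W(x)=\beta_1(x)+\dots+\beta_{W-2}(x)+z$$
for all $x$; the mere existence of such a representation certifies the constraint, so the exclusion is achieved purely by existential quantification over a linear equation (Corollary~\ref{express-linear}) plus the compatible boolean property, with no ``forbid a pattern'' step at all. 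Two points then need care which your sketch omits: in the forward direction the $\beta_i$ must be chosen so that they genuinely satisfy $\beta_i(x+e)=b-\beta_i(x)$ (the paper uses the split $\beta_i=b1_{i<f(x)}$ when $f(x)<W/2$ and $\beta_i=b1_{i\geq W-f(x)}$ when $f(x)>W/2$, which requires $W$ odd); and in the converse direction the summation over $\{x,x+e\}$ gives $W=W-2+2z_0$, so $z_0\in\{1,2^{M-1}+1\}$, and the range bound $|z_0|\leq W<2^{M-1}$ is what forces $z_0=1$ --- this is where the size hypothesis on $2^M$ is actually used, rather than in the counting of residues you describe.
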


\begin{proof}  This will be a variant of the arguments in \cite[\S 6]{GT2}.  By increasing $W$ by one or two if necessary (and relaxing $2^M > 2W+4$ to $2^M > 2W$) using Lemma \ref{closure}(iv), we may assume without loss of generality that $W$ is odd with $W \geq 3$.  The symmetric set $\Omega$ can be expressed as the intersection of a finite\footnote{This number, while finite, could be very large (exponentially large in $W$).  This exponential growth will cause the dimension $d$ in Theorem \ref{main} to be enormous, as one has to perform the conjunction of exponentially many expressible properties.  A substantially more efficient approach will be needed here if one wishes to obtain a more reasonable value for the dimension $d$.} number of symmetric sets of the form
\begin{equation}\label{single}
 \{0,1\}^W \backslash \{ (\epsilon_1,\dots,\epsilon_W), (1-\epsilon_1,\dots,1-\epsilon_W) \}
\end{equation}
for some $\epsilon_1,\dots,\epsilon_W \in \{0,1\}$.  By Lemma \ref{closure}(iii), it thus suffices to verify the claim for $\Omega$ of the form \eqref{single}.  

We introduce some auxiliary functions $\beta_1,\dots,\beta_{W-2} \colon G \to \Z/2^M\Z$, and let $P^*_\Omega$ be the $(G, (\Z/2^M\Z)_{w=1,\dots,2W-2})$-property that a tuple $(\alpha_1,\dots,\alpha_W,\beta_1,\dots,\beta_{W-2})$ of functions from $G$ to $\Z/2^M\Z$ obey the following properties:
\begin{itemize}
    \item[(i)]  $(\alpha_1,\dots,\alpha_W,\beta_1,\dots,\beta_{W-2})$ obeys the compatible boolean property $P$ (with $W$ replaced by $2W-2$);
    \item[(ii)] There is a constant $z \in \Z/2^M\Z$ such that
    $$ (-1)^{\epsilon_1} \alpha_1(x)+\dots+ (-1)^{\epsilon_W} \alpha_W(x) = \beta_1(x) + \dots + \beta_{W-2}(x) + z$$
    for all $x \in G$.
\end{itemize}
From Proposition \ref{multi-express}, Corollary \ref{express-linear}, and Lemma \ref{closure}, the property $P^*_\Omega$ is weakly expressible.  By Lemma \ref{closure}(iv), it thus suffices to show that $P_\Omega$ is the existential quantification of $P^*_\Omega$.

We first show that any tuple $(\alpha_1,\dots,\alpha_W)$ obeying $P_\Omega$ can be extended to a tuple $(\alpha_1,\dots,\alpha_W,\beta_1,\dots,\beta_{W-2})$ obeying $P^*_\Omega$.  By hypothesis, we can write the $\alpha_i$ in the form \eqref{alphai} for some $a_i, b \in \Z/2^M\Z$ with $b$ odd, and some $(e,\{0,1\})$-boolean and $\langle e', e'' \rangle$-periodic functions $\tilde \alpha_1,\dots,\tilde \alpha_W \colon G \to \{0,1\}$.  In particular
$$ (-1)^{\epsilon_1} \alpha_1+\dots+ (-1)^{\epsilon_W} \alpha_W = b (\tilde \alpha_{1, \epsilon_1} + \dots + \tilde \alpha_{W, \epsilon_W}) + z_0$$
for some constant $z_0 \in \Z/2^M\Z$, where $\tilde \alpha_{i,\epsilon_i} \colon G \to \{0,1\}$ is the $(e,\{0,1\})$-boolean and $\langle e', e'' \rangle$-periodic function $R_{\epsilon_i}(\tilde \alpha_i)$, where for $a=0,1$
\begin{equation}\label{reflection}
R_{a}(x)\coloneqq a+ (-1)^{a}x,\quad x\in G.
\end{equation}
By the choice \eqref{single} of $\Omega$, we see that for every $x$, the tuple $(\tilde \alpha_{1,\epsilon_1}(x),\dots,\tilde \alpha_{W,\epsilon_W}(x))$ is an element of the cube $\{0,1\}^W$ that avoids both $(0,\dots,0)$ and $(1,\dots,1)$.  In particular, we have
$$ \tilde \alpha_{1, \epsilon_1}(x) + \dots + \tilde \alpha_{W, \epsilon_W}(x) = b f(x)$$
for some $f(x) \in \{1,\dots,W-1\}$, which is well-defined since the odd element $b$ of $\Z/2^M\Z$ has order $2^M > 2W$; note that $f$ is $\langle e',e''\rangle$-periodic and obeys the alternating property $f(x+e) = W-f(x)$ for all $x \in G$.  We can therefore write
$$ b(\tilde \alpha_{1, \epsilon_1}(x) + \dots + \tilde \alpha_{W, \epsilon_W}(x)) = \beta_1(x) + \dots + \beta_{W-2}(x) + b$$
for all $x \in G$, where $\beta_i(x)$ for $i=1,\dots,W-2$ is defined by
$$ \beta_i(x) \coloneqq b 1_{i < f(x)}$$ 
if $f(x) < W/2$ and
$$ \beta_i(x) \coloneqq b 1_{i \geq W-f(x)}$$
if $f(x) > W/2$.  The reason for this rather complicated choice of $\beta_i$ is so that $\beta_i$ becomes a $(e, \{0,b\})$-boolean and $\langle e',e'' \rangle$-invariant function (in particular one has $\beta_i(x+e) = b-\beta_i(x)$ for all $x \in G$).  It is then a routine matter to verify that $(\alpha_1,\dots,\alpha_W,\beta_1,\dots,\beta_{W-2})$ obeys property $P_\Omega$ as required.

Conversely, suppose that $(\alpha_1,\dots,\alpha_W,\beta_1,\dots,\beta_{W-2})$ obeys the property $P^*_\Omega$.  By property (i), we may write $\alpha_i = a_i + b \tilde \alpha_i$ and $\beta_i = c_i + b \tilde \beta_i$ for some $a_i,c_i,b \in \Z/2^M\Z$ with $b$ odd and some $\langle e',e''\rangle$-invariant and $(e,\{0,1\})$-boolean functions $\tilde \alpha_1,\dots,\tilde \alpha_W,\tilde \beta_1,\dots,\tilde \beta_{W-2} \colon G \to \{0,1\}$.  Inserting these representations into (ii), we see that there exists a constant $z_0 \in \Z/2^M\Z$ such that
$$ \tilde \alpha_{1, \epsilon_1}(x) + \dots + \tilde \alpha_{W, \epsilon_W}(x) = \tilde \beta_1(x) + \dots + \tilde \beta_{W-2}(x) + z_0$$
for all $x \in G$, where $\tilde \alpha_{i,\epsilon_i}$ is defined by $R_{\epsilon_i}(\tilde \alpha_i)$ and \eqref{reflection}.  Summing over $x$ and $x+e$ and using the $(e,\{0,1\})$-boolean nature of the $\tilde \alpha_{i,\epsilon_i}$ and $\tilde \beta_i$, we conclude that
$$ W = W-2 + 2z_0$$
and hence $z_0$ is equal to $1$ or $2^{M-1}+1$.  On the other hand, since $\tilde \alpha_{i,\epsilon_i}$, $\tilde \beta_i$ take values in $\{0,1\}$, $z_0$ must take values in $\{-W+2,\dots,W\}$ modulo $2^M$.  Since $2^{M-1} > W$, we must therefore have $z_0=1$.  In particular, $\tilde \alpha_{1, \epsilon_1} + \dots + \tilde \alpha_{W, \epsilon_W}$ takes values in $\{1,\dots,W-1\}$, and hence $(\tilde \alpha_{1, \epsilon_1}, \dots, \tilde \alpha_{W, \epsilon_W})$ cannot be $(0,\dots,0)$ or $(1,\dots,1)$.  This implies that $(\alpha_1,\dots,\alpha_W)$ obeys the property $P_\Omega$, and we are done.
\end{proof}

We need a variant of the above proposition which involves a modification of Example \ref{perm-expr} that is compatible\footnote{The simpler clock property from Example \ref{clock} is unsuitable for this purpose due to its incompatiblity with \eqref{alternating}, but the reader is encouraged to think of the periodized permutation property as technical substitute for the clock property.} with the alternating relation \eqref{alternating}.  We again let $G, M, e, e', e''$ be as in Proposition \ref{multi-express}, and suppose that $\alpha_1,\dots,\alpha_W \colon G \to \Z/2^M\Z$ obey the compatible boolean property $P$, so as before we have a representation \eqref{alphai}, unique up to reflection symmetry.  

\begin{definition}[Boolean periodized permutation]\label{boolper}
If $v \in G$, we say that a tuple $(\alpha_1,\dots,\alpha_W)$ is a \emph{boolean periodized permutation} along the direction $v$ if it obeys the compatible boolean property $P$, and for each $x \in G$, the map
$$ j \mapsto (\tilde \alpha_1(x+jv), \dots, \tilde \alpha_W(x+jv))$$
is a bijection from $\{0,\dots,2^W-1\}$ to $\{0,1\}^W$.
\end{definition}
Note that the boolean periodized permutation property is preserved under reflection symmetry and is therefore well-defined.  Comparing this claim with the corresponding claim with $x$ replaced by $x+v$, we see that the boolean periodized permutation property implies in particular that
$$ (\tilde \alpha_1(x+2^W v), \dots, \tilde \alpha_W(x+2^W v)) = (\tilde \alpha_1(x), \dots, \tilde \alpha_W(x))$$
and hence each of the $\tilde \alpha_i$ (or $\alpha_i$) are $\langle 2^W v\rangle$-periodic.  

\begin{proposition}[Expressing a boolean periodized permutation]\label{perm-express}  Let $G$ be a finitely generated abelian group, let $e, e', e''$ be elements of $G$ that generate a copy of $(\Z/2\Z)^3$, let $\Z/2^M\Z$ be a cyclic $2$-group for some $M \geq 2$, let $W \geq 1$, and let $v \in G$.  Then the property of being a boolean periodized permutation along the direction $v$ is a weakly expressible $(G, (\Z/2^M\Z)_{w=1,\dots,W})$-property.
\end{proposition}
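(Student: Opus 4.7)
The plan is to introduce $(N-1)W$ auxiliary functions (where $N=2^W$) that encode the shifted values of the $\alpha_i$ along $v$, and to realize the boolean periodized permutation property as the existential quantification of a conjunction of items in the library already built up.

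First I would introduce auxiliary functions $\alpha^{(j)}_i \colon G \to \Z/2^M\Z$ for $j \in \{1, \dots, N-1\}$ and $i \in \{1, \dots, W\}$, and set $\alpha^{(0)}_i := \alpha_i$ for uniform notation. Let $P^*$ be the $(G, (\Z/2^M\Z)_{w=1,\dots,NW})$-property obtained as the conjunction of:
\begin{enumerate-math}
\item The full $NW$-tuple $(\alpha^{(j)}_i)_{0 \le j < N,\, 1 \le i \le W}$ obeys the compatible boolean property with parameters $e, e', e''$; weakly expressible by Proposition \ref{multi-express}.
\item For each $j \ge 1$ and each $i$, the difference $\alpha^{(j)}_i(x) - \alpha_i(x+jv)$ is constant in $x$; expressible by a shifted analogue of Example \ref{const} that encodes, for each generator $e_k$ of $G$, the equality $\alpha^{(j)}_i(x) - \alpha_i(x+jv) = \alpha^{(j)}_i(x+e_k) - \alpha_i(x+jv+e_k)$ via a diagonal/complement functional equation on the pair $(\alpha^{(j)}_i,\alpha_i)$ with appropriate argument shifts.
\item For each $x \in G$, the normalized boolean tuple $(\tilde\alpha^{(j)}_i(x))_{j,i}$ lies in the subset $\Omega \subset \{0,1\}^{NW}$ consisting of tuples $(y_{j,i})$ whose $N$ sub-tuples $(y_{j,i})_{i=1}^W$ (indexed by $j$) are pairwise distinct. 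Complementing all coordinates preserves distinctness of sub-tuples, so $\Omega$ is symmetric under the global reflection, and Proposition \ref{sym-bool} (applied for $M$ large enough that $2^M > 2NW + 4$) shows this is weakly expressible.
\end{enumerate-math}
By Lemma \ref{closure}(iii), $P^*$ is weakly expressible.

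Next I would verify that the existential quantification of $P^*$ eliminating the $\alpha^{(j)}_i$ with $j \ge 1$ is exactly the boolean periodized permutation property. The forward direction is immediate by setting $\alpha^{(j)}_i(x) := \alpha_i(x + jv)$: the compatible boolean property is preserved under translation, (ii) is trivial, and (iii) is the assumed bijection between $\{0,\dots,N-1\}$ and $\{0,1\}^W$. For the backward direction, from (i) write $\alpha_i = a_i + b\tilde\alpha_i$ and $\alpha^{(j)}_i = a^{(j)}_i + b\tilde\alpha^{(j)}_i$ with a \emph{common} odd step $b$; inserting these representations into (ii) shows that $b\bigl(\tilde\alpha^{(j)}_i(x) - \tilde\alpha_i(x+jv)\bigr)$ is constant in $x$, so (as $b$ is invertible in $\Z/2^M\Z$) the difference $\tilde\alpha^{(j)}_i(x) - \tilde\alpha_i(x+jv)$ is a constant lying in $\{-1,0,1\}$. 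The $(e,\{0,1\})$-boolean alternation $\tilde\alpha_i(x+e) = 1 - \tilde\alpha_i(x)$ forces $\tilde\alpha_i$ to attain both values $0$ and $1$ on $G$, which rules out the $\pm 1$ cases and leaves $\tilde\alpha^{(j)}_i(x) = \tilde\alpha_i(x+jv)$ pointwise. Item (iii) then reads that the $N$ tuples $(\tilde\alpha_i(x+jv))_{i=1}^W$, $j = 0,\dots,N-1$, are pairwise distinct in $\{0,1\}^W$; since $|\{0,1\}^W| = N$, this distinctness upgrades to a bijection, which is exactly the boolean periodized permutation property. Lemma \ref{closure}(iv) then concludes.

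The main obstacle I expect is the backward direction. Without the common-step rigidity from compatibility and the alternation from the $(e,\{0,1\})$-boolean property, the auxiliaries $\alpha^{(j)}_i$ could encode the shifts $\alpha_i(\cdot+jv)$ only \emph{up to a coordinate-dependent $\{0,1\}$-flip}, and such flips can either fabricate or destroy the distinctness of the sub-tuples in (iii), wrecking the equivalence. Interlocking items (i) and (ii) is precisely the mechanism that pins down a genuine equality $\tilde\alpha^{(j)}_i = \tilde\alpha_i(\cdot+jv)$ and makes (iii) faithful to the original permutation condition.
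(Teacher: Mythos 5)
Your architecture differs from the paper's: the paper does not introduce any auxiliary functions here at all. It observes that, given the compatible boolean property, distinctness of the normalized tuples $(\tilde\alpha_1(x+jv),\dots,\tilde\alpha_W(x+jv))$ for $j=0,\dots,2^W-1$ is the same as these $2^W$ unnormalized tuples occupying distinct cosets of $(2\Z/2^M\Z)^W$, which is exactly the single functional equation \eqref{eqn}; the proposition is then the conjunction of Proposition \ref{multi-express} with one expressible equation. Your route, by contrast, routes everything through auxiliary copies of the shifts and Proposition \ref{sym-bool}, and it has a genuine gap at step (ii). The relation ``$\alpha^{(j)}_i(x)-\alpha_i(x+jv)$ is constant in $x$'' is \emph{not} obtainable by a ``shifted analogue of Example \ref{const}'': in the functional-equation language \eqref{function-system-tuple}, every term of the disjoint union has the form $(\alpha_w(x+h))_{w\in\Wcal}+E$, i.e.\ \emph{all} component functions are evaluated at the \emph{same} shifted point $x+h$. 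The equation you would need pairs $\alpha^{(j)}_i$ at $x$ with $\alpha_i$ at $x+jv$ inside a single $\Delta$-coset term, which is not a legal term. If one tries to repair this with terms that each depend on only one component (sets of the form $S\times H$ or $H\times T$), the resulting ``slabs'' of different orientations always intersect, so no disjoint partition of $H^2$ can encode a cross-point, cross-component linear relation this way. This is precisely the difficulty the paper's coset-partition equation \eqref{eqn} is designed to sidestep, since there each of the $2^W$ terms evaluates the whole tuple at the single point $x+jv$.

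A secondary issue: in step (iii) you invoke Proposition \ref{sym-bool} ``for $M$ large enough that $2^M>2NW+4$'' with $N=2^W$, but in Proposition \ref{perm-express} the modulus $M\geq 2$ is part of the given data, not a parameter you may enlarge. As written your argument would only prove the statement under an additional largeness hypothesis on $M$ (which happens to hold in the paper's eventual application, but is not what the proposition asserts). The paper's proof needs no lower bound on $M$ beyond $M\geq 2$ because it never passes through Proposition \ref{sym-bool}.
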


\begin{proof}  We can assume that $v$ has order at least $W$, otherwise the property is impossible to satisfy.
We claim that a tuple $(\alpha_1,\dots,\alpha_W)$ obeys the boolean periodized permutation property along $v$ if and only if it obeys the compatible boolean property  $P$ and additionally solves the functional equation
\begin{equation}\label{eqn}
 \biguplus_{j=0}^{2^W-1} (\alpha_1(x+jv),\dots,\alpha_W(x+jv)) + (2\Z/2^M\Z)^W = (\Z/2^M\Z)^W
 \end{equation}
for all $x$.  The equation \eqref{eqn} defines an expressible property by definition (the $jv$ are all distinct as $v$ has order at least $W$), and so the proposition will follow from ths claim, Proposition \ref{multi-express}, and Lemma \ref{closure}.

It remains to verify the claim.  If $(\alpha_1,\dots,\alpha_W)$ is a boolean periodized permutation along $v$, then it obeys $P$, and the $2^W$ tuples $$(\tilde \alpha_1(x+jv), \dots, \tilde \alpha_W(x+jv)), \quad j=0,\dots,2^W-1$$ occupy distinct points in $\{0,1\}^W$. Since $\alpha_w(x+jv) = a_i + b \tilde \alpha_w(x+jv)$ and $b$ is odd, we conclude that the $2^W$ tuples
$(\alpha_1(x+jv),\dots,\alpha_W(x+jv))$ occupy distinct cosets of $(2\Z/2^M\Z)^W$.  Since there are only $2^W$ such cosets, this gives \eqref{eqn}.  The converse implication follows by reversing these steps.
\end{proof}

\section{Programming a Sudoku puzzle}\label{sec:sudoku}

We now combine the various weakly expressible statements described in the previous section to reduce matters to demonstrating aperiodicity of a certain type of ``Sudoku puzzle''.  To define this puzzle we need some notation.

\subsection{A $2$-adically structured function}  

We begin the construction of the ``Sudoku puzzle''.
We henceforth fix a base $q=2^{s_0}$, which will be a sufficiently large but constant power of two (for instance $s_0=10$, $q = 2^{10}$ would suffice).  In particular, the reader should interpret any exceptional set of (upper) density $O(1/q)$ as being negligible in size. We define the \emph{digit set} $\Sigma$ to be the finite set
$$ \Sigma =\Sigma_q \coloneqq (\Z/q\Z) \backslash \{0\}.$$
We need a large width $N$ depending on $q$; one convenient choice to take will be
$$ N \coloneqq q^2,$$
although our arguments would also work \emph{mutatis mutandis} for larger choices of $N$.
We then define the \emph{Sudoku board}
$$ \mathbb{B} \coloneqq \{1,\dots,N\} \times \Z.$$
Elements $(n,m)$ of this board will be referred to as \emph{cells}.  We isolate some collections of cells of relevance to our arguments:
\begin{itemize}
    \item A \emph{column} is a set of cells of the form $\{n\} \times \Z$ for some $1 \leq n \leq N$.
    \item A \emph{non-vertical line} $\ell = \ell_{i,j}$ is a set of cells of the form
$$ \ell_{i,j} \coloneqq \{ (n,jn+i): 1 \leq n \leq N \}$$
    for some \emph{slope} $j \in \Z$ and \emph{intercept} $i \in \Z$.
    \item A \emph{row} is a non-vertical line of slope $0$, that is to say a set of cells of the form $\{1,\dots,N\} \times \{m\}$ for some $m \in \Z$.
    \item A \emph{diagonal} is a non-vertical line of slope $1$, that is to say a set of cells of the form $\{ (n, n+i): 1 \leq n \leq N\}$ for some $i \in \Z$.
    \item An \emph{anti-diagonal} is a non-vertical line of slope $-1$, that is to say a set of cells of the form $\{ (n, i-n): 1 \leq n \leq N \}$ for some $i \in \Z$.
    \item A \emph{square} $Q_{n_0,m_0}$ is a set of cells of the form 
    \begin{equation}\label{square}
    Q_{n_0,m_0} \coloneqq \{n_0,\dots,n_0+7\} \times \{m_0,\dots,m_0+7\}
    \end{equation}
    for some $1 \leq n_0 \leq N-7$ and $m_0 \in \Z$.
\end{itemize}

See Figure \ref{fig:board}.

\begin{figure}
    \centering
    \includegraphics[width = .9\textwidth]{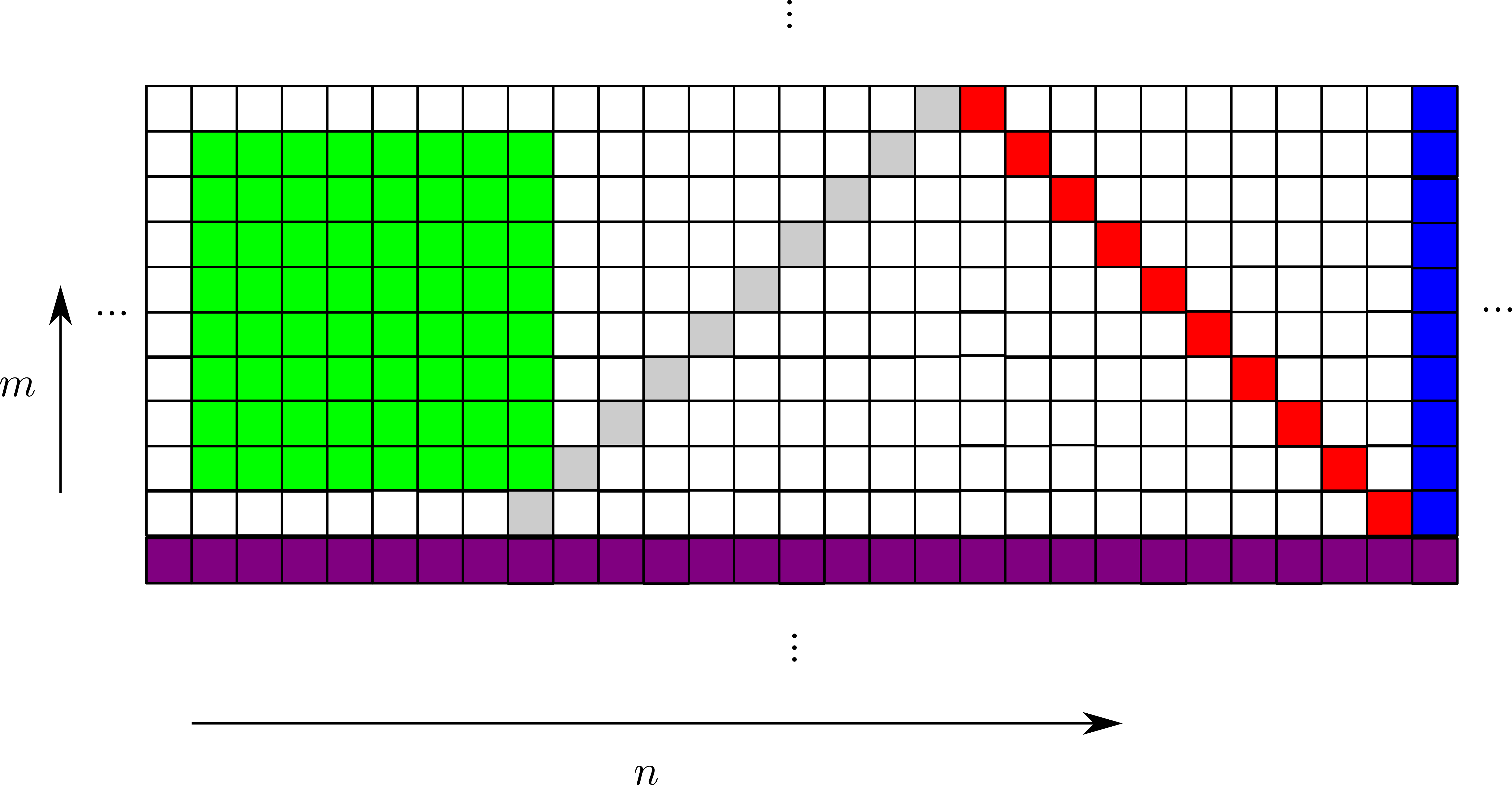}
    \caption{A portion of the Sudoku board $\Omega$, with some selected (overlapping) objects: a column (in blue), a row (in purple), a diagonal (in gray), an antidiagonal (in red), and a square (in green).}
    \label{fig:board}
\end{figure}

The Sudoku puzzle that we will introduce later will be solved by filling in the cells $(n,m)$ of the Sudoku board $\mathbb{B}$ with digits $F(n,m)$ from $\Sigma$ that obey certain permutation-like constraints along the lines of this board.  This may be compared with a traditional Sudoku puzzle, in which the digit set is $\{1,\dots,9\}$, the board is $\{1,\dots,9\}^2$, and the constraints are that the digit assignment is a permutation on every row and column of the puzzle, as well as certain $3 \times 3$ squares in the board, and also agrees with some prescribed initial data on certain cells.  We note, however, that while traditional Sudoku puzzles are designed to have a \emph{unique} solution, the Sudoku puzzle that we will study will have a number of solutions, though all have a similar \emph{$2$-adic structure} as described below.

We now introduce a ``basic $2$-adically structured function'' $f_q \colon \Z \to \Sigma$, defined by the formula
$$ f_q( q^k m ) \coloneqq m \Mod{q}$$
whenever $k \geq 0$ and $m$ is an integer not divisible by $q$, with the (somewhat arbitrary) convention that $f_{q}(0)=1$.  In other words, $f_{q}(n)$ is the last non-zero digit in the base $q$ expansion of $n$, or $1$ if no such digit exists (see Figure \ref{fig:fp}). 
\begin{figure}
    \centering
    \includegraphics[width = 1.1\textwidth]{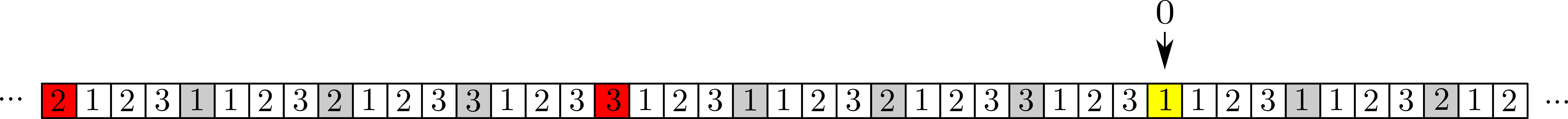}
     \caption{The function $f_q$ for $q=2^2$. The white cells correspond to $n\in  \Z\setminus q\Z$, the gray cells are those with $n\in q\Z\setminus q^2\Z$ and, the red ones have $n\in q^2\Z\setminus q^3\Z$ and  yellow indicates $n=0$. Compare with the example of a limit-periodic pattern in \cite[Figure 3]{einstein}.} 
    \label{fig:fp}
\end{figure}

We observe the functional equations
\begin{equation}\label{mult-1}
f_{q}(q n) = f_{q}(n)
\end{equation}
for all $n \in \Z$
\begin{equation}\label{init}
f_{q}(n) = n \Mod{q}
\end{equation}
when $n$ is not divisible by $q$; indeed, these equations specify $f_{q}$ uniquely except for the value at zero.  We also observe the multiplicativity property 
\begin{equation}\label{mult-2}
f_{q}(a n) = a f_{q}(n)
\end{equation}
whenever $a,n \in \Z$ with $a$ odd and $n$ non-zero.

\begin{remark}
 The function $f_{q} \colon \Z \to \Sigma_q$ is an example of a \emph{limit-periodic function} \cite{God,einstein} (so, in particular, is an \emph{almost periodic function} in the sense of Besicovitch \cite{besicovitch}): for any natural number $r$, $f_{q}$ agrees with a $q^r$-periodic function outside of a single coset $0 + q^r \Z$ of $q^r \Z$, so in particular it agrees with a periodic function outside of a set of arbitrarily small upper density in $\Z$.  For $s_0$ large, this function is also ``approximately affine'' in the sense that it agrees with the affine map $n \mapsto n \Mod{q}$ outside of a single coset $0 + q\Z$ of $q\Z$, which one should view as being a relatively small (though still positive  density) subset of the integers $\Z$.  
 \end{remark}

\begin{remark}\label{2-adics}  One could extend $f_{q}$ to a function $f_{q} \colon \Z_2 \to \Sigma_q$ on the $2$-adics $\Z_2 \coloneqq  \varprojlim_{r \to \infty} \Z/2^r\Z$ (or equivalently, the $q$-adics $\Z_q \coloneqq \varprojlim_{r \to \infty} \Z/q^r \Z$) which is continuous away from the origin (and has a ``piecewise affine'' structure).  As such, we will informally think of the function $f_q$ (as well as various rescaled versions of this function) as having ``$2$-adic structure''.  However, we will not explicitly use the $2$-adic numbers $\Z_2$ in our arguments below, as we did not find that the use of this number system gave any significant simplifications to the argument.
\end{remark}

Our \emph{Sudoku puzzle} is to fill the board $\mathbb{B}$ in such a way that every non-vertical line (but not necessarily every column) is a rescaled version of $f_{q}$.  To make this precise we introduce the following class of finite sequences.
    
    \begin{definition}[A class of $2$-adically structured functions]\label{Sp}  Let $\Scal[N]=\Scal_q[N]$ denote the set of all functions $g \colon \{1,\dots,N\} \to \Sigma$ which take the form
     $$ g(n) = cf_{q}(an+b)$$
     for all $m=1,\dots,N$ and some integers $a,b,c \in \Z$ with $c$ odd.
     \end{definition}

See Figures  \ref{fig:badcoset},  \ref{fig:badcosetorder1}, \ref{fig:nobadcoset13}, \ref{fig:nobadcoset} for some examples of elements of $\Scal[N]$, where we set $q = 2^2$ (and hence $N=16$) in order to make the figures small.  The scaling factor $c$ is of little significance and will often be normalized to $1$ in our arguments.

\begin{figure}
    \centering
    \includegraphics[width = .9\textwidth]{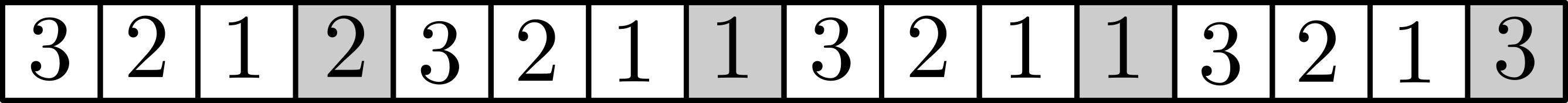}
    \caption{An element $g(n) = f_q(12-n)$ of $\Scal_q[N]$ with $q=4$ and $N=q^2$, depicted as a row of $N$ boxes filled with digits in $\Sigma = \{1,2,3\}$.  In the language of Lemma \ref{stats} below, the step is $s_g = 3 \Mod{q}$, the order $\ord_g$ is zero, the bad coset $\Gamma_g = 4\Z$ is the set of shaded boxes (which in this case has upper density $1/4$), and the associated affine function $\alpha_g(n) = 12 - n \Mod{4}$ vanishes on the bad coset $\Gamma_g$ and agrees with $g$ outside of that coset.}
    \label{fig:badcoset}
\end{figure}

\begin{figure}
    \centering
    \includegraphics[width = .9\textwidth]{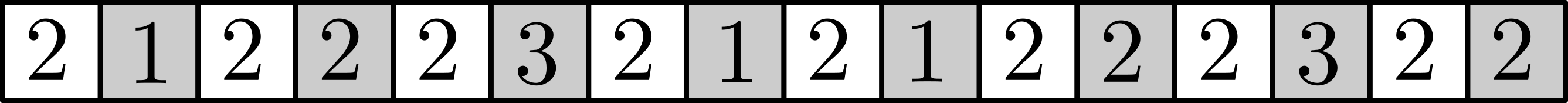}
    \caption{Another element $g(n) = f_4(2(n-8))$ of $\Scal_q[N]$, again with $q=4$ and $N=q^2$.  In the language of Lemma \ref{stats} below, the step is $s_g = 2 \Mod{q}$, the order $\ord_g$ is one, the bad coset $\Gamma_g = 2\Z$ is the set of shaded boxes (which in this case has upper density $1/2$), and the associated affine function $\alpha_g(n) = 2n \Mod{4}$ vanishes on the bad coset $\Gamma_g$ and agrees with $g$ outside of that coset.}
    \label{fig:badcosetorder1}
\end{figure} 

\begin{figure}
    \centering
    \includegraphics[width = .9\textwidth]{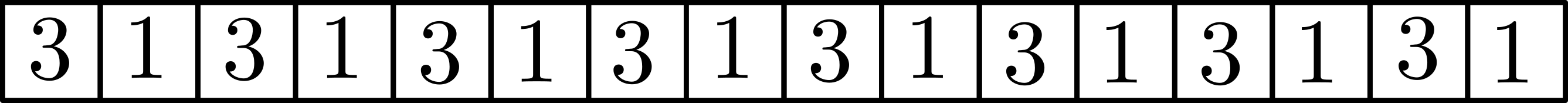}
    \caption{A third element $g(n) = f_4(2n+1)$ of $\Scal_4[16]$.  In the language of Lemma \ref{stats} below, the step is $s_g = 2 \Mod{q}$, the order $\ord_g$ is $-\infty$, the bad coset $\Gamma_g$ is empty (so has upper density $0$), and the associated affine function $\alpha_g(n) = 2n+1 \Mod{4}$ agrees with $g$ everywhere.}
    \label{fig:nobadcoset13}
\end{figure}

\begin{figure}
    \centering
    \includegraphics[width = .9\textwidth]{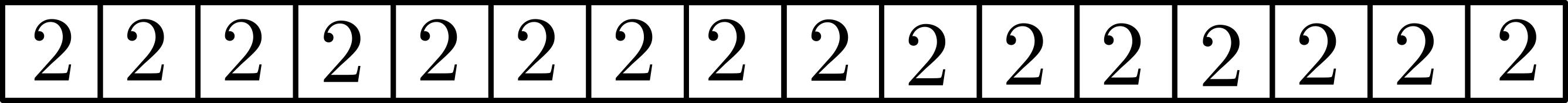}
    \caption{A constant element $g(n)=2 \Mod{4}$ of $\Scal_4[16]$. In the language of Lemma \ref{stats} below, the step is $0$, the order is $-\infty$, the bad coset is empty, and the associated affine function $\alpha_g(n)=2 \Mod{4}$ agrees with $g$ everywhere.}
    \label{fig:nobadcoset}
\end{figure}

We will explore the properties of this class $\Scal[N]$ further in later sections.  For now, we use this class to define our Sudoku puzzle.

\begin{definition}[Sudoku puzzle]\label{sudoku-solution}  Define a \emph{Sudoku solution} to be a function $F \colon \mathbb{B} \to \Sigma$ with the property that for every slope $j \in \Z$ and intercept $i \in \Z$, the function $F_{i,j} \colon \{1,\dots,N\} \to \Sigma$ defined by $F_{i,j}(n) \coloneqq F(n, jn+i)$ lies in the class $\Scal[N]$. (See Figure \ref{fig:sudoku}.)  Informally, $F$ is a Sudoku solution if it is a rescaled copy of $f_q$ along every non-vertical line $\ell_{i,j} = \{ (n,jn+i): 1 \leq n \leq N \}$.

A Sudoku solution is said to have \emph{good columns} if, for every $n=1,\dots,N$, there exists a permutation $\sigma_n \colon \Z/q\Z \to \Z/q\Z$ such that $F(n,m) = \sigma_n(m \Mod{q})$ whenever $\sigma_n(m \Mod{q})$ is non-zero.

A Sudoku solution is \emph{periodic} if the columns $m \mapsto F(n,m)$ is periodic for all $n=1,\dots,N$, and \emph{non-periodic} if at least one of the columns is non-periodic.
\end{definition}

\begin{example}[Standard Sudoku solution]\label{Standard-solution}  The function $F(n,m) \coloneqq f_{q}(m)$ is a Sudoku solution with good columns (in this case, the permutations $\sigma_1,\dots,\sigma_N$ are all equal to the identity permutation.  It is non-periodic. (See Figure \ref{fig:standard}.)
\end{example}

\begin{figure}
    \centering
    \includegraphics[width = .9\textwidth]{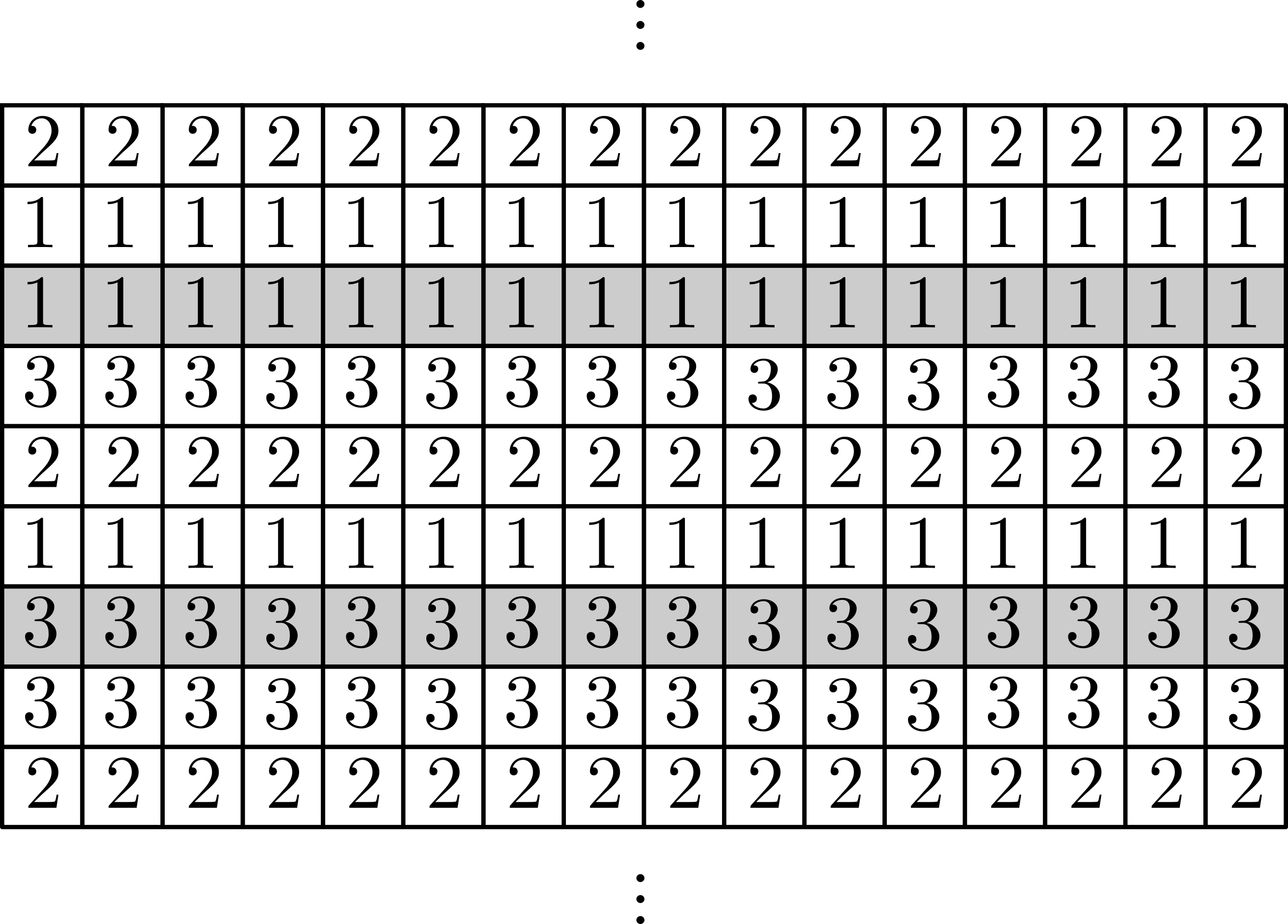}
    \caption{A portion of a standard Sudoku solution (with $q=4$).  Observe that it is affine outside of the shaded cells. This solution is  non-periodic.}
    \label{fig:standard}
\end{figure}

\begin{example}[Constant Sudoku solutions]  If $c \in \Sigma$, then the constant function $F(n,m) \coloneqq c$ is a Sudoku solution which is periodic, but which does not have good columns.
\end{example}

\begin{figure}
    \centering
    \includegraphics[width = .9\textwidth]{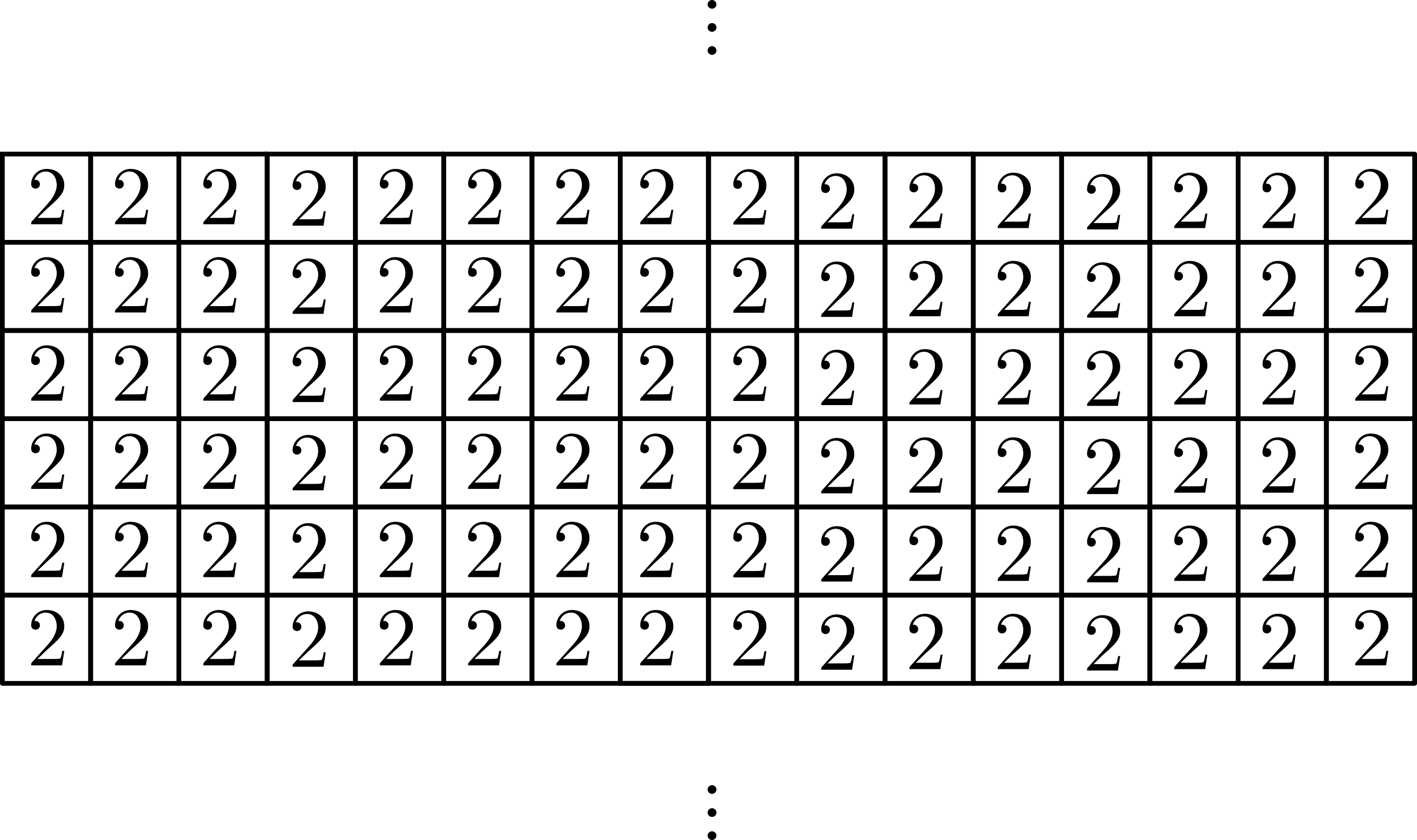}
    \caption{A portion of a constant Sudoku solution (with $c=2$).  Observe that it is affine  and  also periodic.}
    \label{fig:badsudoku}
\end{figure}

\begin{figure}
    \centering
    \includegraphics[width = .9\textwidth]{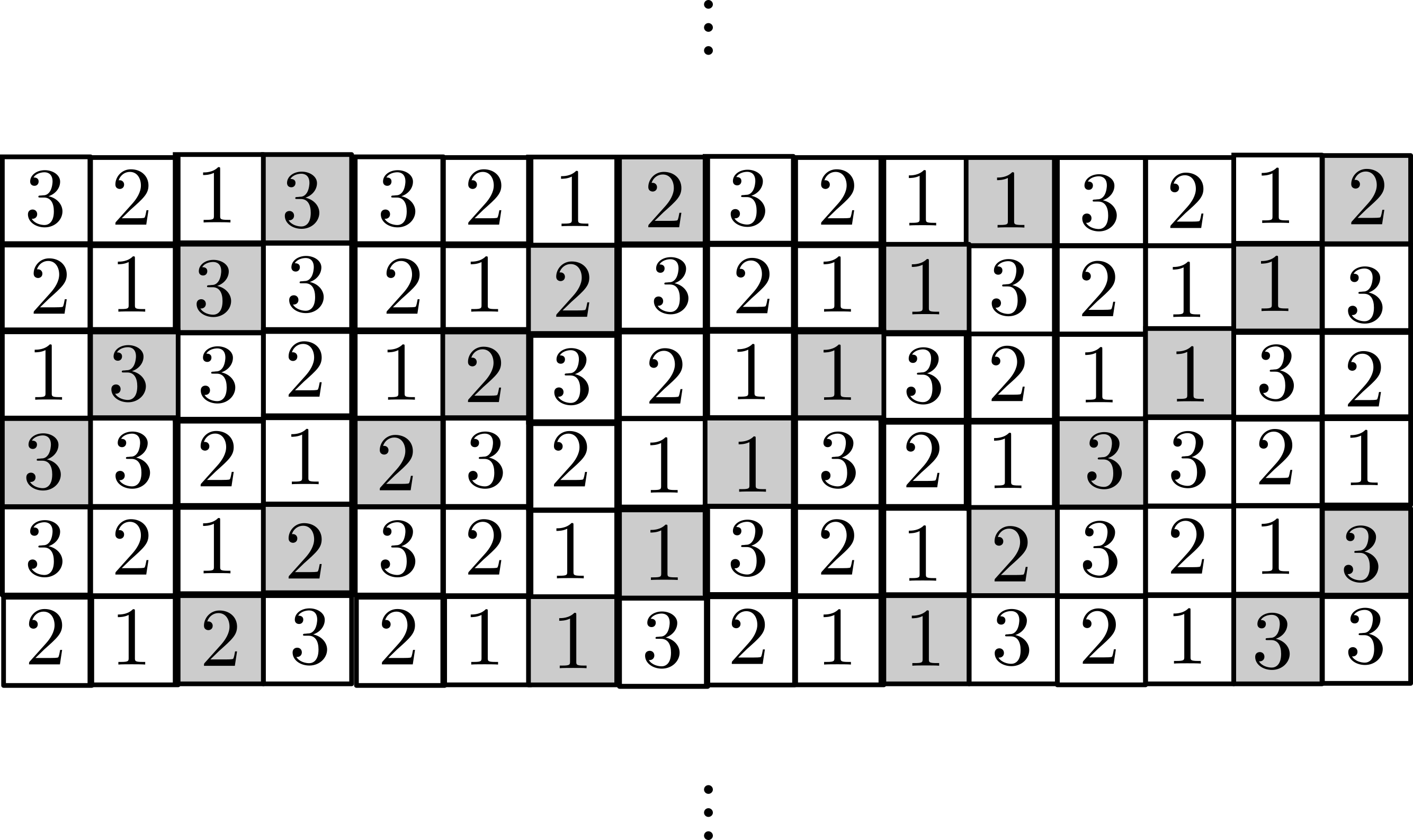}
    \caption{A portion of a Sudoku solution with good columns.  Observe that it is affine outside of the shaded cells, and is also non-periodic.}
    \label{fig:sudoku}
\end{figure}

For future reference we record some simple invariances of Sudoku solutions.

\begin{proposition}[Sudoku invariances]\label{inv}\  
\begin{itemize}
    \item[(i)] (Affine invariance) If $F \colon \mathbb{B} \to \Sigma$ is a Sudoku solution, then for any integers $a,b,c$, the function $(n,m) \mapsto F(n, am+bn+c)$ is also a Sudoku solution.
    \item[(ii)] (Reflection symmetry) If $F \colon \mathbb{B} \to \Sigma$ is a Sudoku solution, then so is the reflection $(n,m) \mapsto F(N+1-n, m)$.
    \item[(iii)] (Homogeneity) If $F \colon \mathbb{B} \to \Sigma$ is a Sudoku solution, then so is $cF$ for any odd $c \in \Z/q\Z$.
\end{itemize}
\end{proposition}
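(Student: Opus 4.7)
The plan is to verify each of the three invariances directly from Definition \ref{Sp} and Definition \ref{sudoku-solution}, by tracking how each transformation acts on the parametrization of the restrictions $F_{i,j}(n) = F(n, jn+i)$.

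For part (i), I would fix integers $a,b,c$ and set $\tilde F(n,m) \coloneqq F(n, am+bn+c)$. Along an arbitrary non-vertical line of slope $j'$ and intercept $i'$, compute
\[
\tilde F(n, j'n+i') = F(n, a(j'n+i') + bn + c) = F(n, (aj'+b)n + (ai'+c)),
\]
which is $F_{ai'+c,\,aj'+b}(n)$. Since $F$ is a Sudoku solution, this already lies in $\Scal[N]$, so $\tilde F$ is a Sudoku solution.

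For part (ii), set $\tilde F(n,m) \coloneqq F(N+1-n, m)$ and compute along the line of slope $j$ and intercept $i$:
\[
\tilde F(n, jn+i) = F(N+1-n,\, jn+i) = F_{j(N+1)+i,\,-j}(N+1-n).
\]
By hypothesis $F_{j(N+1)+i,-j}(n') = c\, f_q(an'+b)$ for some integers $a,b,c$ with $c$ odd. Substituting $n' = N+1-n$ gives
\[
\tilde F(n, jn+i) = c\, f_q\bigl(-an + (a(N+1)+b)\bigr),
\]
which is of the form $c\, f_q(a'n+b')$ with $a'=-a$, $b'=a(N+1)+b$, and the same odd coefficient $c$; hence $\tilde F(\cdot, j\cdot + i) \in \Scal[N]$. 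Note that Definition \ref{Sp} places no sign restriction on the coefficient $a$, so the negative slope causes no trouble.

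For part (iii), let $c \in \Z/q\Z$ be odd and set $\tilde F \coloneqq cF$. For any line of slope $j$ and intercept $i$, write $F_{i,j}(n) = c'\, f_q(an+b)$ with $c'$ odd; then $(cF)_{i,j}(n) = (cc')\, f_q(an+b)$, and $cc'$ is again odd in $\Z/q\Z$ (the odd elements of $\Z/q\Z$ form a multiplicative group since $q$ is a power of two), so the product again lies in $\Scal[N]$. This is completely mechanical and there is no real obstacle; the only subtlety to mention is that all three operations preserve the parity of the multiplicative coefficient, which is exactly the condition demanded by Definition \ref{Sp}.
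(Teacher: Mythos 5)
Your proof is correct and matches the paper's treatment, which simply declares (i) and (ii) immediate from Definition \ref{sudoku-solution} and (iii) a consequence of Definition \ref{Sp} (the paper invokes \eqref{mult-2}, but since Definition \ref{Sp} already allows an arbitrary odd scalar $c$, your direct multiplication of scalars is equally valid). The only micro-detail left implicit is that each transformed function still takes values in $\Sigma$ (i.e., avoids $0$), which for (iii) follows from the invertibility of odd elements of $\Z/q\Z$ that you already note.
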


\begin{proof} Claims (i), (ii) are immediate from Definition \ref{sudoku-solution}, while claim (iii) follows from this definition together with Definition \ref{Sp} and \eqref{mult-2}.
\end{proof}

In the remaining sections of the paper, we will prove the non-periodicity of Sudoku solutions with good columns:

\begin{theorem}[Non-periodicity of Sudoku solutions with good columns]\label{main-sudoku}  Let $q = 2^{s_0}$ be sufficiently large.  Then every Sudoku solution with good columns is non-periodic.
\end{theorem}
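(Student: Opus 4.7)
The plan is to prove aperiodicity by infinite descent on the common column period, driven by the self-similarity $f_q(qn) = f_q(n)$ of the basic limit-periodic function. The template is the easy aperiodicity of $f_q$ itself: a period $T$ of $f_q$ must be divisible by $q$ (from \eqref{init}, since $f_q(T+1) = f_q(1) = 1$ forces $T \equiv 0 \pmod q$), after which $T = qS$ is collapsed to a period $S$ of $f_q$ via \eqref{mult-1}, yielding descent. To lift this to an arbitrary Sudoku solution $F$ with good columns, I would assume a common column period $T$ and produce a new Sudoku solution $\tilde F$, also with good columns, whose common column period divides $T/q$, then iterate to reach a contradiction.

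The first step is a fine structural analysis of $\Scal[N]$. Each $g \in \Scal[N]$ with $g(n) = c f_q(an+b)$ carries an affine approximation $\alpha_g(n) \coloneqq c(an+b) \bmod q$ and a bad coset $\Gamma_g \coloneqq \{n : q \mid an+b\}$ (an arithmetic progression of common difference $q/\gcd(a,q)$, or empty, of upper density at most $1/\gcd(a,q)$) outside of which $g = \alpha_g$; on $\Gamma_g$ the function $g$ is, after a natural reparametrization, itself a rescaled element of $\Scal$ at the next $q$-adic scale. The key rigidity I would extract is that two elements of $\Scal[N]$ agreeing on a sufficiently dense subset of $\{1,\dots,N\}$ must share the same affine approximation and hence the same bad coset. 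Combining this with the good-columns condition along rows, diagonals, anti-diagonals and more general non-vertical lines (using the affine invariances \propref{inv}(i)), I aim to show that the column-wise permutations $\sigma_n$ depend affinely on $n$, and in particular that the free residue $r_n \coloneqq \sigma_n^{-1}(0) \bmod q$ is itself an affine function of $n$. A shear transformation from \propref{inv}(i) then normalizes $r_n \equiv r_*$ to a constant.

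With this normalization in hand, define $\tilde F(n,m) \coloneqq \lambda\, F(n, qm + r_*)$ for a suitable odd normalizing scalar $\lambda \in \Z/q\Z$, chosen via \propref{inv}(iii) and the multiplicativity \eqref{mult-2} to keep the scaling constants in \defref{Sp} odd under the descent. Every non-vertical line of $\tilde F$ corresponds to sampling a non-vertical line of $F$ at step $q$ in the $m$-direction, and by $f_q(qn)=f_q(n)$ such a subsample lies again in $\Scal[N]$; so $\tilde F$ is a Sudoku solution. The good-columns property also transfers: the ``free'' residue slice of each column of $F$ inherits a new permutation-mod-$q$ structure at the next $q$-adic scale from the cross-line constraints, giving $\tilde F$ its own good columns. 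Because the non-free residues of $F$'s columns are already $q$-periodic via $\sigma_n$, the common period $T$ of $F$ must satisfy $q \mid T$, and $\tilde F$ has common period dividing $T/q$. Iterating $O(\log_q T)$ times, the period collapses to $1$, forcing $\tilde F$ to be columnwise constant; but then its $\tilde\sigma_n$ is constant, contradicting the permutation requirement in the good-columns condition whenever $q \geq 2$.

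The main obstacle I anticipate lies in the middle step: proving that $\sigma_n$ (equivalently $r_n$) depends affinely on $n$ with \emph{no} exceptional cosets, and then verifying that $\tilde F$ genuinely inherits the good-columns property rather than some weaker permutation-like structure. The bad cosets of lines of different slopes have different common differences and interact in intricate ways, so one must combine many line constraints — likely of arbitrary rational slope, exploiting that $N = q^2$ is large enough for two sparse cosets to cover $\{1,\dots,N\}$ almost entirely — to eliminate all exceptions and propagate the affine-permutation structure cleanly through the rescaling. Careful parity bookkeeping of the scaling constants in \defref{Sp} throughout the descent, to maintain the ``odd $c$'' hypothesis at every scale, is a further technical hurdle that I expect to occupy a significant portion of the argument.
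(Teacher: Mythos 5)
Your overall architecture --- rigidity of $\Scal[N]$, normalization by shearing, then infinite descent via the vertical $q$-fold rescaling $F(n,qm+r_*)$ --- is exactly the paper's strategy (the rescaling is what the paper calls the ``Tetris move'', \propref{tetris-iterate}). However, the proposal leaves the hardest step as an acknowledged gap and contains one substantive structural error. The error: the global structure you can extract from a Sudoku solution is \emph{not} affine but only \emph{pseudo-affine}, i.e.\ of the form $An+Bm+C+D\frac{q}{4}m(m-n)$ as in \eqref{psid}. A quadratic correction of this shape is affine along every non-vertical line (because $q\binom{n}{2}=0 \Mod q$), so no amount of combining line constraints of various slopes can rule it out; your claim that $\sigma_n$ and $r_n$ ``depend affinely on $n$'' is therefore false as stated, and the shear normalization must be carried out for pseudo-affine functions (the paper's \propref{norm}, where one checks the zero set of a pseudo-affine function with odd vertical coefficient is still a full coset $\{m=A'n+C' \Mod q\}$). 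The middle step you flag as the main obstacle is indeed where the work lies, and it is not done by ``two elements of $\Scal[N]$ agreeing on a dense set share an affine approximation'' alone: the paper needs a counting argument (good columns $\Rightarrow$ weak digit equidistribution $\Rightarrow$ high-order lines are rare, Lemmas \ref{digitdensity} and \ref{rare}) to locate a single $8\times 8$ square avoiding all bad cosets of slopes $-1,0,1$, a concatenation lemma (\lemref{concat}) to get local pseudo-affine structure there, and then the rigidity statement \propref{rigid-out} to propagate that structure row by row across the whole board.

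A second, more fixable issue is your plan to show that $\tilde F$ ``inherits good columns.'' The paper does not do this and it is not clear it is true; good columns is used only once, at the top of the induction, to launch the equidistribution argument and to prove the vertical coefficient $B$ is odd (\propref{is-odd}). After the Tetris move the paper instead re-derives weak digit equidistribution for $F_*$ \emph{directly}, from the explicit formula \eqref{eqs} for $F$ along diagonals in normal form (each digit occurs at most twice among the $q$ samples $f_q(\tilde a_{i,1}j+\tilde c_{i,1})$ since $\tilde a_{i,1}$ is odd), and then recovers the oddness of the new vertical coefficient $B_*$ by a pigeonhole comparison of $f_q(\tilde a j+\tilde c)$ with $\Psi_*$ along such a progression --- not from any good-columns property of $F_*$. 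Your descent termination is also slightly off: one never reaches period $1$; rather, every normal-form solution has period divisible by $q$, so the period admits infinitely many factors of $q$, which is the contradiction. In short: right skeleton, but the pseudo-affine correction, the equidistribution-plus-concatenation machinery, and the correct invariant to carry through the descent (weak digit equidistribution and odd vertical coefficient, not good columns) are all missing and are precisely the substance of the proof.
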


\begin{remark}  A remarkable feature of this result is that while the property of being a Sudoku solution with good columns is ``local'' in the sense that it can be verified by considering a bounded number of cells of the solution at a time, the conclusion is ``global'' in that it genuinely involves an infinite number of cells, and is not obviously verifiable in a bounded complexity fashion.  Namely, Sudoku puzzles have enough rigidity in them to achieve non-trivial constraints on the solutions, but are not so rigid that they cannot be solved. An analogous claim can be proven for odd primes $q$ as well, and is in fact slightly simpler (for instance, the pseudo-affine functions appearing in Section \ref{conclusion} can be replaced by genuinely affine functions), but we will not be able to use that variant of the above theorem for our purposes, and so leave the details of this variant to the interested reader.
\end{remark}

We assume this theorem for now and show how it implies Theorem \ref{main-aperiod-tuple-weak} (and thus Theorem \ref{main} and Corollary \ref{main-cor}).

\begin{proof}[Proof of Theorem \ref{main-aperiod-tuple-weak} assuming Theorem \ref{main-sudoku}]  We will show that Sudoku solutions with good columns can be encoded as a weakly expressible property. Assuming \thmref{main-sudoku}, this will give an aperiodic weakly expressible property, proving \thmref{main-aperiod-tuple-weak}.   Let $s_0, N$ be such that Theorem \ref{main-sudoku} holds.  We introduce the binary encoding map $B \colon \{0,1\}^{s_0} \to \Z/q\Z$ defined by
$$ B( \epsilon_0, \dots, \epsilon_{s_0-1} ) \coloneqq \epsilon_0 + 2 \epsilon_1 + \dots + 2^{s_0-1} \epsilon_{s_0-1};$$
this is of course a bijection.  

In order to motivate the construction below, we begin with some preliminary calculations.  Suppose one is given a Sudoku solution $F: \mathbb{B} \to \Sigma$ with good columns, thus there exist permutations $\sigma_1,\dots,\sigma_N \colon \Z/q\Z \to \Z/q\Z$ obeying the following properties:
\begin{itemize}
    \item For every $j,i \in \Z$, the function $n \mapsto F(n,jn+i)$ for $n=1,\dots,N$ lies in  $\mathcal{S}[N]$.
    \item One has $F(n,m) = \sigma_n(m \Mod{q})$ whenever $\sigma_n(m \Mod{q})$ is non-zero.
\end{itemize}
We encode this data as a collection of boolean functions $\beta_{a,b,n} \colon \Z^2 \to \{0,1\}$ for $a=0,1$, $b=0,\dots,s-1$, and $n=1,\dots,N$ by enforcing the equations
\begin{equation}\label{binary-0}
B( \beta_{0,0,n}(i,j), \dots, \beta_{0,s-1,n}(i,j)) = \sigma_n(jn+i \Mod{q} )
\end{equation}
and
\begin{equation}\label{binary}
B( \beta_{1,0,n}(i,j), \dots, \beta_{1,s-1,n}(i,j)) = F(n,jn+i)
\end{equation}
for $i,j \in \Z$ and $n=1,\dots,N$.  One then observes the following claims:
\begin{itemize}
    \item[I.] For each $(i,j) \in \Z^2$, the sequence $n \mapsto B( \beta_{1,0,n}(i,j), \dots, \beta_{1,s-1,n}(i,j))$ lies in $\mathcal{S}[N]$.
    \item[II.] If $n=1,\dots,N$ and $(i,j) \in \Z^2$ is such that $B( \beta_{0,0,n}(i,j), \dots, \beta_{0,s-1,n}(i,j))$ is non-zero, then $B( \beta_{1,0,n}(i,j), \dots, \beta_{1,s-1,n}(i,j)) = B( \beta_{0,0,n}(i,j), \dots, \beta_{0,s-1,n}(i,j))$.
    \item[III.]  For each $a=0,1$, $b=1,\dots,s$ and $n=1,\dots,N$, the function $\beta_{a,b,n}$ is $\langle (-n,1)\rangle$-periodic.
    \item[IV.] For each $n=1,\dots,N$, the tuple $(\beta_{0,0,n}, \dots,\beta_{0,s-1,n})$ is a boolean periodized permutation in the direction $(1,0)$. 
\end{itemize}
As it turns out, the converse also holds: if $\beta_{a,b,n} \colon \Z^2 \to \{0,1\}$ are boolean functions for $a=0,1$, $b=0,\dots,s-1$, and $n=1,\dots,N$ obeying the properties I-IV, then they will arise from a Sudoku solution $F$ with good columns via the relations \eqref{binary-0}, \eqref{binary}.  From the machinery established in previous sections, the properties I-IV are essentially weakly expressible (after some technical modifications), and will form the basis of our encoding.

We now turn to the details.  In the space $\{0,1\}^{2s_0N}$ of tuples 
$$(\omega_{a,b,n})_{(a,b,n) \in \Wcal}$$
of boolean variables $\omega_{a,b,n} \in \{0,1\}$ indexed by the $2s_0N$-element set
$$ \Wcal \coloneqq \{0,1\} \times \{0,\dots,s_0-1\} \times \{1,\dots,N\},$$
we define the subset $\Omega$ of those tuples in $\{0,1\}^{2s_0N}$ obeying the following axioms:
\begin{itemize}
    \item[(i)]  (Encoded Sudoku solution)  The sequence $n \mapsto B(\omega_{1,0,n},\dots,\omega_{1,s_0-1,n})$ lies in $\Scal[N]$.
    \item[(ii)]  (Encoded good columns)  If $n=1,\dots,N$ is such that $B(\omega_{0,0,n},\dots,\omega_{0,s_0-1,n}) \neq (0,\dots,0)$, then 
    $B(\omega_{1,0,n},\dots,\omega_{1,s_0-1,n}) = B(\omega_{0,0,n},\dots,\omega_{0,s_0-1,n})$ (or equivalently that $\omega_{0,b,n} = \omega_{1,b,n}$ for $b=0,\dots,s_0-1$).
\end{itemize}
(Compare with the properties I, II discused above.)  The set $\Omega$ is not symmetric, sowe also introduce the symmetrized counterpart $\tilde \Omega \subset \{0,1\}^{1+2s_0N}$ in $\{0,1\}^{1+2s_0N}$ consisting of those tuples $(\omega_*, (\omega_{a,b,n})_{(a,b,n) \in \Wcal})$ such that
$$ (R_{\omega_*}(\omega_{a,b,n}))_{(a,b,n) \in \Wcal} \in \Omega$$ where $R_a$, $a=0,1$ is as in \eqref{reflection}.

We consider the group $G \coloneqq \Z^2 \times (\Z/2\Z)^3$, which contains in particular the three elements $e \coloneqq ((0,0), (1,0,0))$, $e' \coloneqq ((0,0),(0,1,0))$, $e'' \coloneqq ((0,0),(0,0,1))$ which generate a copy of $(\Z/2\Z)^3$.  We now introduce a property $S$, which aims to encode Sudoku solutions with good columns. Let $M$ be a natural number that is sufficiently large depending on $s_0,N$, and let $S$ denote the $(G, (\Z/2^M\Z)^{1+2s_0N})$-property that a tuple  $(\alpha_*, (\alpha_{a,b,n})_{(a,b,n) \in \Wcal})$ of functions $\alpha_*, \alpha_{a,b,n} \colon G \to \Z/2^M\Z$ obeys the following axioms.
\begin{itemize}
    \item[(a)]  $(\alpha_*, (\alpha_{a,b,n})_{(a,b,n) \in \Wcal})$ obeys property $P_{\tilde \Omega}$.
    \item[(b)]  For each $a=0,1$; $b=0,\dots,s_0-1$; $n=1,\dots,N$, the function $\alpha_{a,b,n}$ is $\langle ((-n,1),(0,0,0)) \rangle$-periodic.
    \item[(c)]  For each $n=1,\dots,N$, the tuple $(\alpha_{0,0,n},\dots,\alpha_{0,s_0-1,n})$ is a boolean periodized permutation in the direction $((1,0),(0,0,0))$.
\end{itemize}
The axioms (b), (c) should be compared with the properties III, IV discussed previously.  By Proposition \ref{sym-bool}, Corollary \ref{express-period}, Proposition \ref{perm-express}, and Lemma \ref{closure}, $S$ is a weakly expressible property.  It will thus suffice to show that $S$ is aperiodic.

We first show that there is at least one tuple $(\alpha_*, (\alpha_{a,b,n})_{(a,b,n) \in \Wcal})$ obeying $S$.  Let $F(n,m)$ be a Sudoku solution with good columns (for instance, one can take the standard Sudoku solution from Example \ref{Standard-solution}), and let $\sigma_1,\dots,\sigma_N \colon \Z/q\Z \to \Z/q\Z$ be the associated permutations.  We represent this data via the boolean functions $\beta_{a,b,n} \colon \Z^2 \to \{0,1\}$ for $(a,b,n) \in \Wcal$, defined by the binary encodings \eqref{binary-0}, \eqref{binary}.  By properties III, IV, the $\beta_{a,b,n}$ are $\langle (-n,1)\rangle$-periodic, and the tuple $(\beta_{0,0,n},\dots,\beta_{0,s_0-1,n})$ obeys the boolean permutation property in the direction $(1,0)$ for each $n=1,\dots,N$.  By properties I, II, we see that the tuple $(\beta_{a,b,n}(i,j))_{(a,b,n) \in \Wcal}$ lies in $\Omega$ for each $(i,j) \in \Z^2$.  If we now define the tuple 
$(\alpha_*, (\alpha_{a,b,n})_{(a,b,n) \in \Wcal})$ of functions $\alpha_*, \alpha_{a,b,n} \colon G \to \Z/2^M\Z$ by the formulae
$$ \alpha_*( x, (\epsilon,\epsilon',\epsilon'')) \coloneqq \epsilon$$
and
$$ \alpha_{a,b,n}( x, (\epsilon, \epsilon', \epsilon'')) \coloneqq R_\epsilon(\beta_{a,b,n}(x))$$
for all $x \in \Z^2$, $a=0,1$, $b=0,\dots,s_0-1$, $n=1,\dots,N$,$\epsilon, \epsilon, \epsilon'' \in \{0,1\}$ (where $R_\epsilon$ is as in \eqref{reflection}), it is a routine matter to verify that this tuple obeys property $S$.

Conversely, suppose that $(\alpha_*, (\alpha_{a,b,n})_{(a,b,n) \in \Wcal})$ obeys $S$.  By (a), we can write $\alpha_* = a'_* + b' \tilde \alpha_*$ and $\alpha_{a,b,n} = a'_{a,b,n} + b' \tilde \alpha_{a,b,n}$ for some $a'_*, a'_{a,b,n}, b' \in \Z/2^M\Z$ with $b'$ odd, and some $\langle e',e''\rangle$-periodic $(e,\{0,1\})$-boolean functions $\tilde \alpha_*,\tilde \alpha_{a,b,n} \colon G \to \{0,1\}$ such that
\begin{equation}\label{tax}
(\tilde \alpha_*(\tilde x),(\tilde \alpha_{a,b,n}(\tilde x))_{(a,b,n) \in \Wcal}) \in \tilde \Omega
\end{equation}
for all $\tilde x \in G$.  If we define the functions $\beta_{a,b,n} \colon \Z^2 \to \{0,1\}$ by the formula
$$ \beta_{a,b,n}(x) \coloneqq R_{\tilde \alpha_*(x,(0,0,0))}(\tilde \alpha_{a,b,n}(x,(0,0,0)))$$
for all $(a,b,n) \in \Wcal$, $x \in \Z^2$, we have
\begin{equation}\label{bad}
(\beta_{a,b,n}(x))_{(a,b,n) \in \Wcal} \in \Omega
\end{equation}
for all $x \in \Z^2$.  From axiom (b) we see that each $\beta_{a,b,n}$ is $(-n,1)$-periodic, and from axiom (c) we see that for each $n=1,\dots,N$, the tuple $(\beta_{0,0,n},\dots,\beta_{0,s_0-1,n})$ is a boolean periodized permutation in the direction $(1,0)$.  From the $(-n,1)$-periodicity of the $\beta_{a,b,n}$, we may define functions $F_a \colon \mathbb{B} \to \Z/2^M\Z$ for $a=0,1$ by requiring that
$$ B( \beta_{a,0,n}(i,j),\dots, \beta_{a,s_0-1,n}(i,j) ) = F_a(n, jn+i)$$
for all $n=1,\dots,N$ and $(i,j) \in \Z^2$.  From \eqref{bad} we see that $F_1$ is a Sudoku solution (in particular, it avoids zero and takes values in $\Sigma$), and also that $F_1(n,m) = F_0(n,m)$ whenever $F_0(n,m)$ is non-zero.  Since $(\beta_{0,0,n},\dots,\beta_{0,s_0-1,n})$ is a boolean periodized permutation in the direction $(1,0)$, we see that for all $(n,m) \in \mathbb{B}$, the $q$ points $F_0(n,m),\dots,F_0(n,m+q-1)$ take on distinct values of $\Z/q\Z$, and thus we must have $F_0(n,m) = \sigma_n(m \Mod{q})$ for some permutation $\sigma_n \colon \Z/q\Z \to \Z/q\Z$ (cf. Example \ref{perm-expr}).  Thus $F_1$ has good columns, and is thus non-periodic thanks to Theorem \ref{main-sudoku}.    If $(\alpha_*, (\alpha_{a,b,n})_{(a,b,n) \in \Wcal})$ were periodic, $F_1$ would be periodic. We then conclude that $(\alpha_*, (\alpha_{a,b,n})_{(a,b,n) \in \Wcal})$ is non-periodic.  Thus, property $S$ is aperiodic as required.
\end{proof}

\begin{remark}\label{duality}  The encoding \eqref{binary} resembles the classical projective duality between points and lines in the plane.  Indeed, a non-vertical line $\ell_{i,j} = \{ (n,jn+i): n=1,\dots,N\}$ in the Sudoku board $\mathbb{B}$ corresponds to a point $(i,j)$ in the lattice $\Z^2$; the various boolean expressions $\tilde \alpha_{a,b,n}(i,j,(\epsilon,\epsilon',\epsilon''))$ are encoding different ``bits'' of the a Sudoku puzzle (and its attendant permutations) on this line $\ell_{i,j}$.
\end{remark}

It remains to prove Theorem \ref{main-sudoku}.  This is the objective of the remaining sections of the paper.

\section{Basic properties of $2$-adic structured functions and Sudoku solutions}

We begin by analyzing the class $\Scal[N]$ defined in Definition \ref{Sp}.   We can largely describe the behavior of an element $g$ of $\Scal[N]$ by some statistics which we call the ``order'', ``step'', ``bad coset'', and ``associated affine function'' of $g$. 

\begin{lemma}[Statistics of a $2$-adic function]\label{stats}  To every $g \in \Scal[N]$ one can find an \emph{order} $\ord_g \in \{-\infty,0,\dots,s_0-1\}$, a \emph{step} $s_g \in \Z/q\Z$, a \emph{bad coset} $\Gamma_g \subset \Z$, and an \emph{associated affine function} $\alpha_g \colon \Z \to \Z/q\Z$, obeying the following axioms:
\begin{itemize}
    \item [(i)]  $\alpha_g \colon \Z \to \Z/q\Z$ is not identically zero, and is a function of the form $\alpha_g(n) = s_g n + c_g$ for some $c_g \in \Z/q\Z$, for all $n \in \Z$, thus the step $s_g$ is the slope of the affine function $\alpha_g$.
    \item[(ii)]  The bad coset $\Gamma_g \subsetneq \Z$ is the zero set $\{n \in \Z: \alpha_g(n) = 0\}$ of $\alpha_g$; it is empty if $\ord_g = -\infty$, and is a coset of $2^{-\ord_g} q \Z$ otherwise.  (In particular, the upper density of $\Gamma_g$ is equal to $2^{\ord_g}/q$, and if $\ord_g \geq 0$, then $s_g$ is an odd multiple of $2^{\ord_g}$.)
    \item[(iii)]  One has $g(n) = \alpha_g(n)$ whenever $\alpha_g(n) \neq 0$; in other words, $g$ agrees with the affine function $\alpha_g$ outside of the bad coset $\Gamma_g$.
    \item[(iv)] One can find integers $a, b \in \Z$   such that $\alpha_g(n) = a n + b \Mod{q}$ and $g(n) = f_q(a n + b)$ for all $n \in \Z$.  (In particular, this implies that $a = s_g \Mod{q}$, so if $\ord_g \geq 0$, $a$ is an odd multiple of $2^{\ord_g}$ and $b$ is divisible by $2^{\ord_g}$.)
\end{itemize}
\end{lemma}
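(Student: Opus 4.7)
The plan is to work with a normalized representation of $g$. Starting from any expression $g(n) = cf_q(\tilde a n + \tilde b)$ with $c$ odd, I would apply \eqref{mult-1} repeatedly to divide $(\tilde a, \tilde b)$ by any common factor of $q$ without changing $g$. This either ends with $\tilde a = \tilde b = 0$ (so $g \equiv c$ is constant, handled by declaring $\alpha_g \equiv c$, $s_g = 0$, $\Gamma_g = \emptyset$, $\ord_g = -\infty$, and $a = 0$, $b = c$ in (iv), using $f_q(c) = c$ since $c$ is odd), or with $(\tilde a, \tilde b) \neq (0,0)$ and not both divisible by $q$. In this generic case I set $\alpha_g(n) := c(\tilde a n + \tilde b) \Mod{q}$, $s_g := c\tilde a \Mod{q}$, and $\Gamma_g := \{n \in \Z : \alpha_g(n) = 0\}$. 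Vanishing of $\alpha_g$ would force $q \mid \tilde a$ and $q \mid \tilde b$ (since $c$ is odd), contradicting the normalization, which gives (i).

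For (ii) I would write $\tilde a = 2^k \tilde a'$ with $\tilde a'$ odd (setting $k = \infty$ when $\tilde a = 0$) and analyze the linear congruence $\tilde a n + \tilde b \equiv 0 \Mod{q}$. This is solvable precisely when $2^{\min(k,s_0)} \mid \tilde b$, and then its solution set is a coset of $2^{s_0 - k}\Z$. Combined with the normalization, only two possibilities survive: either $\ord_g = -\infty$ with $\Gamma_g = \emptyset$, or $\ord_g = k \in \{0,\dots,s_0-1\}$ with $\Gamma_g$ a coset of $2^{-\ord_g}q\Z$ of upper density $2^{\ord_g}/q$ and $s_g = 2^k(c\tilde a')$ an odd multiple of $2^{\ord_g}$. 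Axiom (iii) is then immediate: for $n \notin \Gamma_g$ one has $q \nmid \tilde a n + \tilde b$, hence $f_q(\tilde a n + \tilde b) = (\tilde a n + \tilde b) \Mod{q}$, and so $g(n) = c(\tilde a n + \tilde b) \Mod{q} = \alpha_g(n)$.

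The delicate step is axiom (iv), where I need integers $a,b$ producing a uniform formula $g(n) = f_q(an+b)$ (not merely $cf_q(\cdot)$). The natural guess $a = c\tilde a$, $b = c\tilde b$ works wherever $\tilde a n + \tilde b \neq 0$ via \eqref{mult-2}, but fails at any integer $n_0$ with $\tilde a n_0 + \tilde b = 0$: there $g(n_0) = cf_q(0) = c$ while $f_q(an_0 + b) = f_q(0) = 1$. To repair this I would shift to $b := c\tilde b + cq^K$ for a $K$ chosen large enough that $q^K > |\tilde a n + \tilde b|$ for every $n \in \{1,\dots,N\}$. The key auxiliary fact is a shift-invariance statement $f_q(m + q^K) = f_q(m)$ whenever $0 < |m| < q^K$, which I would prove by inspecting the base-$q$ expansion: for $m > 0$ the digit at position $K$ becomes $1$ and the lower digits of $m$ are unchanged, while for $m < 0$ a routine $q$'s-complement computation shows that both the $q$-adic valuation and the last nonzero digit are preserved. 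Combining this with \eqref{mult-2} and \eqref{mult-1}, I get $f_q(an+b) = f_q(c(\tilde a n + \tilde b + q^K)) = cf_q(\tilde a n + \tilde b) = g(n)$ away from $n_0$, while at $n_0$ one has $f_q(cq^K) = f_q(c) = c = g(n_0)$; the formula $n \mapsto f_q(an+b)$ then provides the extension of $g$ to all of $\Z$ demanded by (iv). The parenthetical congruence $a \equiv s_g \Mod{q}$ and the $2^{\ord_g}$-divisibility of both $a$ and $b$ when $\ord_g \geq 0$ are then transparent, since $a = 2^k(c\tilde a')$ with $c\tilde a'$ odd and both summands of $b = c\tilde b + cq^K$ are divisible by $2^k$. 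The main obstacle is the shift-invariance lemma; everything else is essentially bookkeeping once the correct normalized representation has been chosen.
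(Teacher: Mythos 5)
Your proposal is correct, and it follows the same basic route as the paper: normalize the representation $g(n)=cf_q(\tilde a n+\tilde b)$ using \eqref{mult-1} and \eqref{mult-2}, read off $\alpha_g$, $s_g$, $\Gamma_g$, $\ord_g$ from the normalized coefficients, and check the axioms via \eqref{init} and the structure of the linear congruence $\tilde a n+\tilde b\equiv 0\Mod{q}$. The paper compresses the verification of (i)--(iv) to ``routine'' after its normalizations, so the main point of comparison is how the single troublesome cell $n_0$ with $\tilde a n_0+\tilde b=0$ is treated. The paper disposes of it early by replacing $n$ with $n+q^r$, i.e.\ shifting $b$ by $aq^r$; your version instead keeps $c$ until the end and shifts $b$ by $cq^K$, which forces the value at $n_0$ to be $f_q(cq^K)=f_q(c)=c=cf_q(0)$, exactly matching $g(n_0)$. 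This is the more robust choice: the shift by $aq^r$ only reproduces $g(n_0)=c\cdot f_q(0)=c$ when $f_q(a)=1$, so your repair is doing real work that the paper's sketch glosses over. One stylistic remark: your ``shift-invariance'' auxiliary fact needs no base-$q$ digit or complement analysis for negative $m$ --- writing $m=q^k m'$ with $q\nmid m'$ and $k<K$ gives $m+q^K=q^k(m'+q^{K-k})$ with $m'+q^{K-k}\equiv m'\Mod{q}$, which proves the claim uniformly in the sign of $m$ directly from the definition $f_q(q^km')=m'\Mod{q}$. Everything else (the constant case, the identification of $\Gamma_g$ as a coset of $2^{-k}q\Z$, the parity and divisibility claims for $s_g$, $a$, $b$) checks out.
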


See Figures \ref{fig:badcoset}, \ref{fig:badcosetorder1}, \ref{fig:nobadcoset13}, \ref{fig:nobadcoset} for some illustrations of these statistics.
We remark that the elements of $\Scal[N]$ of very high order (close to $s_0$) will be problematic for our analysis, because the bad coset has large upper density in those cases; fortunately, we will be able to show that this case occurs quite rarely for our applications.

\begin{proof} If $g \in \Scal[N]$, then by definition there exist integers $a,b,c$ with $c$ odd such that $g(n) = cf_q(an+b)$.  Since $f_q(0n+0) = f_q(0n+1)=1$, we may assume without loss of generality that $a,b$ do not both vanish.  Noting that $f_q(an+b) = f_q(a(n+q^r)+b)$ for all $n=1,\dots,N$ if $r$ is large enough, we may assume without loss of generality that $an+b$ is non-vanishing on $\{1,\dots,N\}$.  By \eqref{mult-2}, we may replace $a,b,c$ by $ca,cb,1$ and assume without loss of generality that $c=1$.  By \eqref{mult-1} we may assume that $a,b$ are not simultaneously divisible by $q$.

We then set $\alpha_g(n) \coloneqq an+b \Mod{q}$, $s_g \coloneqq a \Mod{q}$, $\Gamma_g \coloneqq \{n \in \Z: \alpha_g(n)=0\}$.  If $\Gamma_g$ is empty we set $\ord_g = -\infty$, otherwise we set $\ord_g$ equal to the largest number of powers of two that divide $a$.  (This order cannot exceed $s_0-1$, otherwise $\alpha_g$ would be constant and non-zero, and so $\Gamma_g$ would be empty).  The verification of the axioms (i)-(iv) is then routine.
\end{proof}

In principle, it is possible that the order $\ord_g$, step $s_g$, bad coset $\Gamma_g$, or associated affine function $\alpha_g$ produced by this lemma are not unique, because there are multiple ways to express $g$ in the form $f_q(an+b)$.  For instance, for $n=1,\dots,N$, the function $f_q(n)$ can also be written as $f_q(n+q^m)$ for any $m$ with $q^m>N$, or as $f_q(q^r n)$ for any $r \geq 1$.  We will be able to exclude this scenario, thanks to (a modification of) the following useful proposition.

\begin{proposition}[Rigidity outside of a bad coset]\label{rigid-out}
Let $\{n_0,\dots,n_0+7\}$ be an interval of length $8$ in $\{1,\dots,N\}$, and let $\alpha \colon \Z \to \Z/q\Z$ be an affine function.  Suppose that $g(n) = f_q(an+b)$ is an element of $\Scal[N]$ such that $g(n) = \alpha(n)$ whenever $n \in \{n_0,\dots,n_0+7\}$ is such that $\alpha(n) \neq 0$.  Then in fact we have $g(n) = \alpha(n)$ whenever $n \in \{1,\dots,N\}$ and $\alpha(n) \neq 0$.
\end{proposition}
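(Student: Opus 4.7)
The plan is to reduce the claim to showing that $\alpha$ equals the associated affine function $\alpha_g$ provided by Lemma~\ref{stats}, since then part~(iii) of that lemma gives $g(n) = \alpha_g(n) = \alpha(n)$ for every $n \in \{1, \ldots, N\}$ with $\alpha(n) \neq 0$, which is the desired conclusion. I may assume $\alpha \not\equiv 0$, otherwise both the hypothesis and the conclusion are vacuous. Writing $Z_1 := \alpha^{-1}(0) \subset \Z$ and $I := \{n_0, \ldots, n_0 + 7\}$, the hypothesis combined with Lemma~\ref{stats}(iii) yields
\[ \alpha(n) = \alpha_g(n) \qquad \text{for every } n \in I \setminus (Z_1 \cup \Gamma_g), \]
so the task becomes promoting this partial agreement to full agreement of the two affine functions $\alpha, \alpha_g \colon \Z \to \Z/q\Z$.

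In the main regime where $\ord_g \leq s_0 - 3$ and the slope $s$ of $\alpha$ satisfies $\gcd(s, q) \leq 2^{s_0-3}$, Lemma~\ref{stats}(ii) says that each of $\Gamma_g$ and $Z_1$ is either empty or a coset of a subgroup of $\Z$ of index $\geq 8$, so each meets the length-$8$ window $I$ in at most one point. The agreement set therefore contains at least $6$ of the $8$ elements of $I$, and in particular contains two elements $n_1, n_2$ of opposite parity; then $n_2 - n_1$ is odd and hence a unit in $\Z/q\Z$. An affine function on $\Z/q\Z$ vanishing at two such points must vanish identically, so $\alpha \equiv \alpha_g$ and we are done.

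The main obstacle is the remaining high-order regime in which $\ord_g \in \{s_0 - 2, s_0 - 1\}$ or $\gcd(s, q) \geq 2^{s_0 - 2}$; here $\Gamma_g$ or $Z_1$ may occupy an entire parity class within $I$, leaving an agreement set of at most four points of a single parity, which only determines $\alpha - \alpha_g$ modulo a non-trivial residual subgroup of affine functions. To rule out the spurious candidates, I exploit the explicit form $g(n) = f_q(an+b)$: on $\Gamma_g$ one has $q \mid an+b$, and $g(n)$ equals the last nonzero base-$q$ digit of $an+b$, which is by definition an \emph{odd} element of $\Z/q\Z$. On the other hand, in the high-order regime the restriction of $\alpha$ to $\Gamma_g \cap I$ is forced (by the high $2$-adic valuation of its slope) to be nearly constant and typically of even value, and a short parity comparison across several cells of $\Gamma_g \cap I$ contradicts the odd values of $g$ unless $\alpha$ in fact coincides with $\alpha_g$. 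In the few genuinely degenerate sub-cases, $\alpha$ and $\alpha_g$ already agree as functions on $\alpha^{-1}(\Z/q\Z \setminus \{0\}) \cap \{1, \ldots, N\}$ even though they differ as affine functions on $\Z$, and the desired identity $g = \alpha$ there holds by direct inspection.
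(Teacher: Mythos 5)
Your overall skeleton matches the paper's: first dispose of the generic case by finding two points of opposite parity in the window where $\alpha$ and $\alpha_g$ both agree with $g$ (so that the odd, hence invertible, difference of arguments forces $\alpha=\alpha_g$), and then handle the remaining high-order/degenerate configurations by a parity analysis of $g$ on the bad coset. That first step is fine. The problem is the engine you use in the second step: the claim that on $\Gamma_g$, where $q \mid an+b$, the value $g(n)=f_q(an+b)$ ``is by definition an odd element of $\Z/q\Z$'' is false. Writing $an+b=qm$, one has $g(n)=f_q(m)$, and the last nonzero base-$q$ digit of $m$ can be any element of $\Sigma=\Z/q\Z\setminus\{0\}$, even or odd (e.g.\ $f_q(2q)=2$). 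The correct and much weaker fact, which is what the paper actually uses, is that for two \emph{consecutive} points $n, n+2^{s_0-\ord_g}$ of the bad coset, the corresponding integers $m$ and $m+a'$ (with $a'$ odd) have opposite parities, so by \eqref{init} \emph{at least one} of the two values $g(n), g(n+2^{s_0-\ord_g})$ is odd. Since your contradiction in the high-order regime is driven entirely by ``the odd values of $g$'' on all of $\Gamma_g\cap I$, the argument as written does not go through; repairing it requires the finer case analysis on how $\Gamma\cup\Gamma_g$ can cover a parity class of the window (the paper distinguishes $\Gamma$ a coset of $2\Z$, $\Gamma_g$ a coset of $2\Z$, and the interleaved $4\Z$ case) and, in each case, pinning $\alpha$ down to the even value $q/2$ at two consecutive bad-coset points before invoking the one-of-two-is-odd fact.

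A secondary gap: your final sentence dismisses the genuinely degenerate sub-cases (where $\alpha\neq\alpha_g$ as affine functions yet the conclusion still holds, e.g.\ $g\equiv q/2$ versus $\alpha(n)=\frac{q}{2}n$) with ``by direct inspection.'' In the paper these sub-cases require an actual argument — for instance, showing that $\alpha_g$ takes the value $q/2$ at three of the four points of a parity class in the window and hence, being affine, on the entire complementary coset — so this step needs to be written out rather than asserted.
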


We caution that while the conclusion of this proposition strongly suggests that $\alpha_g = \alpha$, and the proof below will support this claim in most cases, there are a few cases in which this is not actually true.  For instance, if $g(n) = f_q(0n + \frac{q}{2}) = \frac{q}{2}$, then $g$ agrees with the affine function $\alpha(n) \coloneqq \frac{q}{2} n \Mod{q}$ whenever $\alpha(n) \neq 0$, but $\alpha_g(n) = \frac{q}{2}$ is not the same function as $\alpha$.  

\begin{proof} By definition, we can write (possibly non-uniquely)
$$ g(n) = f_q (an+b)$$
 and $\alpha_g(n) = a n +  b \Mod{q}$ for some $a,b\in\Z$, for all $n \in \Z$.  

We may assume that $\alpha$ does not vanish identically, as the claim is vacuously true otherwise.  We set $\Gamma \coloneqq \{n \in \Z: \alpha(n)=0\}$ to be the zero set of $\alpha$; this is either empty, or a coset of $2^j\Z$ for some $1 \leq j \leq s_0$.

First suppose that we can find elements $n,m \in \{n_0,\dots,n_0+7\}$ of different parity that lie outside $\Gamma \cup \Gamma_g$.  Then the affine functions $\alpha$ and $\alpha_g$ both agree at $n,m$; since the odd number $m-n$ is invertible in $\Z/q\Z$, this implies that $\alpha = \alpha_g$, and hence $g(n) = \alpha_g(n) = \alpha(n)$ whenever $n \in \{1,\dots,N\}$ lies outside $\Gamma_g = \Gamma$, giving the claim in this case.

Thus the only remaining cases are when $\Gamma \cup \Gamma_g$ occupies at least one full coset of $2\Z$.  There are three ways this can happen: either $\Gamma$ is a coset of $2\Z$, $\Gamma_g$ is a coset of $2\Z$, or $\Gamma_g = 4\Z + c$ and $\Gamma = 4\Z+c+2$ for some $c$.

We first exclude the latter case.  Without loss of generality we may place $c \in \{n_0,\dots,n_0+3\}$.  By hypothesis, we have $g(c) = \alpha(c)$ and $g(c+4) = \alpha(c+4)$; since $\alpha$ vanishes on $4\Z+c+2$, we conclude that $g(c) =  g(c+4) = \frac{q}{2} \Mod{q}$.  On the other hand, as $c \in \Gamma_g$, by \lemref{stats} (iv), we have  that $\ord_g=s_0-2$ (as $\Gamma_g$ to be a coset of $4\Z$), $a=\frac{q}{4}a'$ is an odd multiple of $\frac{q}{4}$
and we can write 
$ac+b = q m$ for some integer $m$ with $g(c) = f_q(m)$ and $g(c+4) = f_q(m+a')$.  As $a'$ is odd, by \eqref{init} we conclude that at least one of $g(c), g(c+4)$ is odd, a contradiction.  Hence this case cannot occur.

Now suppose that $\Gamma = 2\Z+c$ is a coset of $2\Z$.  We divide into two subcases:
\begin{itemize}
\item If $\Gamma_g \backslash \Gamma$ is empty or contained in a coset of $8\Z$, then $g(n) = \alpha(n) = \alpha_g(n)$ for at least three of the four points in $n \in \{n_0,\dots,n_0+7\} \cap (2\Z+c+1)$.  But $\alpha(n) = \frac{q}{2} \Mod{q}$ on these points, hence $\alpha_g$ equals $\frac{q}{2} \Mod{q}$ on these points also.  As $\alpha_g$ is affine, we conclude that $\alpha_g$ equals $\frac{q}{2} \Mod{q}$ on all of $2\Z+c+1$, and the claim follows.
\item If $\Gamma_g$ is a coset $2^i\Z + c'$ disjoint from $\Gamma = 2\Z+c$ for some $i=1,2$, then $\ord_g = s_0-i$, $a = 2^{\ord_g} a'$ is an odd multiple of $2^{\ord_g} = q/2^i$, and we may normalize $c' \in \{n_0,n_0+1,n_0+2,n_0+3\}$, hence by hypothesis
$$g(c')=\alpha(c');\quad g(c'+2^i)=\alpha(c'+2^i).$$  As $c' \in \Gamma_g$, we may write $$\frac{q}{2^i}a'c'+b=a c' + b = qm$$ for some integer $m$ and odd $a'$, with $g(c') = f_q(m) = \alpha(c')$ and $g(c'+2^i) = f_q(m+a') = \alpha(c'+2^i)$.  Since $a'$ is odd, by \eqref{init} at least one of $f_q(m), f_q(m+a')$ is odd; on the other hand, $\alpha$ is equal to $\frac{q}{2} \Mod{q}$ on $2\Z+c+1$, and hence at $c',c'+2^i$, giving a contradiction.
\end{itemize}

Finally, suppose that $\Gamma$ is not a coset of $2\Z$, but $\Gamma_g = 2\Z+c$ is.  We again divide into two subcases:
\begin{itemize}
    \item If $\Gamma \backslash \Gamma_g$ is empty or contained in a coset of $8\Z$, then by arguing as before we see that the affine function $\alpha$ equals $\frac{q}{2} \Mod{q}$ on at least three of the four points in $n \in \{n_0,\dots,n_0+7\} \cap (2\Z+c+1)$, and hence on all of $2\Z+c+1$.  Since $\Gamma$ is not a coset of $2\Z$, this forces $\alpha$ to be the constant function  $\frac{q}{2} \Mod{q}$, and hence $g$ is equal to $\frac{q}{2} \Mod{q}$ on all of $\{n_0,\dots,n_0+7\}$.  In particular, $g$ is even on $\{n_0,\dots,n_0+7\} \cap (2\Z+c)$ However, if we normalize $c \in \{n_0,n_0+1\}$, we observe that $\ord_g=s_0-1$, $a = \frac{q}{2} a'$ is an odd multiple of $\frac{q}{2}$, and $a c + b = qm$ for some integer $m$, with $g(c)=f_q(m)$ and $g(c+2)=f_q(m+a')$.  Thus at least one of $g(c), g(c+2)$ is odd, a contradiction.
    \item If $\Gamma$ is a coset $4\Z+c'$ disjoint from $\Gamma_g = 2\Z+c$, then $\alpha$ is always a multiple of $\frac{q}{4}$, so in particular $g$ is even on $\{n_0,\dots,n_0+7\} \cap (2\Z+c)$.  Now we can argue as in the previous case to obtain a contradiction.
\end{itemize}
\end{proof}

A variant of the argument gives

\begin{proposition}\label{n8} If $N \geq 8$, then an element $g$ of $\Scal[N]$ has a well-defined order, step, affine function, and bad coset.
\end{proposition}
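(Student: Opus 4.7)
The plan is to reduce uniqueness of all four statistics to uniqueness of the associated affine function $\alpha_g$, since by Lemma \ref{stats} the step $s_g$ is the slope of $\alpha_g$, the bad coset $\Gamma_g$ is its zero set, and the order $\ord_g$ is determined by $\Gamma_g$. I would then argue by contradiction: suppose $g \in \Scal[N]$ admits two normalized representations $g(n) = f_q(a_i n + b_i)$, $i = 1, 2$, yielding distinct affine functions $\alpha_1 \not\equiv \alpha_2 \Mod{q}$ with bad cosets $\Gamma_1, \Gamma_2$. Setting $\beta \coloneqq \alpha_1 - \alpha_2$, a non-zero affine function into $\Z/q\Z$, Lemma \ref{stats}(iii) applied to each representation forces $\beta$ to vanish on $V \coloneqq \{1,\dots,N\} \setminus (\Gamma_1 \cup \Gamma_2)$.

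If $V$ contains two integers at odd distance, then using that odd integers are units in $\Z/q\Z$, the slope of $\beta$ must vanish and therefore $\beta \equiv 0$, a contradiction. Hence $V$ must be confined to one parity class, meaning $\Gamma_1 \cup \Gamma_2$ covers an entire parity class of $\{1,\dots,N\}$. Since each $\Gamma_i$ is empty or a single coset of $2^{s_0-\ord_i}\Z$ contained in a single parity class, a density count exploiting $N \geq 8$ reduces the possibilities, up to relabeling the two representations and swapping parity, to exactly two configurations: (A) one $\Gamma_i$ is a full coset of $2\Z$, say $\Gamma_1 = 2\Z+1$ with $\ord_1 = s_0-1$; or (B) the two bad cosets are the two cosets of $4\Z$ in the odd integers, namely $\Gamma_1 = 4\Z+1$ and $\Gamma_2 = 4\Z+3$, both of order $s_0-2$.

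Case (B) is exactly the configuration excluded in the ``$\Gamma_g = 4\Z+c$, $\Gamma = 4\Z+c+2$'' subcase of the proof of Proposition \ref{rigid-out}. I plan to reuse that argument verbatim, applied at the point $1 \in \Gamma_1$: Lemma \ref{stats}(iv) for rep 1 expresses $g(1) = f_q(m)$ and $g(5) = f_q(m+a_1')$ with $a_1'$ odd, so by \eqref{init} at least one of $g(1), g(5)$ must be odd; but rep 2 forces $g(1) = g(5) = \alpha_2(1) = \alpha_2(5) = q/2$, which is even, yielding a contradiction.

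In case (A), the identity $\alpha_1 = \alpha_2$ on the even integers of $\{1,\dots,N\}$, combined with $\alpha_1 \equiv q/2$ on evens (which follows from $\Gamma_1 = 2\Z+1$ and $\ord_1 = s_0-1$, forcing $\alpha_1(n) = (q/2)(n+1) \Mod{q}$), pins $\alpha_2$ down as an affine function to exactly two possibilities: either $\alpha_2(n) = (q/2)(n+1)$, which gives $\alpha_2 \equiv \alpha_1 \Mod{q}$ and contradicts the hypothesis, or $\alpha_2 \equiv q/2$ constant. The main obstacle is this latter subcase, where rep 2 forces $g \equiv q/2$ on all of $\{1,\dots,N\}$, and in particular at the four odd points $n = 1,3,5,7$. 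I plan to contradict this using rep 1: writing $a_1 = 2^{s_0-1}a_1'$ and $b_1 = 2^{s_0-1}b_1'$ with $a_1', b_1'$ odd, the relation \eqref{mult-1} gives $g(2j+1) = f_q(m_1 + j a_1')$ for $j=0,1,2,3$ and an explicit $m_1 \in \Z$. Since $a_1'$ is a unit modulo $q = 2^{s_0}$, the four residues $m_1 + j a_1' \Mod{q}$ are distinct; so at most one of them lies in $q\Z$ (accounting for one potential recursive match via \eqref{mult-1}) and at most one directly reduces to $q/2$. By \eqref{init} this forces at most two of the four $f_q$-values to equal $q/2$, contradicting the requirement that all four do, and closing the last case.
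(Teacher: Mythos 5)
Your overall strategy is the same as the paper's: reduce everything to uniqueness of the associated affine function, observe that two points of different parity outside $\Gamma_1\cup\Gamma_2$ force $\alpha_1=\alpha_2$ via invertibility of odd numbers in $\Z/q\Z$, and then kill the exceptional coverings of a parity class by parity arguments on $f_q$ along the bad coset. Your treatments of case (B) and of the ``$\alpha_2\equiv q/2$ constant'' subcase of (A) are correct and in fact more self-contained than the paper's, which simply re-runs the case analysis from the proof of Proposition \ref{rigid-out}.

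There is, however, a gap in case (A). You assert ``the identity $\alpha_1=\alpha_2$ on the even integers of $\{1,\dots,N\}$,'' but what the two representations actually give is $\alpha_1=\alpha_2$ on $\{1,\dots,N\}\setminus(\Gamma_1\cup\Gamma_2)$; with $\Gamma_1=2\Z+1$ this is the set of even integers \emph{outside $\Gamma_2$}. Your case (A) places no restriction on $\Gamma_2$, so it includes the configurations $\Gamma_2=2\Z$ (in which case there are no even integers outside $\Gamma_2$ at all, and $V=\emptyset$), $\Gamma_2=4\Z+c$ with $c$ even (where $\alpha_2$ is only pinned down at evens spaced $4$ apart, so $4s_2=0$ leaves $s_2\in\{q/4,3q/4\}$ as additional possibilities beyond your two), and $\Gamma_2=2^j\Z+c$ with $j\ge 3$ and $c$ even. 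None of these survives --- for $j\ge 3$ one gets $2s_2=0$ contradicting $s_2$ being an odd multiple of $2^{s_0-j}$; for $\Gamma_2=2\Z$ one gets $g\equiv q/2$ and your rep-1 parity argument applies; for $\Gamma_2=4\Z+c$ one finds $g$ takes values in $\{q/4,3q/4\}$ on the odd integers while rep 1 forces an odd value of $f_q$ among $g(1),g(3)$ --- but as written your proof does not enumerate or dispatch them. The fix uses only the tools you already deploy, so this is an incomplete case analysis rather than a wrong approach, but the step ``$\alpha_2$ is pinned down to exactly two possibilities'' does fail for those unexamined positions of $\Gamma_2$.
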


\begin{proof}  Suppose $g \in \Scal[N]$ has two representations $g = f_q(a_1 n + b_1) = f_q(a_2 n + b_2)$ with associated orders $\ord_1, \ord_2$, steps $s_1, s_2$, affine functions $\alpha_1, \alpha_2$, and bad cosets $\Gamma_1, \Gamma_2$.   Our task is to show that $\ord_1=\ord_2$, $s_1=s_2$, $\alpha_1=\alpha_2$, and $\Gamma_1=\Gamma_2$.

First suppose that we can find elements $n,m \in \{1,\dots,N\}$ of different parity that lie outside $\Gamma_1 \cup \Gamma_2$.  The arguments in the proof of Proposition \ref{rigid-out} (with $\alpha_1, \alpha_2$ playing the roles of $\alpha, \alpha_g$ respectively, and similarly for $\Gamma_1,\Gamma_2$ and $\Gamma, \Gamma_g$)  imply that $\alpha_1=\alpha_2$, which implies that the steps $s_1=s_2$ agree, and that the bad cosets $\Gamma_1=\Gamma_2$ (which are the zero sets of $\alpha_1=\alpha_2$) agree.  Since the upper density of $\Gamma_i$ is $2^{\ord_i}/q$, we then conclude that $\ord_1=\ord_2$, as claimed.

On repeating the rest of the analysis in the proof of Proposition \ref{rigid-out}, we see that the only other case that does not lead to a contradiction is if $\Gamma_1$ is a coset $2\Z+c$ of $2\Z$ and $\alpha_2 = \frac{q}{2}\Mod{q}$ on the complementary coset $2\Z+c+1$.  Thus $\alpha_2$ is either equal to $\alpha_1$, or the constant $\frac{q}{2}$.  In the former case we are done as before.  In the latter case, $\Gamma_2$ is empty, and now we can obtain a contradiction by interchanging the roles of $\Gamma_1$ and $\Gamma_2$ and appealing again to the analysis in the proof of Proposition \ref{rigid-out}.
\end{proof}

We remark that the statistics $s_g, \ord_g, \Gamma_g, \alpha_g$ of an element $g$ of $\Scal[N]$ do
not uniquely determine $g$, because there is still some variability of $g$ within the bad coset $\Gamma_g$.  For instance, the elements $n \mapsto f_q(n)$ and $n \mapsto f_q(n+q)$ of $\Scal[N]$ are both of step $1$ and order $0$ with bad coset $q\Z$ and associated affine function $n \mapsto n \Mod{q}$, but disagree inside of the bad coset.  Nevertheless, the statistics $s_g, \ord_g, \Gamma_g, \alpha_g$ still give some partial constraints on the behavior of $g$ on the bad coset $\Gamma_g$; for instance, all elements $g \in \Scal[N]$ with step $1$, order $0$, bad coset $q\Z$ and associated affine function $n \mapsto n \Mod{q}$ must take the form $g(n) = \frac{n}{q} - c \Mod{q}$ for all $n$ in the bad coset $\{1,\dots,N\} \cap q\Z$ except at a single point $n=cq$ for some $c=1,\dots,q$.  It is this partial control inside the bad coset which will ultimately allow us to conclude the aperiodicity required in Theorem \ref{main-sudoku}.

\section{The structure of Sudoku solutions}\label{conclusion}

\begin{figure}
\centering
    \includegraphics[width = .9\textwidth]{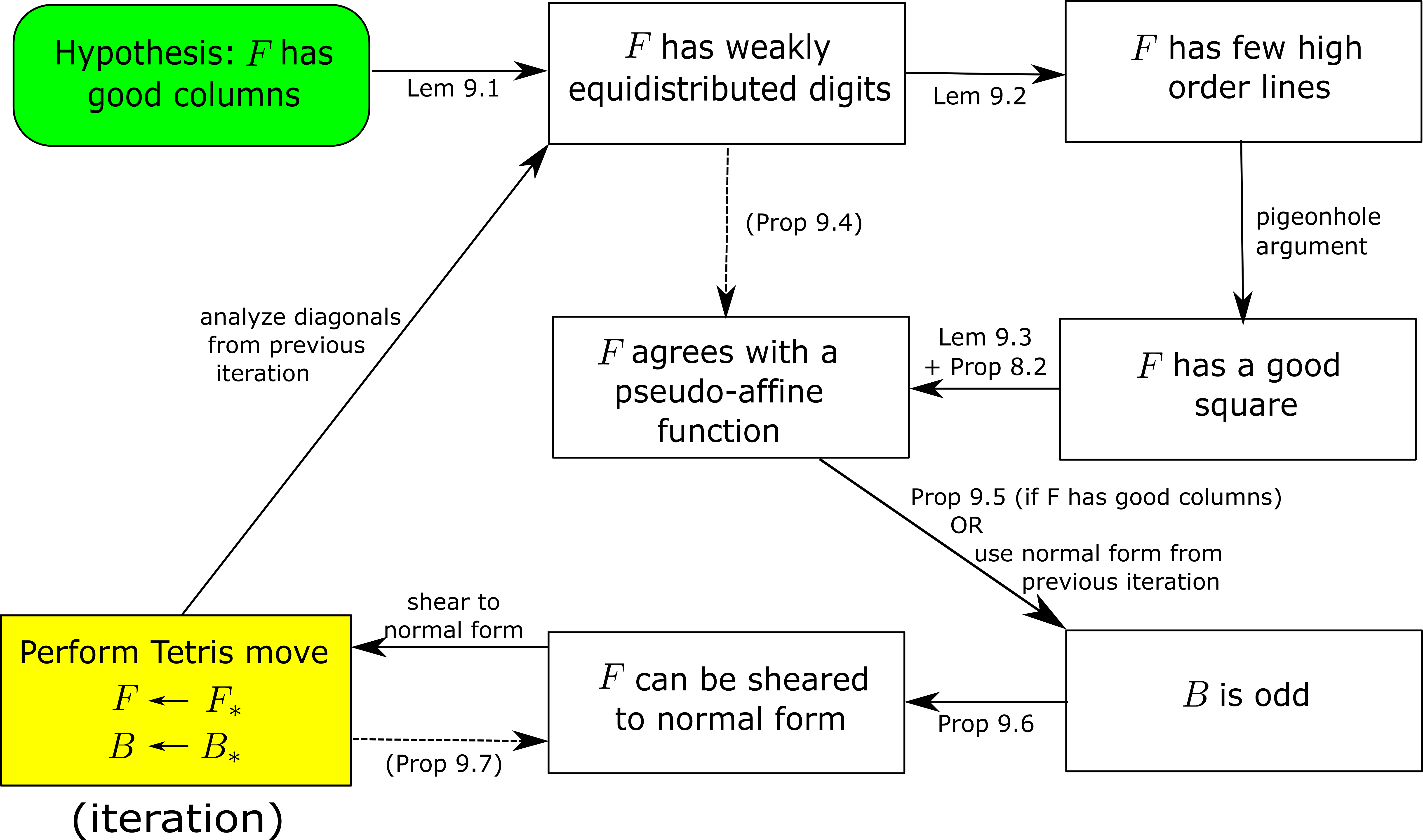}
    \caption{A schematic description of how enough structure is obtained on a Sudoku solution $F$ with good columns that one can eventually conclude that solutions are aperiodic and prove Theorem \ref{main-sudoku}.  Dashed arrows indicate implications that are essentially compositions of other arrows in the diagram.  At a key step in the argument (depicted by the yellow box) the analysis shifts from a Sudoku solution $F$ to its ``post-Tetris move'' version $F_*$ (after first shearing to normal form).  Among other things, this move reduces any putative period in the solution by a factor of $q$.}
  \label{fig:structure}
\end{figure} 

We are now ready to prove Theorem \ref{main-sudoku}.  The logical structure of the argument is summarized in Figure \ref{fig:structure}.

To use the property of having good columns, we begin with the following lemma.  By definition, if the digits of a Sudoku solution $F \colon \mathbb{B} \to \Sigma$ were perfectly equidistributed, then each digit would occur on a set of density $\frac{1}{q-1}$ (as defined in Section \ref{notation-sec}).  It will be convenient to work with a weaker variant of this property.  We say that $F$ has \emph{weak digit equidistribution} if each digit $\sigma \in \Sigma$ occurs in the solution with upper density at most $\frac{2}{q}$ in $\mathbb{B}$.

\begin{lemma}[Good columns implies weak digit equidistribution]\label{digitdensity} Every Sudoku solution with good columns has weak digit equidistribution.
\end{lemma}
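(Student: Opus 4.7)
The plan is to exploit the good columns property on each column separately and then sum over columns. Fix a digit $\sigma \in \Sigma$. On each column $\{n\} \times \Z$ for $n = 1,\dots,N$, the good columns hypothesis gives a permutation $\sigma_n \colon \Z/q\Z \to \Z/q\Z$ such that $F(n,m) = \sigma_n(m \Mod q)$ whenever $\sigma_n(m \Mod q) \neq 0$. Hence the cells $(n,m)$ on this column where $F(n,m) = \sigma$ are contained in the union of two residue classes modulo $q$: the class $\{m : m \equiv \sigma_n^{-1}(\sigma) \Mod q\}$ (coming from the permutation itself), together with the class $\{m : m \equiv \sigma_n^{-1}(0) \Mod q\}$ (where $F$ is allowed to take an arbitrary value in $\Sigma$).

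Now I would turn this into an upper density bound. For any $M \geq N$, we have $\mathbb{B} \cap \{-M,\dots,M\}^2 = \{1,\dots,N\} \times \{-M,\dots,M\}$, of cardinality $N(2M+1)$. On each column $n$, the number of $m \in \{-M,\dots,M\}$ lying in a single residue class modulo $q$ is at most $\lceil (2M+1)/q \rceil$, so the number of cells $(n,m)$ with $n$ fixed and $F(n,m) = \sigma$ is at most $2\lceil(2M+1)/q\rceil$. Summing over $n = 1,\dots,N$ yields
$$ |\{(n,m) \in \mathbb{B} \cap \{-M,\dots,M\}^2 : F(n,m) = \sigma\}| \leq 2N\lceil(2M+1)/q\rceil. $$
Dividing by $N(2M+1)$ and letting $M \to \infty$ gives the upper density bound $2/q$, which is exactly the weak digit equidistribution property.

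The argument is entirely elementary; there is no real obstacle, since the good columns property essentially forces the digit $\sigma$ to appear on each column on at most two residue classes mod $q$ (one forced by the permutation, one permitted by the zero class). The only minor point worth noting is that the ceiling $\lceil(2M+1)/q\rceil$ introduces an error term $O(1)$ per column, which contributes $O(N)$ to the numerator and thus vanishes in the limit $M \to \infty$ against the denominator of order $NM$.
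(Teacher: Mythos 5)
Your proof is correct and follows essentially the same route as the paper: reduce to a single column (the paper phrases this as an application of the triangle inequality, you phrase it as summing over the $N$ columns), then observe that the good columns property confines the digit $\sigma$ on column $n$ to the two cosets $\sigma_n^{-1}(\{\sigma\})$ and $\sigma_n^{-1}(\{0\})$ of $q\Z$, which immediately yields the upper density bound $2/q$.
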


\begin{proof}  Let $F \colon \mathbb{B} \to \Sigma$ be a Sudoku solution with good columns. By the triangle inequality, it suffices to verify the claim for each separate column $\{n\} \times \Z$, that is to say to show that
$$ \limsup_{M \to \infty} \frac{1}{2M+1} | \{ m \in \{-M,\dots,M\}: F(n,m) = \gamma \}| \leq \frac{2}{q}$$
for each $n=1,\dots,N$.  By the good column property, there is a permutation $\sigma_n \colon \Z/q\Z \to \Z/q\Z$ such that $F(n,m) = \sigma_n(m \Mod{q})$ whenever $\sigma_n(m \Mod{q}) \neq 0$.  Thus the property $F(n,m)=\gamma$ can only occur in two cosets of $\Z/q\Z$, the coset $\sigma_n^{-1}(\{\gamma\})$ and the coset $\sigma_n^{-1}(\{0\})$, and the claim follows.
\end{proof}

For each line $\ell_{i,j} = \{ (n,jn+i): 1 \le n \leq N \}$ in the Sudoku board, we have the associated element $F_{i,j}$ of $\Scal[N]$ defined by
$$ F_{i,j}(n) \coloneqq F(n,jn+i).$$
In particular we have an associated order $\ord_{F_{i,j}} \in \{-\infty, 0, 1, \dots, s_0-1\}$ of a line to be the order of the associated sequence $n \mapsto F(n,jn+i)$.  We have the following bound on the density of lines of high order:

\begin{lemma}[Weak digit equidistribution implies high-order lines are rare]\label{rare}  Suppose that $F \colon \mathbb{B} \to \Sigma$ is a Sudoku solution with weak digit equidistribution.  Then, for non-negative order $0 \leq o \leq s_0-1$, and any slope $j$, the set $\{ i \in \Z: \ord_{F_{i,j}} = o \}$ has upper density at most $2^{-o+1}$ in $\Z$.
\end{lemma}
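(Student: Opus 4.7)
The plan is to run a double-counting argument in the large box $B_M \coloneqq \{1,\dots,N\} \times \{-M,\dots,M\}$, comparing a lower bound on the number of cells whose value lies in $\Sigma_o \coloneqq (2^o\Z/q\Z)\setminus\{0\}$ that comes from the lines of order $o$, with an upper bound that comes from weak digit equidistribution.

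First I would extract the relevant consequence of \lemref{stats}: fixing $j$ and $o$, if $i \in I \coloneqq \{i \in \Z : \ord_{F_{i,j}} = o\}$, then the step $s_{F_{i,j}}$ is an odd multiple of $2^o$ and the bad coset $\Gamma_{F_{i,j}}$ is a non-empty coset of $(q/2^o)\Z$. Because the affine function $\alpha_{F_{i,j}}(n) = s_{F_{i,j}} n + c_{F_{i,j}}$ has slope a multiple of $2^o$ \emph{and} vanishes at some integer, its constant term $c_{F_{i,j}}$ must also be a multiple of $2^o$; hence $\alpha_{F_{i,j}}$ takes values in $\Sigma_o$ off the bad coset, and by (iii) so does $F_{i,j}$. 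Since $N = q^2$ is divisible by $q/2^o$, the intersection $\Gamma_{F_{i,j}} \cap \{1,\dots,N\}$ has exactly $2^o q$ elements, so each $i \in I$ whose line $\ell_{i,j}$ is fully contained in $B_M$ contributes \emph{exactly} $N - 2^o q = q(q-2^o)$ cells to the set
\[
\{(n,m) \in B_M : F(n,m) \in \Sigma_o\},
\]
whose cardinality I will denote $C_M$.

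The lower bound is then $|I \cap J_M|\, q(q-2^o) \leq C_M$, where $J_M \coloneqq \{i \in \Z : \ell_{i,j} \subset B_M\}$ is an interval of length $2M+1 - |j|(N-1)$ that differs from $[-M,M]$ by $O_{j,N}(1)$ elements. For the upper bound, \lemref{digitdensity} gives each digit in $\Sigma_o$ upper density at most $2/q$ in $\mathbb{B}$, so
\[
\limsup_{M\to\infty} \frac{C_M}{N(2M+1)} \leq |\Sigma_o|\cdot\frac{2}{q} = \frac{2(q-2^o)}{2^o q}.
\]

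Combining the two bounds and using that $\limsup |I\cap J_M|/(2M+1)$ equals the upper density $\delta$ of $I$ in $\Z$ (since the symmetric difference of $J_M$ and $[-M,M]$ has size bounded independently of $M$), I obtain
\[
\delta \leq \frac{N}{q(q-2^o)} \cdot \frac{2(q-2^o)}{2^o q} = \frac{2N}{2^o q^2} = \frac{2}{2^o} = 2^{-o+1},
\]
which is the desired bound. I do not foresee a substantial obstacle; the only delicate point is the divisibility $q/2^o \mid N$, which makes the cell count on each order-$o$ line \emph{exactly} $q(q-2^o)$ so that the final arithmetic closes cleanly to $2^{-o+1}$ rather than a slightly weaker bound.
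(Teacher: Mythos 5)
Your proof is correct and follows essentially the same double-counting strategy as the paper: a per-line lower bound on the number of cells carrying certain over-represented digits, played off against the global density upper bound coming from weak digit equidistribution. The only difference is the choice of target digits --- you count occurrences of the whole set $\Sigma_o = (2^o\Z/q\Z)\setminus\{0\}$, whereas the paper counts only the single digit $\frac{q}{2} \Mod{q}$, which every order-$o$ line must attain at least $2^o q$ times; both choices close the arithmetic to the same bound $2^{-o+1}$.
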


\begin{proof}  If $i$ is such that $\ord_{F_{i,j}} = o$, then there is an affine function $n \mapsto 2^o(an+b)$ with $a,b \in \Z/q\Z$ and $a$ odd, such that $F_{i,j}(n) = 2^o(an+b)$ whenever $2^o(an+b) \neq 0$.  In particular, $F(n,jn+i)$ attains the value $q/2 \Mod q$ at least $2^o N/q = 2^o q$.  On the other hand, by the weak digit equidistribution assumption and the triangle inequality the set of $(n,i) \in \{1,\dots,N\} \times \Z$ for which $F_{i,j}(n) = q/2 \Mod q$ has upper density at most $2/q$ in $\{1,\dots,N\} \times \Z$. The claim then follows from a standard double counting argument.
\end{proof}

Once one knows that high-order lines are rare, the function $F$ becomes mostly affine along horizontal lines, diagonals, and anti-diagonals.  One can then expect to ``concatenate'' this information together (in the spirit of \cite{tao-ziegler}) to conclude that $F$ is in fact mostly a two-dimensional affine function $F(n,m) = An+Bm+C$.  This is almost correct, but in our $2$-adic setting there is an additional technicality, in that a small amount of quadratic behavior is also permitted.  More precisely, define a \emph{pseudo-affine function} on $\Z^2$ to be a function $\Psi \colon \Z^2 \to\Z/q\Z$ that is of the form
\begin{equation}\label{psid}
\Psi(n,m) = An+Bm+C + D \frac{q}{4} m(m-n)
\end{equation}
for some coefficients $A,B,C,D \in \Z/q\Z$; see Figure \ref{fig:pseudo}.  
Observe that such functions are affine along infinite non-vertical lines $\overline{\ell}_{i,j} \coloneqq \{ (n,jn+i): n \in \Z\}$, since
$$ \Psi(n,jn+i) = An + Bjn + Bi + C + D \frac{q}{4} (2nij-in+i^2) + D \frac{q}{2} n \binom{j}{2}$$
thanks to the identity $$\frac{q}{2} n^2 -\frac{q}{2} n =q \binom{n}{2}= 0\Mod{q}.$$  The quadratic term $D \frac{q}{4} m(m-n)$ in the definition of a pseudo-affine function is unfortunately necessary, but plays only a minor technical role in the analysis (for $q$ large enough) and we recommend that the reader ignore these terms at a first reading.  The most important coefficient of a pseudo-affine function $\Psi$ will be the vertical coefficient $B$; in particular, the behavior is particularly tractable when $B$ is odd.

\begin{figure}
\centering
    \includegraphics[width = .8\textwidth]{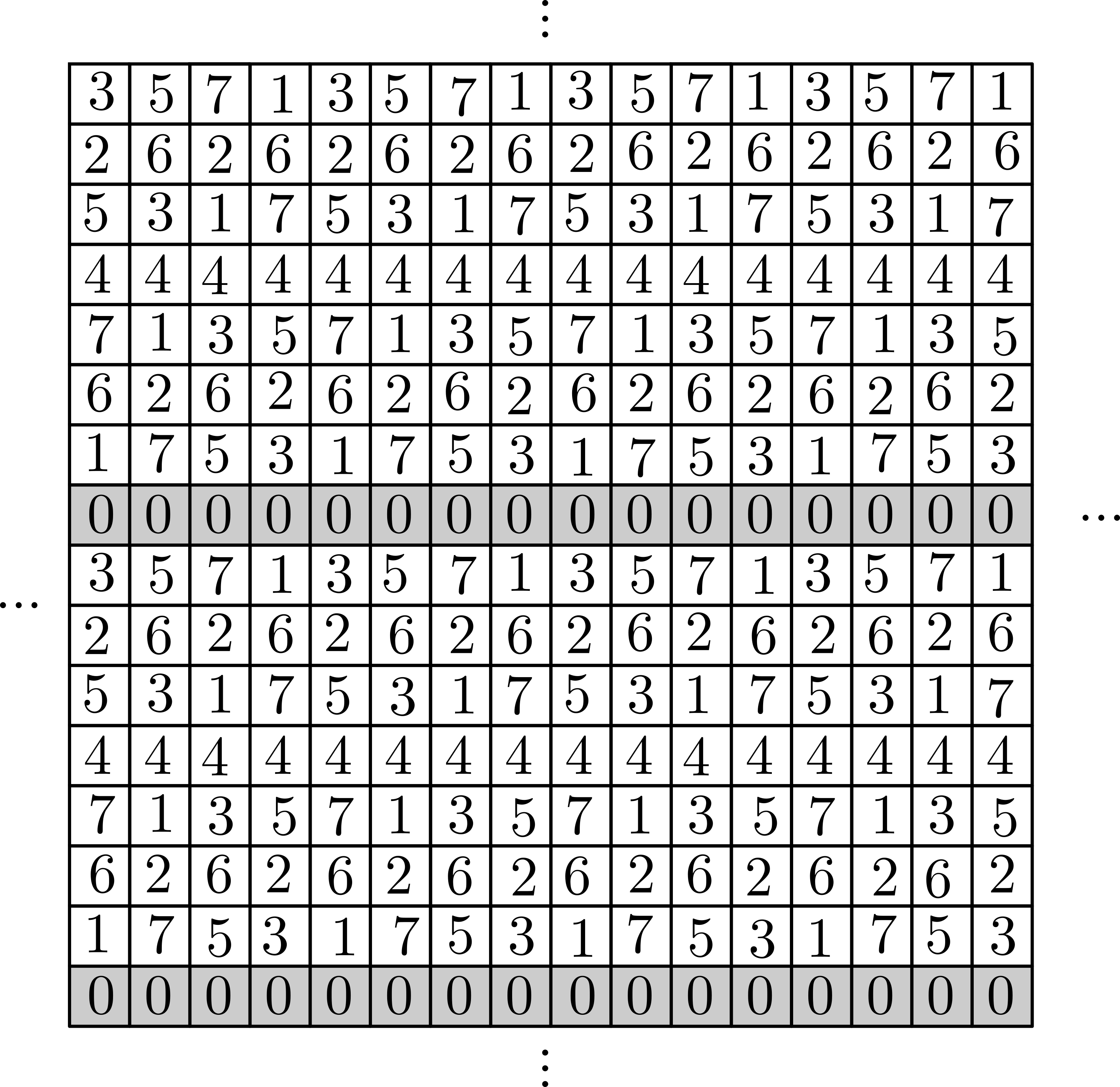}
    \caption{The pseudo-affine function $\Psi(n,m) = m + \frac{q}{4} m(m-n) \Mod{q}$ with $q=8$.  Observe that while $\Psi$ is not affine in a two-dimensional sense, it is affine along all non-vertical lines.  Also, the zero set of $\Psi$ (shaded in grey) remains well-behaved, being equal to $\Z \times q\Z$.  A Sudoku solution that agreed with this pseudo-affine function outside of the grey cells would be in normal form in the sense of Proposition \ref{norm} below, and suitable for applying a ``Tetris'' move for further analysis.}
  \label{fig:pseudo}
\end{figure} 

It is clear that the class of pseudo-affine functions forms an additive group. Note that this group is closed under translations in $\Z^2$, if $\Psi(x)$, $x\in \Z^2$ is a pseudo-affine function and $t\in \Z^2$, then $\Psi(x+t)$, $x\in \Z^2$ is a pseudo-affine function.  We have the following\footnote{See the recent preprint \cite{khare} for some further variations on the theme of this lemma.} concatenation result:  

\begin{lemma}[Concatenation lemma]\label{concat}  Let $F \colon Q \to \Z/q\Z$ be a function defined on an $8 \times 8$ square $Q$ such that for any (infinite) line $\overline{\ell}_{i,j} = \{ (n,jn+i): n \in \Z\}$ with $j = -1,0,1$ (i.e., an infinite anti-diagonal, horizontal line, or diagonal) intersecting $Q$, the function $n \mapsto F(n,jn+i)$ is affine on $\{ n: (n, jn+i) \in Q\}$.  Then there exists a pseudo-affine function $\Psi \colon \Z^2 \to \Z/q\Z$ which agrees with $F$ on $Q$.
\end{lemma}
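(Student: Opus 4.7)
After an obvious translation I may assume $Q = \{0,\ldots,7\}^2$. Row affineness then lets me write
\[ F(n,m) = p(m)\,n + r(m) \qquad ((n,m) \in Q) \]
for unique functions $p, r \colon \{0,\ldots,7\} \to \Z/q\Z$, and the plan is to pin down $p$ and $r$ from the other two hypotheses and match the answer against the $4$-parameter pseudo-affine ansatz \eqref{psid}.

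I would first feed this form into the diagonal second-difference identity $F(n+2,m+2)-2F(n+1,m+1)+F(n,m)=0$, valid on $\{0,\ldots,5\}^2$, and separate the coefficient of $n$ from the constant term to obtain $\partial_m^2 p \equiv 0$ and $\partial_m^2 r \equiv -2\bigl(p(m+1)-p(m)\bigr)$ on $\{0,\ldots,5\}$. This forces $p$ to be affine on all of $\{0,\ldots,7\}$, say $p(m) = \alpha + \beta m$, and hence $r(m) = \gamma + \delta m - \beta m(m-1)$ on $\{0,\ldots,7\}$ for suitable constants $\gamma,\delta$. Next I would apply the anti-diagonal condition $F(n+2,m-2)-2F(n+1,m-1)+F(n,m)=0$ on $n \in \{0,\ldots,5\}$, $m \in \{2,\ldots,7\}$; its $n$-coefficient vanishes automatically by the affineness of $p$, and its constant part collapses, via the formula for $\partial_m^2 r$ just derived, to $-4\beta \equiv 0 \Mod{q}$. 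So $\beta$ must be of the form $D\tfrac{q}{4}$ for some $D \in \Z/q\Z$.

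Substituting this back and regrouping $D\tfrac{q}{4}\bigl(mn - m(m-1)\bigr) = -D\tfrac{q}{4}\,m(m-n) + D\tfrac{q}{4}\,m$ would finally exhibit $F$ on $Q$ in the pseudo-affine form
\[ \Psi(n,m) = \alpha n + \bigl(\delta + D\tfrac{q}{4}\bigr)m + \gamma - D\tfrac{q}{4}\,m(m-n), \]
which I then extend to all of $\Z^2$ using the same formula. The main subtlety---and the only place where arithmetic in $\Z/q\Z$ enters beyond generic linear algebra---is the extraction of $4\beta \equiv 0$ from the combination of the diagonal and anti-diagonal constraints; this is precisely why the quadratic correction $D\tfrac{q}{4}\,m(m-n)$ has to be admitted in the definition of pseudo-affinity (in odd characteristic one would simply get $\beta = 0$ and the honest affine class would suffice). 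The $8\times 8$ size of $Q$ enters only to guarantee enough diagonal second-difference constraints to pin down the quadratic formula for $r$ on the full interval $\{0,\ldots,7\}$.
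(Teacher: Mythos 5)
Your proof is correct, and it takes a genuinely different route from the paper's. The paper normalizes $Q=\{0,\dots,7\}^2$ and successively \emph{subtracts off} pieces of the answer: first the affine function $An+Bm+C$ fitted to the bottom row and the main diagonal, then the quadratic correction $D\tfrac{q}{4}m(m-n)$ extracted from the row $m=1$ (where affineness plus vanishing at two points at distance $4$ forces the slope into $\tfrac{q}{2}\Z/q\Z$), and finally propagates the vanishing of the difference across the whole square by alternating between anti-diagonals and rows. You instead parametrize $F$ row by row as $F(n,m)=p(m)n+r(m)$ and solve the second-difference equations imposed by the diagonal and anti-diagonal constraints head-on: the diagonal constraint forces $p$ affine and pins $r$ down to an explicit quadratic, and the anti-diagonal constraint then collapses to $4\beta\equiv 0$, which is precisely the origin of the $\tfrac{q}{4}$ in the pseudo-affine ansatz. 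Your version is more systematic and makes the provenance of the quadratic term transparent; the paper's version avoids expanding the anti-diagonal second difference in full and reads as a shorter ``normalize and propagate zeros'' argument. Two small remarks: the constant term of the diagonal second difference actually gives $\partial_m^2 r(m) = -2\bigl(p(m+2)-p(m+1)\bigr)$ rather than $-2\bigl(p(m+1)-p(m)\bigr)$, but since $p$ is affine both equal $-2\beta$ and nothing changes; and the separation of the $n$-coefficient from the constant term is legitimate because for each fixed $m$ the identity holds for at least two values of $n$ in the stated ranges.
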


\begin{proof}  By translation invariance we may normalize $Q = \{0,\dots,7\} \times \{0,\dots,7\}$.  The functions $n \mapsto F(n,0)$ and $n \mapsto F(n,n)$ are affine on $\{0,\dots,7\}$, thus there exist coefficients $A,B,C \in \Z/q\Z$ such that $F(n,m) = An+Bm+C$ for 
\begin{equation}\label{lam}
 (n,m) \in \{(n,0): 0 \leq n \leq 7 \} \cup \{(n,n):0 \leq n \leq 7 \}\}.
 \end{equation}
By subtracting the pseudo-affine function $An+Bm+C$ from $F(n,m)$ we may normalize $A=B=C=0$, thus $F$ now vanishes on the set \eqref{lam}.

The function $n \mapsto F(n,6-n)$ is affine on $\{0,\dots,6\}$ and vanishes at $n=3,6$, hence vanishes on all of $\{0,\dots,6\}$.  In particular $F$ now vanishes at both $(1,1)$ and $(5,1)$.  Since $n \mapsto F(n,1)$ is affine on $\{0,\dots,7\}$, we conclude that $F(n,1) = D \frac{q}{2} (1-n)$ for some $D \in \Z/q\Z$.  By subtracting the pseudo-affine function $D \frac{q}{4} m(m-n)$ from $F$ (which vanishes on \eqref{lam}) we may normalize $D=0$.  Thus $F$ now vanishes on $\{(n,1): 0 \leq n \leq 7 \}$.

For $i=1,\dots,7$, the function $n \mapsto F(n,i-n)$ is affine on $\{0,\dots,i\}$ and vanishes at $n=i-1,i$, hence vanishes on all of $\{0,\dots,i\}$.  In particular, $F(0,m) = F(1,m) = 0$ for all $m=0,\dots,6$.  As $n \mapsto F(n,m)$ is affine on $\{0,\dots,7\}$, we conclude that $F$ now vanishes on $\{0,\dots,7\} \times \{0,\dots,6\}$.  By inspecting $F$ on diagonal and anti-diagonal lines that meet the top row $\{0,\dots,7\} \times \{7\}$ of the square, one can then check that $F$ vanishes here also.  Thus $F$ is identically zero on $Q$, and the claim follows.
\end{proof}

We utilize this lemma as follows.

\begin{proposition}[Weak digit equidistribution implies pseudo-affine structure]\label{near-digit} Suppose that $F \colon \mathbb{B} \to \Sigma$ is a Sudoku solution with weak digit equidistribution.  Suppose that $q$ is sufficiently large.  Then there exists a pseudo-affine function $\Psi \colon \Z^2 \to \Z/q\Z$, which does not vanish on at least one square $\{n_0,\dots,n_0+7\} \times \{m_0,\dots,m_0+7\}$, such that $F(n,m) = \Psi(n,m)$ whenever $(n,m) \in \mathbb{B}$ is a cell with $\Psi(n,m) \neq 0$.
\end{proposition}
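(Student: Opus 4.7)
The plan is threefold. First, use Lemma~\ref{rare} together with an averaging argument to locate an $8 \times 8$ square $Q = \{n_0,\dots,n_0+7\} \times \{m_0,\dots,m_0+7\}$ on which every row, diagonal, and antidiagonal of $\mathbb{B}$ meeting $Q$ has low order and bad coset disjoint from $\{n_0,\dots,n_0+7\}$. Second, apply Lemma~\ref{concat} to produce a pseudo-affine function $\Psi \colon \Z^2 \to \Z/q\Z$ with $F|_Q = \Psi|_Q$ (necessarily non-vanishing on $Q$, since $F$ is $\Sigma$-valued). Third, use Proposition~\ref{rigid-out} repeatedly to extend the identity $F = \Psi$ (on cells where $\Psi \ne 0$) from $Q$ to all of $\mathbb{B}$.

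For the first step, combining Lemma~\ref{rare} across the three slopes $j \in \{-1,0,1\}$, the set of intercepts yielding a line of order exceeding a threshold $o_0$ has upper density at most $2^{-o_0+2}$ in each slope. For a surviving line of order $\le o_0$, the bad coset is a coset of $2^{-\mathrm{ord}}q\Z$ with spacing $\ge 2^{s_0-o_0}$; for $q$ sufficiently large, such a coset misses a ``generic'' length-$8$ interval with probability close to $1$. Averaging over $(n_0,m_0)$, the expected number of bad lines among the $8$ rows, $15$ diagonals, and $15$ antidiagonals meeting $Q_{n_0,m_0}$ is less than one, so there exists a $(n_0,m_0)$ for which all $38$ such lines are simultaneously good. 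On this $Q$, $F$ is then exactly affine on each such line's intersection with $Q$, and Lemma~\ref{concat} produces $\Psi$.

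For the third step, on each of the $38$ lines $\ell$ through $Q$, $F|_\ell \in \Scal[N]$ agrees with the affine function $\Psi|_\ell$ on the $8$ consecutive cells of $\ell \cap Q$ (where $\Psi|_\ell \ne 0$), so Proposition~\ref{rigid-out} yields $F(n,m) = \Psi(n,m)$ for all $(n,m) \in \ell$ with $\Psi(n,m) \ne 0$. To handle an arbitrary non-vertical line $\ell_{i,j}$ of $\mathbb{B}$, one uses its intersections with these already-extended lines: each row $m'$ in an $O(N)$-sized range around $m_0$ meets the $15$ extended diagonals through $Q$ in $15$ consecutive $n$-values within $\{1,\dots,N\}$, so another application of Proposition~\ref{rigid-out} gives $F = \Psi$ on the full row $m'$, and iterating covers every non-vertical line $\ell_{i,j}$. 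The main obstacle I anticipate is ensuring this iterative extension reaches every row of $\mathbb{B}$, particularly those whose $m$-coordinate lies far from $m_0$ or which are intersected only by lines of very large slope; this is handled by applying the same averaging argument at additional heights to produce further good squares $Q'$, and matching the resulting pseudo-affine functions against $\Psi$ via the overlap of their extended line structures (using that any pseudo-affine function is automatically $q$-periodic in both coordinates, so the parameters $A,B,C,D$ are determined by agreement on $Q$ alone).
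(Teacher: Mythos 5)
Your overall architecture coincides with the paper's: locate a good $8\times 8$ square by combining Lemma~\ref{rare} with an averaging/double-counting argument over squares, apply Lemma~\ref{concat} to get $\Psi$ on $Q$, and then propagate the identity $F=\Psi$ (off the zero set of $\Psi$) using Proposition~\ref{rigid-out}. Steps one and two are fine. The gap is in your propagation step: you assert that each of the $38$ rows, diagonals and antidiagonals meeting $Q$ intersects $Q$ in ``the $8$ consecutive cells of $\ell\cap Q$.'' That is only true for the $8$ rows and for the two main (anti)diagonals of $Q$; the other $26$ lines meet the $8\times 8$ square in at most $7$ cells, so Proposition~\ref{rigid-out} (which genuinely needs a length-$8$ interval of agreement) cannot be applied to them at this stage. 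Consequently your next claim --- that a nearby row meets $15$ already-extended diagonals in $15$ consecutive cells --- is unsupported, since only one diagonal has actually been extended.

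The repair is to order the propagation as the paper does: first extend the $8$ rows (each of which does meet $Q$ in $8$ consecutive cells), obtaining goodness on the full-width band $\{1,\dots,N\}\times\{m_0,\dots,m_0+7\}$; then the diagonals that cross this band in $8$ consecutive cells within $\{1,\dots,N\}$ (there are about $N-8$ of them) become fully good; these hit row $m_0+8$ in far more than $8$ consecutive cells, so that row becomes good; reflect for $m_0-1$ and iterate row by row to exhaust $\mathbb{B}$. With this ordering your fallback device of producing additional good squares $Q'$ at distant heights and ``matching'' their pseudo-affine functions to $\Psi$ is unnecessary (and, as written, under-justified: you would still need to show the two extended structures overlap on enough cells where neither function vanishes to force equality of parameters).
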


\begin{proof}
    Let $M > 100N$ be a sufficiently large parameter (that can depend on $q$) to be chosen later. The first step is to locate a square $Q = \{n_0,\dots,n_0+7\} \times \{m_0,\dots,m_0+7\}$ in $\{1,\dots,N\} \times \{1,\dots,M\}$ with good properties.  The number of possible such squares $Q$ is $(N-7)(M-7)$; we select one at random.
    
To each non-vertical line $\ell_{i,j} = \{ (n,jn+i): 1 \leq n \leq N\}$, one can form the bad set $\Gamma_{i,j} \coloneqq \{ (n,jn+i): n \in \Gamma_{F_{i,j}} \cap \{1,\dots,N\} \}$ associated to the bad coset $\Gamma_{F_{i,j}}$ of $F_{i,j}$.  If the $\ord_{F_{i,j}} = o$, this bad set has spacing $q/2^o$, and thus has cardinality $O(2^o N / q) = O(2^o q)$.  Thus, there are at most $O(2^o q)$ squares $Q$ with the property that $Q$ contains one of the elements of this bad set.  On the other hand, for $j=-1,0,1$ (i.e., horizontal lines, diagonals, and anti-diagonals), we see from Lemma \ref{rare} that the set of intercepts $i$ with $\ord_{F_{i,j}} = o$ have upper density $O(2^{-o})$.  Summing in $o$ and over the $O(M)$ possible lines $\ell_{i,j}$ of slope $j=-1,0,1$ intersecting $\{1,\dots,N\} \times \{1,\dots,M\}$, we conclude from double counting (for $M$ large enough) that the probability that $Q$ contains a bad point from a horizontal line, diagonal, or anti-diagonal intersecting $Q$ is $O(\frac{\sum_{o=0}^{s_0-1} 2^{-o} 2^o q \times M}{(N-7)(M-7)}) = O( \log q / q )$.  Thus, assuming $q$ is large enough, we can find a square 
$$ Q = \{n_0,\dots,n_0+7\} \times \{m_0,\dots,m_0+7\}$$
in $\{1,\dots,N\} \times \{1,\dots,M\}$ with the property that all horizontal lines, diagonals, and anti-diagonals $\ell_{i,j}$ passing through $Q$ are such that $Q \cap \Gamma_{i,j} = \emptyset$.  In particular, on every such line $\ell$, $F$ agrees on $Q \cap \ell$ with a (one-dimensional) affine function.  Applying Lemma \ref{concat}, we may find a pseudo-affine function $\Psi \colon \Z^2 \to \Z/q\Z$ such that $F$ agrees with $\Psi$ on $Q$. In particular, $\Psi$ is non-vanishing on $Q$.

Call a cell $(n,m)$ \emph{good} if either $\Psi(n,m)=0$, or if $\Psi(n,m)=F(n,m)$.  Then all elements of $Q$ are good.  Also, from Proposition \ref{rigid-out} we see that if a line $\ell_{i,j}$ contains eight good consecutive cells, then all the cells in the line are good.  Applying this fact to the eight horizontal lines $\ell_{m,0} = \{ (n,m): 1 \leq n \leq N \}$ for $m \in \{m_0,\dots,m_0+7\}$, we conclude that all the cells in the rectagular region $\{1,\dots,N\} \times \{m_0,\dots,m_0+7\}$ are good.  Applying the fact again to the diagonal lines $\ell_{m,1} = \{ (n,n+m): 1 \leq n \leq M\}$ for $m_0-1 \leq m \leq m_0+8-N$, we conclude that all the cells in the partial horizontal line $\{8,\dots,N\} \times \{m_0+8\}$ are good; applying the fact again to the horizontal line $\ell_{m_0+8,0} = \{ (n,m_0+8): 1 \leq n \leq N \}$, we conclude that in fact all the cells in $\ell_{m_0+8,0}$ are good.  A reflected version of the same argument shows that all the cells in $\ell_{m_0-1,0}$ are good.  Thus we have extended the rectangle of good cells by one row in both directions.  Iterating this argument to fill out the remaining rows of the Sudoku board, we conclude that all the cells in $\mathbb{B}$ are good, giving the claim.
\end{proof}

Assuming good columns, we can obtain an important control on a key coefficient $B$ of the pseudo-affine function $\Psi$.

\begin{proposition}[Odd vertical coefficient]\label{is-odd}  Let $F$ be a Sudoku solution with good columns.  Let $\Psi(n,m) = An + Bm + C + D \frac{q}{4} m(m-n)$ be the pseudo-affine function produced by Proposition \ref{near-digit}.  Then $B$ is odd.
\end{proposition}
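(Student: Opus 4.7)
The plan is to argue by contradiction: assume $B$ is even and derive a conflict with the good columns hypothesis by exploiting an ``extra'' periodicity of $\Psi$ in the $m$-variable.

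First, I would establish that when $B$ is even and $q \geq 8$, the pseudo-affine function $\Psi$ satisfies
\[
\Psi(n, m + q/2) \equiv \Psi(n, m) \pmod{q} \qquad \text{for all } (n,m) \in \Z^2.
\]
This is a direct calculation: expanding the quadratic part gives
\[
\Psi(n, m+q/2) - \Psi(n, m) = B\tfrac{q}{2} + D\tfrac{q^2}{8}(2m-n) + D\tfrac{q^3}{16},
\]
and each of the three terms vanishes modulo $q$ (the first since $B$ is even, the other two since $q^2/8$ and $q^3/16$ are multiples of $q$ when $q \geq 8$).

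Next, fix any $n \in \{n_0, \dots, n_0+7\}$ with the permutation $\sigma_n$ provided by good columns, and partition $\Z/q\Z$ into the $q/2$ two-element orbits $\{r, r+q/2\}$ under translation by $q/2$. On each orbit, the previous step gives $\Psi(n, m) = \Psi(n, m+q/2)$. I claim that for every orbit $\{r, r+q/2\}$ not containing the single residue $\sigma_n^{-1}(0)$, we must have $\Psi(n, \cdot) \equiv 0$ on the orbit. Indeed, suppose $\Psi(n, m) \neq 0$ for some $m \equiv r \pmod q$; then also $\Psi(n, m+q/2) \neq 0$, so $F(n, m) = \Psi(n,m) = \Psi(n, m+q/2) = F(n, m+q/2)$. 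But good columns forces $F(n, m) = \sigma_n(r)$ and $F(n, m+q/2) = \sigma_n(r+q/2)$, which are distinct nonzero values since $\sigma_n$ is a permutation and $r, r+q/2$ lie outside $\sigma_n^{-1}(0)$---a contradiction.

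Consequently, $\Psi(n, m) = 0$ for at least $2(q/2 - 1) = q - 2$ residues of $m$ modulo $q$; equivalently, $\Psi(n, \cdot)$ is nonzero for at most two residues modulo $q$. But by Proposition~\ref{near-digit}, $\Psi$ does not vanish anywhere on the square $\{n_0, \dots, n_0+7\} \times \{m_0, \dots, m_0+7\}$, so $\Psi(n, m) \neq 0$ for the $8$ consecutive integers $m \in \{m_0, \dots, m_0+7\}$, which represent $8$ distinct residues modulo $q$ (as $q \geq 8$). This contradicts the bound $\leq 2$, completing the proof.

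The main obstacle is the periodicity calculation of the first step: the quadratic correction $D\frac{q}{4}m(m-n)$ in the definition of a pseudo-affine function is precisely the technicality that could have broken the ``period $q/2$ when $B$ is even'' phenomenon, and one has to verify that all the induced cross-terms are harmless mod $q$ for $q$ a sufficiently large power of $2$. Once this periodicity is in hand, the good-columns permutation hypothesis does all the remaining work through an elementary pigeonholing across the orbits.
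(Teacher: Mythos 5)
Your proposal is correct and follows essentially the same route as the paper: assume $B$ even, verify that the quadratic term $D\frac{q}{4}m(m-n)$ does not disturb the $\frac{q}{2}$-periodicity of $m \mapsto \Psi(n,m)$ for $q$ a large power of $2$, and then pigeonhole against the single zero of the permutation $\sigma_n$ supplied by the good-columns hypothesis. The only (immaterial) difference is bookkeeping: the paper works on a coset of $8\Z$ where $\Psi(n,\cdot)$ is affine and non-vanishing and concludes $\sigma_n$ would need a zero in every pair $\{m, m+\frac{q}{2}\}$, while you count that $\Psi(n,\cdot)$ could be nonzero on at most two residues mod $q$ yet is nonzero on eight consecutive ones in the good square.
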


\begin{proof}  By \lemref{digitdensity}, $F$ has weak digit equidistribution. Hence, from applying Proposition \ref{near-digit} we obtain that there exists $1 \leq n \leq N$ such that the function $m \mapsto \Psi(n,m)$ is not identically zero.  This function is affine on every coset of $4\Z$, and hence is non-vanishing on at least one coset $8\Z+c$ of $8\Z$.  
Suppose for contradiction that $B$ was even.  Then the function $m \mapsto \Psi(n,m)$ is $\langle \frac{q}{2} \rangle$-periodic on $8\Z+c$, thus $m \mapsto F(n,m)$ is also.  But as $F$ has good columns, we also have $F(n,m) = \sigma_n(m)$ whenever $\sigma_n(m \Mod{q}) \neq 0$, for some permutation $\sigma_n \colon \Z/q\Z \to \Z/q\Z$.  This implies that $\sigma_n$ has a zero in every coset $\{m \Mod{q}, m + \frac{q}{2} \Mod{q}\}$ of $\frac{q}{2}\Z/q\Z$ with $m \in 8\Z+c$, which is absurd.
\end{proof}

This gives us a normal form as follows.  Given a Sudoku solution $F$, define a \emph{shearing} of $F$ to be any map $F' \colon \mathbb{B} \to \Sigma$ of the form
$$ F'(n,m) \coloneqq B F(n, m+An+C)$$
for some integers $A,B,C$ with $B$ odd.  Note from Proposition \ref{inv} that $F'$ is also a Sudoku solution; furthermore, $F$ has good columns if and only if $F'$ does, and $F$ is periodic if and only if $F'$ is.  The property of one Sudoku solution being a shearing of another can also be easily verified to be an equivalence relation.  In view of Remark \ref{duality}, the shear-invariance of Sudoku solutions is closely related to the translation invariance of tiling equations $A \oplus F = G$.

\begin{proposition}[Normal form]\label{norm}  Let $F(n,m)$ be a Sudoku solution which agrees with a pseudo-affine function $\Psi(n,m) = An + Bm + C + D \frac{q}{4} m(m-n)$ with $B \in \Z/q\Z$ odd when $\Psi(n,m)$ is non-zero. Then there exists a shearing $F'$ of $F$ which is in \emph{normal form} in the sense that
\begin{equation}\label{normal}
F'(n,m) = m + D \frac{q}{4} m(m-n)
\end{equation}
for some $D \in \Z/q\Z$, all $n \in \{1,\dots,N\}$, and all $m \in \Z \backslash q\Z$.
\end{proposition}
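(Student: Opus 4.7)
The plan is to exhibit a shearing $F'(n,m) \coloneqq B^{-1} F(n, m + A'n + C')$, where $B^{-1} \in \Z/q\Z$ is the multiplicative inverse of $B$ (which exists since $B$ is odd) and $A', C' \in \Z$ are to be determined. Since $F$ agrees with $\Psi$ outside $\{\Psi = 0\}$, the shearing $F'$ will agree with $\tilde\Psi(n,m) \coloneqq B^{-1}\Psi(n, m + A'n + C')$ outside $\{\tilde\Psi = 0\}$. The task is to pick $A', C'$ so that $\tilde\Psi(n,m) = m + D'\frac{q}{4}m(m-n)$ for some $D' \in \Z/q\Z$, and then to check that the zero set of this normal form coincides with $\{(n,m): m \in q\Z\}$.

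The key obstacle is a parity constraint: expanding $\tilde\Psi$ produces an $n^2$-term with coefficient $B^{-1}DA'(A'-1)\frac{q}{4}$, together with cross terms at the $\frac{q}{2}$ scale, and these cannot cancel against the desired $D'\frac{q}{4}m(m-n)$ unless $D$ is even. First observe that the hypothesis does force $D$ to be even. Indeed, restricting $\Psi$ to any line of slope $2$ yields
\[
\Psi(n, 2n+i) = \bigl(A + 2B + 3D\tfrac{q}{4}i\bigr)\,n + (\text{constant in }n) + D\tfrac{q}{2}n^2 \pmod{q};
\]
if $D$ were odd, the $n^2$ contribution equals $\frac{q}{2}(n \bmod 2)$, making $n \mapsto F(n, 2n+i)$ affine on each parity class but with the two pieces differing by $\frac{q}{2}$. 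A direct check using \lemref{stats} shows that the only way such a function can lie in $\Scal[N]$ is to have order exactly $s_0-1$ and its underlying affine function constant on the odd (respectively even) integers, forcing the slope coefficient $A + 2B + 3D\frac{q}{4}i$ to lie in $\{0, \tfrac{q}{2}\} \bmod q$ for \emph{every} $i$. But as $i$ varies, that slope ranges over four distinct residues mod $q$ (via the $3D\frac{q}{4}i$ term, with $3D$ odd), a contradiction. (Alternatively, \lemref{rare} combined with \lemref{digitdensity} bounds the density of order-$(s_0-1)$ lines by $O(1/q)$, whereas the scenario above would require density $1$.)

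With $D = 2D''$ even, the algebra collapses. Every coefficient in $\tilde\Psi$ that involves $D\frac{q}{4}$ together with $A'$ or $C'$ acquires an extra factor of $2$ and hence a factor of $\frac{q}{2}$. Consequently the $n^2$ coefficient vanishes automatically (using that $A'(A'-1)$ is always even), the $mn$ coefficient reduces to exactly $-D'\frac{q}{4}$ with $D' \coloneqq B^{-1}D$, and the pure $m$ coefficient equals $1$. What remains are the conditions on the $n$-coefficient and the constant term, which take the form
\[
A' \equiv -B^{-1}A + B^{-1}D''\tfrac{q}{2}C' \pmod{q}, \qquad C'\bigl(1 + B^{-1}D''\tfrac{q}{2}\bigr) \equiv -B^{-1}C \pmod{q}.
\]
Since $1 + B^{-1}D''\frac{q}{2}$ is odd, it is invertible in $\Z/q\Z$, so the second equation determines $C' \in \Z/q\Z$, and the first then determines $A'$; lifting to integer representatives produces a bona fide shearing.

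It remains to identify the zero set of $\tilde\Psi$. Factoring, $\tilde\Psi(n,m) = m\bigl(1 + D'\frac{q}{4}(m-n)\bigr)$, the parenthesized factor lies in $\{1, 1+\frac{q}{4}, 1+\frac{q}{2}, 1+\frac{3q}{4}\}$, each of which is odd and hence a unit in $\Z/q\Z$ (for $q \geq 4$). Therefore $\tilde\Psi(n,m) \equiv 0 \pmod{q}$ if and only if $m \in q\Z$. Since $F'$ agrees with $\tilde\Psi$ off the zero set of $\tilde\Psi$, we conclude $F'(n,m) = m + D'\frac{q}{4}m(m-n)$ for every $n \in \{1,\dots,N\}$ and every $m \in \Z \setminus q\Z$, which is the required normal form.
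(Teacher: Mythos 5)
The central step of your argument --- that the coefficient $D$ must be even --- is false, and the reasoning behind it contains an error. You claim that when $D$ is odd, the $D\frac{q}{2}n^2$ term in $\Psi(n,2n+i)$ makes $n \mapsto F(n,2n+i)$ fail to be affine, being ``affine on each parity class but with the two pieces differing by $\frac{q}{2}$''. But $\frac{q}{2}n^2 \equiv \frac{q}{2}n \Mod{q}$ (since $n^2 \equiv n \Mod{2}$), so that contribution is itself a \emph{linear} function of $n$ and there is no obstruction at all; the paper makes exactly this observation right after defining pseudo-affine functions, noting they are affine along every infinite non-vertical line precisely because $\frac{q}{2}n^2 - \frac{q}{2}n = q\binom{n}{2} = 0 \Mod{q}$. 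Indeed the normal form \eqref{normal} in the statement allows arbitrary $D$, and Figure \ref{fig:pseudo} exhibits a normal-form function with $D=1$. Since the rest of your computation explicitly invokes $D = 2D''$ to make ``the algebra collapse'' --- to kill the $\frac{q}{2}$-shift in the $m$-coefficient (which in general becomes $B + D(A'+C')\frac{q}{2}$, so dividing by $B^{-1}$ no longer normalizes it to $1$) and to linearize the equation for $C'$ (which for general $D$ contains a $DC'^2\frac{q}{4}$ term and is genuinely quadratic) --- the proof as written does not cover the general case.

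The paper's route avoids coefficient-matching altogether: after normalizing $B=1$ it computes the zero set of $\Psi$ directly, showing it equals $\{(n,m): m = A'n + C' \Mod{q}\}$ for explicit $A',C'$ (the key trick being that $\Psi(n,m)=0$ forces $An+m+C$ to vanish modulo $4$, which lets one evaluate the quadratic term); it then shears by $m \mapsto m + A'n + C'$, checks that the result is again pseudo-affine with zero set $\{m \equiv 0 \Mod{q}\}$, substitutes $m=0$ to conclude the constant and $n$-linear coefficients vanish, and finally divides by the (odd) $m$-coefficient $B''$. If you want to salvage your approach you would need to solve the quadratic congruence for $C'$ (doable, e.g., by first fixing $C' \Mod{4}$, since the quadratic term only depends on $C'$ modulo $4$) and normalize by the inverse of the actual new $m$-coefficient rather than by $B^{-1}$; but the zero-set computation is the cleaner path, and in any case the assertion that $D$ is even must be abandoned.
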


\begin{proof}  
We claim that the zero set of $\Psi$ takes the form
\begin{equation}\label{0Psi}
    \{ (n,m) \in \Z^2: \Psi(n,m) = 0 \} = \{ (n,m) \in \Z^2: m = A' n + C' \Mod{q} \}
\end{equation} 
for some coefficients $A', C' \in \Z/q\Z$.  To see this, we temporarily divide out by the invertible element $B$ to normalize $B=1$.  If $\Psi(n,m)=0$, then $An+m+C=0 \Mod{4}$, hence
$$ 0 = \Psi(n,m) = An + m + C + D \frac{q}{4} (-An-C) (-An-C-n) \Mod{q}$$
which one can write (using $\frac{q}{2} n^2 = \frac{q}{2} n \Mod{q}$) as
$$ 0 = An + m + C + DC^2 \frac{q}{4} + (2A+1) D C \frac{q}{4} n + D \frac{q}{2} \binom{A+1}{2} n \Mod{q}$$
and thus
$$ m = A' n + C' \Mod{q}$$
where $A' \coloneqq - A -  (2A+1) D C \frac{q}{4} - D \frac{q}{2} \binom{A+1}{2}$ and $C' \coloneqq -C - DC^2 \frac{q}{4}$.  Conversely, if $m = A'n+C' \Mod{q}$ then $An+m+C=0 \Mod{4}$ and $\Psi(n,m)=0$.  This gives \eqref{0Psi}.

Let $F''$ denote the shearing
$$F''(n,m) \coloneqq F(n, m + A'n + C')$$
of $F$, and similarly define $\Psi''(n,m) \coloneqq \Psi(n, m + A'n+C')$.  From direct computation, $\Psi''$ is of the form $\Psi''(n,m) = A'' n + B'' m + C'' + D'' \frac{q}{4} m(m-n)$ for some $A'',B'',C'',D'' \in \Z/q\Z$ with $B''$ odd (and thus invertible), and $\Psi''(n,m)$ vanishes when $m = 0\Mod{q}$.  Substituting $m=0$ we conclude that $A''=C''=0$.  If we then set $F'(n,m) \coloneqq F''(n,m)/B''$ we obtain the desired shearing $F'$ in normal form.
\end{proof}

In view of Propositions \ref{is-odd}, \ref{norm}, we see that to conclude the proof of Theorem \ref{main-sudoku}, it suffices to show that all Sudoku solutions $F$ in normal form are non-periodic.  Suppose for contradiction that we had a periodic Sudoku solution $F$ in normal form, thus $F$ is $\langle (0, M)\rangle$-periodic for some period $M$ (i.e., $F(n,m) = F(n,m+M)$ for all $(n,m) \in \mathbb{B}$).  From the normal form condition \eqref{normal} we see that $M$ must be divisible by $q$.  The key proposition we will establish to conclude the argument is:

\begin{proposition}[Tetris iteration]\label{tetris-iterate}  Let $F$ be a Sudoku solution in normal form.  We consider the \emph{Tetris move} of replacing $F$ with the function
$$ F_*(n,m) \coloneqq F(n, qm)$$
which is also a Sudoku solution thanks to Proposition \ref{inv}.  Then there exists a shearing of $F^*$ that is in normal form.
\end{proposition}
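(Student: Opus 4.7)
The plan is to apply \propref{norm} to $F_*$. First, $F_*$ is a Sudoku solution by \propref{inv}(i) applied to the change of variables $(n,m) \mapsto (n, qm)$. To invoke \propref{norm}, I need to exhibit a pseudo-affine function $\Psi_*(n,m) = A_* n + B_* m + C_* + D_*\tfrac{q}{4} m(m-n)$ with $B_* \in \Z/q\Z$ odd that agrees with $F_*$ wherever $\Psi_*$ is nonzero.

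To locate $\Psi_*$, I would compute $F_*(n_0, m_0) = F(n_0, q m_0)$ using the normal form of $F$ along a carefully chosen line through $(n_0, q m_0)$. Since $q m_0 \in q\Z$, the cell $(n_0, qm_0)$ lies in the zero set of $F$'s pseudo-affine $\Psi(n,m) = m + D\tfrac{q}{4} m(m-n)$, so the formula $F=\Psi$ does not apply directly. However, the diagonal $\{(n', n' + q m_0 - n_0) : n' \in \Z\}$ of $F$ passes through $(n_0, qm_0)$ and meets the zero set of $\Psi$ only at $n' \equiv n_0 \Mod{q}$; on its good cells the normal form gives
\[
F(n', n' + q m_0 - n_0) \equiv (n' + q m_0 - n_0)\bigl(1 - D\tfrac{q}{4} n_0\bigr) \Mod{q}.
\]
By \lemref{stats}(iv), \propref{n8}, and equation \eqref{mult-1}, the diagonal is represented as an element of $\Scal[N]$ whose value at the bad cell $n' = n_0$ is
\[
F_*(n_0, m_0) \equiv c_0 \, f_q(m_0) \Mod{q}, \qquad c_0 := 1 - D\tfrac{q}{4} n_0,
\]
with $c_0$ odd (since $D\tfrac{q}{4} n_0$ is even for $q \geq 8$). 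A parallel analysis on the antidiagonal through $(n_0, qm_0)$ corroborates this, and a concatenation argument in the spirit of \lemref{concat} applied on an $8 \times 8$ square of $F_*$ promotes the formula to the pseudo-affine identification: $F_*$ agrees on the complement of the zero set of some $\Psi_*$ whose vertical coefficient $B_*$ is odd (inherited from the oddness of $c_0$). \propref{norm} then yields a shearing of $F_*$ in normal form.

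The principal technical obstacle lies in the bookkeeping of the $\Scal[N]$-representation at bad cells. Along a single line of $F$, the value at a bad cell is not uniquely determined by the mod-$q$ affine behavior on good cells alone: distinct integer representatives $(a, b, c)$ in $g(n') = c\, f_q(a n' + b)$ consistent with the same affine function on good cells can produce different bad-cell values. The Sudoku constraint across multiple lines through each cell of $F_*$ is what rigidifies these representatives, and one must carefully track the $q$-adic higher-order corrections so that the formulas obtained from different lines combine into a single coherent pseudo-affine $\Psi_*$ over the entire board.
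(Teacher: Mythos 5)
Your overall strategy --- read off $F_*$ from the diagonals of $F$, produce a pseudo-affine $\Psi_*$ with odd vertical coefficient, and invoke \propref{norm} --- matches the paper's, but the formula your argument leans on is not derivable, and the difficulty you flag at the end is not resolved the way you suggest. The claim $F_*(n_0,m_0) \equiv c_0 f_q(m_0)$ would require the diagonal of $F$ through $(n_0,qm_0)$ to be represented as exactly $n' \mapsto f_q(\tilde a(n'+i))$; but \lemref{stats}(iv) and \propref{n8} only pin down a representation $f_q(\tilde a n' + \tilde b)$ with $\tilde a,\tilde b$ determined modulo $q$, and the value at the bad cell is $f_q((\tilde a n_0+\tilde b)/q)$, which genuinely depends on the integer representatives (the paper's own example: $f_q(n)$ and $f_q(n+q)$ have identical statistics but different bad-coset values). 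No ``rigidification across multiple lines'' removes this ambiguity, since the normal form constrains $F$ only off $\Z\times q\Z$ and the Sudoku axioms permit genuinely different fillings there. The paper's resolution is to \emph{not} determine $F_*$ pointwise: the computation \eqref{eqs} shows only that along each progression $\{(n_i+qj,\tfrac{n_i+i}{q}+j): 0\le j<q\}$ one has $F_*=f_q(\tilde a_{i,1}j+\tilde c_{i,1})$ for some \emph{unknown} integer $\tilde c_{i,1}$ and odd $\tilde a_{i,1}$, and this weaker statement suffices for everything that follows.

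The second gap is that you never establish the hypothesis needed to run the concatenation machinery on $F_*$. \lemref{concat} applies only on a square all of whose horizontal, diagonal and antidiagonal lines are affine there; locating such a square and then propagating $\Psi_*$ to all of $\mathbb{B}$ via \propref{rigid-out} is precisely \propref{near-digit}, whose hypothesis is weak digit equidistribution of $F_*$. The paper derives this from the $f_q(\tilde a_{i,1}j+\tilde c_{i,1})$ form (odd step forces each digit to occur at most twice among $q$ consecutive values of $j$, then double count using $N=q^2$). Likewise, the oddness of $B_*$ is not simply ``inherited from $c_0$'': it comes from matching $f_q(\tilde a j+\tilde c)$ against $\Psi_*(n_0+qj,m_0+j)$ at two consecutive values of $j$ located by a counting/pigeonhole argument, and subtracting modulo $2$. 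These two steps are the substance of the proof and are missing from your proposal.
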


Indeed, if $F$ is an $\langle (0,M)\rangle$-periodic Sudoku solution in normal form, the post-Tetris move solution $F_*$ will be a $\langle (0,M/q) \rangle$-periodic Sudoku solution, and its shearing will be a $\langle (0,M/q) \rangle$-periodic Sudoku solution in normal form.  Iterating this gives an infinite descent of periods $M$, which is absurd.

\begin{remark} In the computer game ``Tetris'', every time a row is completely filled with blocks, it is deleted.  Analogously to this, a Sudoku solution $F$ in normal form has its values completely specified on all rows $\ell_{m,0} = \{(n,m): 1 \leq n \leq N \}$ with $m \neq 0 \Mod{q}$; deleting all these rows yields the post-Tetris move solution $F_*$.  This may help explain our terminology of a ``Tetris move''.
\end{remark}

\subsection{Analyzing the Tetris move}\label{analysis-sec}

It remains to establish Proposition \ref{tetris-iterate}.  In order to deploy tools such as Proposition \ref{near-digit}, we will need to control upper digits of densities of the post-Tetris solution $F_*$.  To do this, we first analyze the  diagonal lines $F_{i,1}(n) = F(n, n+i)$ of the original solution $F$.  From \eqref{normal} we have
$$ F_{i,1}(n) = n+i + D \frac{q}{4} (n+i)i \Mod{q}$$
whenever $n+i \neq 0 \Mod{q}$.  We can simplify this to
$$ F_{i,1}(n) = a_{i,1} n + b_{i,1}$$
whenever $n+i \neq 0 \Mod{q}$, where the coefficients $a_{i,1}, b_{i,1} \in \Z/q\Z$ are given by the formulae
\begin{equation}\label{aform}
a_{i,1} \coloneqq 1 + D\frac{q}{4} i   \Mod{q}
\end{equation}
and
$$ b_{i,1} \coloneqq i + D \frac{q}{4} i^2 \Mod{q}.$$
Observe that $a_{i,1}$ is odd, and $F_{i,1}(n)$ is equal to $a_{i,1} n + b_{i,1}$ for $n \in \{1,\dots,N\}$ outside of the coset $\Gamma_{F_{i,1}}\coloneqq\{n \in \Z: n+i=0\Mod{q}\}$ of $q\Z$.  By Proposition \ref{n8} (applied to some interval $\{n_0,\dots,n_0+7\}$ in $\{1,\dots,N\}$ avoiding $\Gamma_{F_{i,1}}$), this forces the step $s_{F_{i,1}}$ of $F_{i,1}$ to equal $a_{i,1}$, and the order $\ord_{F_{i,1}}$ to equal $0$.

Thus, by \lemref{stats} (iv),  we may write
$$ F_{i,1}(n) = f_q( \tilde a_{i,1} n + \tilde b_{i,1} )$$
for some integers $\tilde a_{i,1}, \tilde b_{i,1}$ with $\tilde a_{i,1} = a_{i,1} \Mod{q}$ and $\tilde b_{i,1} = b_{i,1} \Mod{q}$.  If we now let $n_i \in \{1,\dots,q\}$ be such that $n_i+i = 0 \Mod{q}$, we conclude in particular that $\tilde a_{i,1} n_i + \tilde b_{i,1} = q \tilde c_{i,1}$ for some integer $\tilde c_{i,1}$, and
\begin{equation}\label{eqs}
\begin{split}
    F_*\left( n_i+qj, \frac{n_i+i}{q} + j \right) &= F_{i,1}( n_i + qj ) \\
    &= f_q( q \tilde c_{i,1} + \tilde a_{i,1} qj ) \\
    &= f_q( \tilde a_{i,1} j + \tilde c_{i,1} ).
\end{split}
\end{equation}
for $j=0,\dots,q-1$.  Since $\tilde a_{i,1}$ is odd, this implies that $F_*(n_i+q_j, \frac{n_i+i}{q}+j) = \tilde a_{i,1} j + \tilde c_{i,1}  \Mod q$ for all but one value of $j$.  In particular each digit $\gamma$ of $\Sigma$ is attained by $F_*(n_i+q_j, \frac{n_i+i}{q}+j)$ at most twice.
Averaging over all $i$ and double counting using $N=q^2$, we conclude that the upper density of $\{ (n,m) \in \mathbb{B}: F_*(n,m) = \gamma \}$ in $\mathbb{B}$ does not exceed the upper density of $E$ by more than $2/q$.  In other words, $F_*$ has weak digit equidistribution.

We may now invoke Proposition \ref{near-digit} and conclude that there exists a pseudo-affine function
$$ \Psi_*(n,m) = A_* n + B_* m + C_* + D_* \frac{q}{4} m(m-n)$$
which is not identically zero in $\mathbb{B}$, and such that $F_*(n,m) = \Psi_*(n,m)$ whenever $\Psi_*(n,m)$ is non-zero.

Since $\mathbb{B}$ can be covered by sets of the form $\{ (n_0 + qj, m_0+j): j=0,\dots,q-1\}$ for $n_0=1,\dots,q$ and $m_0 \in \Z$, we can find $n_0 = 1,\dots,q$ and $m_0 \in \Z$ such that $\Psi_*$ does not vanish identically on this set.  By repeating the calculation \eqref{eqs} (with $i = qm_0 - n_0$) we see that
$$ F_*( n_0 + qj, m_0 + j ) = f_q( \tilde a j + \tilde c )$$
for some integers $\tilde a, \tilde c$ (depending on $n_0,m_0$) with $\tilde a$ odd.  In particular,
\begin{equation}\label{fq}
 f_q( \tilde a j + \tilde c ) = \Psi_*( n_0 + qj, m_0+j)
 \end{equation}
whenever $j=0,\dots,q-1$ is such that the right-hand side is non-zero.  

As $\tilde a$ is odd, the left-hand side of \eqref{fq} attains the value $\frac{q}{2} \Mod q$ at most twice for $j=0,\dots,q-1$.  On the other hand, at the midpoint between consecutive values of $j$ in which the (not identically zero) affine function $\Psi_*( n_0 + qj, m_0+j)$ vanishes, this affine function will attain the value of $\frac{q}{2} \Mod q$.  We conclude that $\Psi_*( n_0 + qj, m_0+j)$ vanishes for at most three values of $j=0,\dots,q-1$; meanwhile $\tilde a j + \tilde c \Mod q$ vanishes for one value of $j$.  Hence by the pigeonhole principle, and the assumption that $q$ is large, the identity
$$ \tilde a l + \tilde c = \Psi_*( n_0 + ql, m_0+l) \Mod q;\quad l=j,j+1$$
holds for two consecutive values $l=j,j+1$ of $l$.  Subtracting these two identities and reducing modulo $2$, we conclude that $B_*$ has the same parity as $\tilde a$ and is thus odd.  Applying Proposition \ref{norm}, we conclude that there exists a shearing of $F_*$ that is in normal form.  This concludes the proof of Proposition \ref{tetris-iterate}, and hence of Theorem \ref{main-sudoku}, Theorem \ref{main}, and Corollary \ref{main-cor}.

   	\section{Open problems}

    We close by posing some problems left open by our work.

    \subsection{Explicit bound on dimension}

    The dimension $d$ produced by our proof of Corollary \ref{main-cor1} is explicit but extremely large and probably not optimal.  This is for a number of reasons, the most significant being that we need an enormous number of functional equations in order to encode the property $P_{\tilde \Omega}$ appearing in Section \ref{sec:sudoku}.  Thus, a natural question is
    
    	\begin{question}\label{ques:explicit-dim}
          What is the smallest value of $d$ for which Corollary \ref{main-cor1} (resp. Corollary \ref{main-cor}) is true?
        \end{question}

        The fact that our construction originates in the virtually two-dimensional space $\Z^2\times G_0$ hints that Conjectures \ref{ptc} and \ref{ptc-cts} might fail in a ``reasonably small'' dimension.
        
        On the other hand, there may be hope to extend the known positive results on the periodic tiling conjecture beyond the known cases.

        \begin{question}\label{ques:three-dim}
          Is the Conjecture \ref{ptc} true in $\Z^3$? Is the Conjecture \ref{ptc-cts} true in $\R^2$? 
        \end{question}

        \subsection{Connected tiles}

        An inspection of our proof of Corollary \ref{main-cor} reveals that the tile $\Sigma \subset \R^d$ constructed by the argument is a finite union of cubes; however, this union need not be connected.  Given the positive results available for connected tiles (and in particular for topological disks \cite{ken,err}), it is natural to ask\footnote{Note added in proof: the first author and Kolountzakis have answered this question in the affirmative in \cite{GK}.} 

        \begin{question}\label{ques:conn}
          Is it possible in Corollary \ref{main-cor} to choose $\Sigma$ to be an open connected set?
        \end{question}

     Of course the question can be trivially answered without the requirement that $\Sigma$ is open, simply by adding suitable measure zero line segments to the tile $\Sigma$ constructed by our arguments. Observe that an aperiodic tiling by translations, rotations and reflections of a convex domain in $\R^3$ was constructed by Schmitt--Conway--Danzer  \cite{schmidt} (aka SCD  biprism).

     \subsection{Cardinality of aperiodic tiles} 

     In view of the results in \cite{szegedy}, it might be interesting to compute the size of our tile $F$ in \corref{main-cor1}.

     \begin{question}\label{ques:size}
Suppose that a finite $F\subset \Z^d$ admits an aperiodic tiling. What is the fewest number of prime factors that the cardinality of $F$ can have? 
     \end{question}

    	\subsection{Decidability of tilings}
    	A famous application of the study of the periodicity of tiling is to the problem of determining whether tilings are \emph{decidable}. Namely, the question\footnote{One can also ask, for an individual tile $F$, whether the existence of a tiling $A \oplus F = G$ is logically decidable (i.e., provable or disprovable) in a first-order theory such as ZFC.  The two questions are closely related; see \cite{GT2} for further discussion.} whether there exists an algorithm that, upon any input of a finite set $F$ in a finitely generated abelian group $G$,  computes (in finite time) if this set is a tile of $G$ or not.
    	A well known argument of H. Wang \cite{wang} shows that if any tile admits a periodic tiling then any tiling problem is decidable.

    	In this work we prove that there are tiles of finitely generated abelian groups which tile aperiodically. However, the decidability of tilings by a single tile remains\footnote{Note added in proof: we have answered this question in the affirmative in \cite{undecidable}.} open.
    	\begin{question}\label{ques:decidability}
          Does there exist any undecidable tiling problem with a single tile?
    	\end{question}
    	In a previous paper \cite{GT2} we proved the  undeciability of tilings of \emph{periodic} sets by \emph{two} tiles. This implies, in particular, the existence of aperiodic tilings by two tiles. Our proof consists of encoding any Wang tiling as a tiling of a periodic set with two tiles; then, the undecidability of Wang tilings \cite{Ber,Ber-thesis} implies the existence of an undecidable tiling problem with only two tiles.

   \subsection{Weak periodicity}

   Let $d$ and $F\subset \Z^d$ be as in \corref{main-cor1}.
   Observe that by our construction, all the sets in $\Tile(F;\Z^d) \coloneqq \{ A \subset \Z^d \colon A \oplus F = \Z^d\}$ are $(d-2)$-periodic in the sense that for every $A\in\Tile(F;\Z^d)$ there exist $d-2$ linearly independent vectors $v_1,\dots,v_{d-2}$ in $\Z^d$ such that $A$ is invariant under translations by $v_j$ for every $j=1,\dots,d-2$.  
   \begin{definition}
       A set $S\subset \Z^d$ is called \emph{$k$-weakly periodic} if it can be partitioned into finitely many sets, each of which is $k$-periodic.
   \end{definition}
   It is not difficult to show that if a tile in $\Z^d$  admits a tiling of $\Z^d$ which is $(d-1)$-weakly periodic then it also admits a tiling which is periodic. Thus, our aperiodic construction contains the largest possible amount of periodicity.
   
   In \cite{GT} we showed that for every $F\subset \Z^2$ all the sets in $\Tile(F;\Z^2)$ are $1$-weakly periodic. This, in particular, implies \conjref{ptc} in $\Z^2$.
   
   The following question remains open.
    \begin{question}
        Let $d\geq 3$ and $F\subset \Z^3$ be finite. Are there any $A\in \Tile(F;\Z^d)$ which are \emph{not} $1$-weakly periodic? 
    \end{question}

      \subsection{The structure of our construction}
    We believe that with additional effort, our analysis should give a complete  classification of the space of Sudoku solutions with good columns, and hence also the set of tilings by the tile $F$ in Theorem \ref{main}, however the answer appears to be somewhat complicated\footnote{In particular, the $D$ coefficient in the pseudo-affine functions \eqref{psid} is somewhat difficult to control.} and we do not give it here.
   \begin{problem}
    Find a complete classification of the space $\Tile(F;\Z^2\times G_0)$, where $F$ and $G_0$ are as in \thmref{main}.  What is the dynamical structure of this space (viewed as a topological dynamical system with the translation action of $\Z^2 \times G_0$)?
   \end{problem}
   
   Following \cite{labbe},  it would be of interest to study the tilings in $\Tile(F;\Z^2\times G_0)$ that have a substitution structure.  
   \begin{question}
    Can any of the tilings by our aperiodic tile  be interpreted as a substitution tiling?
   \end{question}
   The $2$-adic nature of the Sudoku solutions suggests a positive answer.

    \end{document}